 or
\documentclass[10pt,twoside]{siamltex}

\usepackage[english]{babel}
\usepackage{graphicx,epstopdf,epsfig}
\usepackage{amsfonts,epsfig,fancyhdr,graphics, hyperref,amsmath,amssymb}

\setlength{\textheight}{210mm} \setlength{\textwidth}{165mm}
\topmargin = -10mm

\setlength{\parskip}{.1in}



\newcommand{\Names}{Patricia Mariela Morillas}
\newcommand{\Title}{Expressions and characterizations for the Moore--Penrose inverse}

\newtheorem{remark}[theorem]{Remark}
\newtheorem{example}[theorem]{Example}


\newcommand{\norm}[1]{\left\Vert#1\right\Vert}

\newcommand{\set}[1]{\left\{#1\right\}}
\newcommand{\paren}[1]{\left(#1\right)}

\newcommand{\di}{\displaystyle}



\begin{document}

\bibliographystyle{plain}

\setcounter{page}{1}

\thispagestyle{empty}

 \title{Expressions and characterizations for the\\ Moore--Penrose inverse of operators and matrices}

\author{
Patricia Mariela Morillas\thanks{Instituto de Matem\'{a}tica
Aplicada San Luis (UNSL-CONICET), Ej\'{e}rcito de los Andes 950,
5700 San Luis, Argentina (pmmorillas@gmail.com). Supported by Grants
PIP 112-201501-00589-CO (CONICET) and PROIPRO 03-1420 (UNSL).}}

\markboth{\Names}{\Title}

\maketitle

\begin{abstract}
Under certain conditions, we prove that the Moore--Penrose inverse
of a sum of operators is the sum of the Moore--Penrose inverses.
From this, we derive expressions and characterizations for the
Moore--Penrose inverse of an operator that are useful for its
computation. We give formulations of them for finite matrices and
study the Moore--Penrose inverse of circulant matrices and of
distance matrices of certain graphs.
\end{abstract}

\begin{keywords}
Hilbert space, Moore--Penrose inverse, Circulant matrix, Distance
matrix, Weighted tree, Wheel graph.
\end{keywords}
\begin{AMS}
47A05, 47B02, 15A09, 15B05, 05C50.
\end{AMS}



\section{Introduction}\label{S Introduction}

A generalization of the concept of inverse for matrices was first
introduced by Moore \cite{Moore (1920), Moore-Barnard (1935)}. Then,
this generalized inverse was independently reintroduced and studied
by Bjerhammer \cite{Bjerhammar (1951)} and Penrose \cite{Penrose
(1955)}. The now commonly called \emph{Moore--Penrose inverse} has
been also defined and studied for operators in Hilbert spaces (see
e.g., \cite{Petryshyn (1967), Ding-Huang (1994)} and the references
therein). It has numerous applications to physics, statistics,
optimization theory, solution of differential and integral
equations, prediction theory, control system analysis, etc. For more
details see, e.g., \cite{Ben-Israel-Greville (2003), Campbell-Meyer
(2009), Wang-Wei-Qiao (2018), Baksalary-Trenkler (2021)}.

Let $\mathcal{H}_{1}$ and $\mathcal{H}_{2}$ be Hilbert spaces over
$\mathbb{F}=\mathbb{R}$ or $\mathbb{F}=\mathbb{C}$. Let
$\mathcal{B}(\mathcal{H}_{1}, \mathcal{H}_{2})$ denotes the set of
bounded linear operators from $\mathcal{H}_{1}$ to
$\mathcal{H}_{2}$. The set of elements in
$\mathcal{B}\!\paren{\mathcal{H}_{1}, \mathcal{H}_{2}}$ with closed
range will be denoted with $\mathcal{BC}\!\paren{\mathcal{H}_{1},
\mathcal{H}_{2}}$. Given $A \in \mathcal{B}(\mathcal{H}_{1},
\mathcal{H}_{2})$, we denote the adjoint, the null space, and the
range of $A$ by $A^{*}$, ${\rm N}(A)$ and ${\rm R}(A)$,
respectively. If $A \in \mathcal{BC}\!\paren{\mathcal{H}_{1},
\mathcal{H}_{2}}$, then $\mathcal{H}_{1}={\rm N}\!\paren{A} \oplus
{\rm R}\!\paren{A^{*}}$ and $\mathcal{H}_{2}={\rm R}\!\paren{A}
\oplus {\rm N}\!\paren{A^{*}}$, where $\oplus$ denotes orthogonal
sum. If $\mathcal{W}$ is a closed subspace of a Hilbert space, then
$P_{\mathcal{W}}$ denotes the orthogonal projection onto
$\mathcal{W}$.

\begin{definition}\label{D MPI}
{\rm Let $A \in \mathcal{BC}\!\paren{\mathcal{H}_{1},
\mathcal{H}_{2}}$. The unique solution $X \in
\mathcal{BC}\!\paren{\mathcal{H}_{2}, \mathcal{H}_{1}}$ of the
system of operator equations
\[(1)\, AXA = A, \,\,(2)\, XAX = X, \,\,(3)\,
\paren{AX}^{*} = AX \text{ and } \,\,(4)\,\paren{XA}^{*} =
  XA,\]
is called the \emph{Moore--Penrose inverse} of $A$ and is denoted by
$A^{\dag}$. Any $X \in \mathcal{B}\!\paren{\mathcal{H}_{2},
\mathcal{H}_{1}}$ that satisfies (1), (3) and (4) is known as a
\emph{$\{1, 3, 4\}$-inverse} of $A$.}
\end{definition}
One of the most important properties used in the applications of the
Moore--Penrose inverse is that the \emph{minimal norm least squares
problem}

\centerline{$\di{\min\norm{x}}$ subject to $\di{\norm{Ax-g}=\min_{f
\in \mathcal{H}_{1}}\norm{Af-g}}$,}

\noindent has the unique solution $x=A^{\dag}g$.

We note that if $A \in \mathcal{B}(\mathcal{H}_{1},
\mathcal{H}_{2})$, then $A^{\dag}$ exists if and only if ${\rm
R}\!\paren{A}$ is closed, and ${\rm R}\!\paren{A}$ is closed if and
only if ${\rm R}\!\paren{A^{*}}$ is closed. Let $A \in
\mathcal{BC}\!\paren{\mathcal{H}_{1}, \mathcal{H}_{2}}$. Then
$\paren{A^{\dag}}^{\dag}=A$ and
$\paren{A^{*}}^{\dag}=\paren{A^{\dag}}^{*}$. If $A$ is invertible,
then $A^{\dag}=A^{-1}$. The Moore--Penrose inverse is characterized
in the following theorem (see \cite[Theorem]{Petryshyn (1967)} or
\cite[Theorem 9.3]{Ben-Israel-Greville (2003)}):

\begin{theorem}\label{T MPI caracterizacion}
If $A \in \mathcal{BC}\!\paren{\mathcal{H}_{1}, \mathcal{H}_{2}}$,
then $A^{\dag} \in \mathcal{BC}\!\paren{\mathcal{H}_{2},
\mathcal{H}_{1}}$ is the unique solution of anyone of the following
equivalent systems:
\begin{enumerate}
  \item[(i)] $AX = P_{{\rm R}\!\paren{A}}$, ${\rm N}\!\paren{X^{*}} = {\rm
  N}\!\paren{A}$.
  \item[(ii)] $AX = P_{{\rm R}\!\paren{A}}$, $XA = P_{{\rm R}\!\paren{A^{*}}}$, $XAX =
  X$.
  \item[(iii)] $XAA^{*} = A^{*}$, $XX ^{*} A^{*} = X$.
  \item[(iv)] $XAP_{{\rm R}\!\paren{A^{*}}} = P_{{\rm
  R}\!\paren{A^{*}}}$, $XP_{{\rm N}\!\paren{A^{*}}} = 0$.
  \item[(v)] $XA = P_{{\rm R}\!\paren{A^{*}}}$, ${\rm N}\!\paren{X} = {\rm
  N}\!\paren{A^{*}}$.
  \item[(vi)] $AX = P_{{\rm R}\!\paren{A}}$, $XA = P_{{\rm R}\!\paren{X}}$.
\end{enumerate}
\end{theorem}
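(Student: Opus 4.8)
The plan is to prove, for each of the six systems, two separate things: that $X=A^{\dag}$ is a solution, and that the solution is unique; the asserted equivalence of the systems is then automatic, since all of them single out one and the same operator. First I would assemble the basic toolbox from Definition~\ref{D MPI} and the orthogonal decompositions recalled in the Introduction. From equations (1)--(4) one extracts $AA^{\dag}=P_{{\rm R}(A)}$ and $A^{\dag}A=P_{{\rm R}(A^{*})}$, the absorption identities $AP_{{\rm R}(A^{*})}=A$, $P_{{\rm R}(A)}A=A$, $A^{\dag}P_{{\rm R}(A)}=A^{\dag}$, $P_{{\rm R}(A^{*})}A^{\dag}=A^{\dag}$, together with ${\rm R}(A^{\dag})={\rm R}(A^{*})$ and ${\rm N}(A^{\dag})={\rm N}(A^{*})$. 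I would also record the general fact ${\rm N}(B^{*})=\overline{{\rm R}(B)}^{\perp}$ and ${\rm N}(A)={\rm R}(A^{*})^{\perp}$, and, using that $A$ has closed range, ${\rm R}(AA^{*})={\rm R}(A)$.

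Existence is then a direct substitution using this toolbox. For instance, in (i) the equation $AA^{\dag}=P_{{\rm R}(A)}$ is immediate and ${\rm N}((A^{\dag})^{*})={\rm R}(A^{\dag})^{\perp}={\rm R}(A^{*})^{\perp}={\rm N}(A)$; in (iii) one has $A^{\dag}AA^{*}=P_{{\rm R}(A^{*})}A^{*}=A^{*}$ and $A^{\dag}(A^{\dag})^{*}A^{*}=A^{\dag}(AA^{\dag})^{*}=A^{\dag}AA^{\dag}=A^{\dag}$; the remaining cases are checked the same way. Two simplifications are worth isolating at this stage, as they are reused in the uniqueness part: in (iv) the identity $AP_{{\rm R}(A^{*})}=A$ collapses the first equation to $XA=P_{{\rm R}(A^{*})}$, and in (vi) the relation ${\rm R}(A^{\dag})={\rm R}(A^{*})$ turns $P_{{\rm R}(X)}$ into $P_{{\rm R}(A^{*})}$ once one knows $\overline{{\rm R}(X)}={\rm R}(A^{*})$.

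For uniqueness I would use a single template for (ii)--(v), based on the decomposition $\mathcal{H}_{2}={\rm R}(A)\oplus {\rm N}(A^{*})$: show that any solution $X$ agrees with $A^{\dag}$ on ${\rm R}(A)$ and that both $X$ and $A^{\dag}$ vanish on ${\rm N}(A^{*})$, whence $X=A^{\dag}$. Agreement on ${\rm R}(A)$ is produced by the equation that multiplies $X$ by $A$ on the right, reduced to $XA=P_{{\rm R}(A^{*})}=A^{\dag}A$ (in (iv) after using $AP_{{\rm R}(A^{*})}=A$; in (iii) from $XAA^{*}=A^{*}$ together with ${\rm R}(AA^{*})={\rm R}(A)$): since ${\rm R}(A)$ is closed, every element of it is some $Aw$, and $X(Aw)=A^{\dag}A\,w=A^{\dag}(Aw)$. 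Vanishing on ${\rm N}(A^{*})$ comes from the second condition: in (v) from ${\rm N}(X)={\rm N}(A^{*})$ and in (iv) from $XP_{{\rm N}(A^{*})}=0$ directly; in (ii) from $X=X(AX)=XP_{{\rm R}(A)}$; and in (iii) by taking adjoints in $XX^{*}A^{*}=X$ to obtain $X^{*}=AXX^{*}$, so that $\overline{{\rm R}(X^{*})}\subseteq {\rm R}(A)$ and hence ${\rm N}(A^{*})\subseteq {\rm N}(X)$. System (i) is handled dually in $\mathcal{H}_{1}$: $AX=P_{{\rm R}(A)}=AA^{\dag}$ gives ${\rm R}(X-A^{\dag})\subseteq {\rm N}(A)$, while ${\rm N}(X^{*})={\rm N}(A)$ forces $\overline{{\rm R}(X)}={\rm R}(A^{*})={\rm N}(A)^{\perp}$, so ${\rm R}(X-A^{\dag})\subseteq {\rm N}(A)\cap {\rm N}(A)^{\perp}=\{0\}$. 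The quickest case is (vi): since $XA=P_{{\rm R}(X)}$ is an orthogonal projection onto $\overline{{\rm R}(X)}$, it is self-adjoint and fixes ${\rm R}(X)$, giving $XAX=X$ and $(XA)^{*}=XA$, while $AX=P_{{\rm R}(A)}$ gives $AXA=A$ and $(AX)^{*}=AX$; thus $X$ satisfies (1)--(4) and the uniqueness in Definition~\ref{D MPI} yields $X=A^{\dag}$.

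The main obstacle I anticipate is the bookkeeping with the non-linear conditions—${\rm N}(X^{*})={\rm N}(A)$, ${\rm N}(X)={\rm N}(A^{*})$, and the self-referential $P_{{\rm R}(X)}$—which must be converted into the clean statements ``$X$ vanishes on ${\rm N}(A^{*})$'' or ``$\overline{{\rm R}(X)}={\rm R}(A^{*})$'' before the template applies. Every such conversion rests on the identity ${\rm N}(B^{*})=\overline{{\rm R}(B)}^{\perp}$ and on the closedness of ${\rm R}(A)$ and ${\rm R}(A^{*})$, which is what guarantees both that orthogonal complements reclose correctly and that range vectors are genuine images $Aw$; this is precisely the place where the hypothesis $A\in\mathcal{BC}(\mathcal{H}_{1},\mathcal{H}_{2})$ is indispensable.
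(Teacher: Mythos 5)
Your proposal is correct, but there is nothing in the paper to compare it against: the paper states Theorem~\ref{T MPI caracterizacion} as a known result and simply cites Petryshyn and Ben-Israel--Greville (Theorem 9.3) for its proof, so your argument is a genuine, self-contained addition rather than a variant of an internal proof. Your two-part scheme is sound: existence follows by substitution from the standard toolbox ($AA^{\dag}=P_{{\rm R}(A)}$, $A^{\dag}A=P_{{\rm R}(A^{*})}$, ${\rm R}(A^{\dag})={\rm R}(A^{*})$, ${\rm N}(A^{\dag})={\rm N}(A^{*})$), and the uniqueness template --- any solution agrees with $A^{\dag}$ on ${\rm R}(A)$ and vanishes on ${\rm N}(A^{*})$, then invoke $\mathcal{H}_{2}={\rm R}(A)\oplus{\rm N}(A^{*})$ --- handles (ii)--(v) uniformly; I checked the delicate conversions and they hold: in (iii), ${\rm R}(AA^{*})={\rm R}(A)$ (valid because ${\rm R}(A^{*})$ is closed, so every $Af$ with $f\in{\rm R}(A^{*})$ is $AA^{*}h$) gives agreement on ${\rm R}(A)$, and the adjoint identity $X^{*}=AXX^{*}$ gives ${\rm N}(A^{*})\subseteq{\rm N}(X)$; in (i), the dual argument ${\rm R}(X-A^{\dag})\subseteq{\rm N}(A)\cap{\rm N}(A)^{\perp}=\{0\}$ is right; and reducing (vi) to the four Penrose equations and quoting the uniqueness in Definition~\ref{D MPI} is the cleanest possible treatment of the self-referential condition $XA=P_{{\rm R}(X)}$ (your remark that the projection fixes ${\rm R}(X)$, closed or not, is exactly the point needed). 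This is essentially the classical textbook route the cited references take, so while the proof is correct and complete, it buys the paper only self-containedness, not new content.
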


\subsection{Our contributions and the organization of the paper}

In Section~\ref{S MPI of sums of operators}, we prove that under
certain conditions, the Moore--Penrose inverse of a sum of operators
is the sum of the Moore--Penrose inverses (Theorem~\ref{T MPI
expression sum}). From this, we give a result for $\{1, 3,
4\}$-inverses (Corollary~\ref{C 134 inverse}) and derive expressions
and characterizations for the Moore--Penrose inverse that are useful
for its computation (Theorem~\ref{T MPI expression sum injective},
Corollary~\ref{C MPI expression sum injective}, Theorem~\ref{T MPI
expression sum surjective} and Corollary~\ref{C MPI expression sum
surjective}). Part of them can be viewed as extensions of the
following well-known result:
\begin{proposition}\label{P MPI usando adjunta}
Let $A \in \mathcal{BC}\!\paren{\mathcal{H}_{1}, \mathcal{H}_{2}}$.
Then:
\begin{enumerate}
  \item[(i)] $A^{\dag}=\paren{A^{*}A}^{\dag}A^{*}=A^{*}\paren{AA^{*}}^{\dag}$.
  \item[(ii)] If ${\rm N}\!\paren{A}=\set{0}$, then $A^{*}A$ is invertible and
$A^{\dag}=\paren{A^{*}A}^{-1}A^{*}$.
  \item[(iii)] If ${\rm R}\!\paren{A}=\mathcal{H}_{2}$,
then $AA^{*}$ is invertible and $A^{\dag}=A^{*}\paren{AA^{*}}^{-1}$.
\end{enumerate}
\end{proposition}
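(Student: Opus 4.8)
The plan is to prove (i) by verifying directly that $X := \paren{A^{*}A}^{\dag}A^{*}$ satisfies the four defining equations of Definition~\ref{D MPI}, and then to deduce the twin formula together with parts (ii) and (iii) as consequences. Before starting, I would record three auxiliary facts. First, ${\rm N}\!\paren{A^{*}A}={\rm N}\!\paren{A}$: the inclusion $\supseteq$ is immediate, while $A^{*}Ax=0$ gives $\norm{Ax}^{2}=\prode{A^{*}Ax,x}=0$, hence $Ax=0$. Second, $A^{*}A$ has closed range, so that $\paren{A^{*}A}^{\dag}$ exists; combining this with the first fact and the self-adjointness of $A^{*}A$ yields ${\rm R}\!\paren{A^{*}A}={\rm N}\!\paren{A^{*}A}^{\perp}={\rm N}\!\paren{A}^{\perp}={\rm R}\!\paren{A^{*}}$. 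Third, $A\,P_{{\rm R}\!\paren{A^{*}}}=A$, because $\mathcal{H}_{1}={\rm N}\!\paren{A}\oplus{\rm R}\!\paren{A^{*}}$ and $A$ annihilates ${\rm N}\!\paren{A}$.

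Writing $B:=A^{*}A$, which is self-adjoint so that $\paren{B^{\dag}}^{*}=\paren{B^{*}}^{\dag}=B^{\dag}$ and $B^{\dag}B=P_{{\rm R}\!\paren{B}}=P_{{\rm R}\!\paren{A^{*}}}$, the four equations fall out quickly. For (4), $XA=B^{\dag}A^{*}A=B^{\dag}B=P_{{\rm R}\!\paren{A^{*}}}$ is an orthogonal projection, hence self-adjoint. For (1), $AXA=A\,B^{\dag}B=A\,P_{{\rm R}\!\paren{A^{*}}}=A$ by the third fact. For (3), $AX=AB^{\dag}A^{*}$ is self-adjoint since $\paren{AB^{\dag}A^{*}}^{*}=A\paren{B^{\dag}}^{*}A^{*}=AB^{\dag}A^{*}$. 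Finally, for (2), using ${\rm R}\!\paren{B^{\dag}}={\rm R}\!\paren{B^{*}}={\rm R}\!\paren{A^{*}}$ we get $P_{{\rm R}\!\paren{A^{*}}}B^{\dag}A^{*}=B^{\dag}A^{*}$, so $XAX=\paren{XA}X=P_{{\rm R}\!\paren{A^{*}}}B^{\dag}A^{*}=X$. By the uniqueness in Definition~\ref{D MPI}, $X=A^{\dag}$.

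For the second identity in (i), I would apply the formula just proved with $A^{*}$ in place of $A$, obtaining $\paren{A^{*}}^{\dag}=\paren{AA^{*}}^{\dag}A$; taking adjoints and using $\paren{A^{*}}^{\dag}=\paren{A^{\dag}}^{*}$ together with the self-adjointness of $AA^{*}$ gives $A^{\dag}=A^{*}\paren{AA^{*}}^{\dag}$. Parts (ii) and (iii) then reduce to identifying when the Moore--Penrose inverses above become ordinary inverses. If ${\rm N}\!\paren{A}=\set{0}$, then ${\rm R}\!\paren{A^{*}}=\mathcal{H}_{1}$, so by the second auxiliary fact $A^{*}A$ is a bounded bijection of $\mathcal{H}_{1}$ (injective since ${\rm N}\!\paren{A^{*}A}={\rm N}\!\paren{A}=\set{0}$, surjective since ${\rm R}\!\paren{A^{*}A}=\mathcal{H}_{1}$), hence invertible by the bounded inverse theorem; thus $\paren{A^{*}A}^{\dag}=\paren{A^{*}A}^{-1}$ and (ii) follows from (i). Part (iii) is symmetric, applying the same reasoning to $AA^{*}$ via ${\rm R}\!\paren{A}=\mathcal{H}_{2}\Rightarrow{\rm N}\!\paren{A^{*}}=\set{0}$, or simply by invoking (ii) for $A^{*}$ and taking adjoints.

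The only non-formal point is the closed-range claim for $A^{*}A$, which underpins both the existence of $\paren{A^{*}A}^{\dag}$ and the identity ${\rm R}\!\paren{A^{*}A}={\rm R}\!\paren{A^{*}}$; everything else is a routine manipulation of the four equations and the orthogonal decompositions. I expect this to be the main obstacle, since in the infinite-dimensional setting closedness of the range does not follow automatically and must be extracted from the hypothesis $A\in\mathcal{BC}\!\paren{\mathcal{H}_{1},\mathcal{H}_{2}}$ through the equivalence of closedness of ${\rm R}\!\paren{A}$, ${\rm R}\!\paren{A^{*}}$ and ${\rm R}\!\paren{A^{*}A}$, the first two of which are already noted in the Introduction.
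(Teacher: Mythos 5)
Your proof is correct, but there is nothing in the paper to compare it against: Proposition~\ref{P MPI usando adjunta} is stated in Section~\ref{S Introduction} as a well-known result and the paper never proves it, deferring implicitly to the standard literature. Your route---direct verification of the four Penrose equations for $X=\paren{A^{*}A}^{\dag}A^{*}$, the twin formula by passing to $A^{*}$ and taking adjoints, and then (ii) and (iii) by identifying when the Moore--Penrose inverses become ordinary inverses---is exactly the standard one, and each algebraic step you perform (the identification $XA=P_{{\rm R}\!\paren{A^{*}}}$, the use of ${\rm R}\!\paren{B^{\dag}}={\rm R}\!\paren{A^{*}}$ for equation (2), the bounded inverse theorem in (ii)) is valid.

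The single point you leave open, closedness of ${\rm R}\!\paren{A^{*}A}$, should not be left as an imported equivalence, because it can be settled in two lines in a way that also yields your second auxiliary fact directly. Since ${\rm R}\!\paren{A}$ is closed, the decomposition $\mathcal{H}_{2}={\rm R}\!\paren{A}\oplus{\rm N}\!\paren{A^{*}}$ recorded in the Introduction applies, so any $x\in\mathcal{H}_{2}$ can be written $x=Aw+v$ with $v\in{\rm N}\!\paren{A^{*}}$, whence $A^{*}x=A^{*}Aw$. This gives ${\rm R}\!\paren{A^{*}}\subseteq{\rm R}\!\paren{A^{*}A}$; the reverse inclusion is trivial, so ${\rm R}\!\paren{A^{*}A}={\rm R}\!\paren{A^{*}}$, and the latter is closed because ${\rm R}\!\paren{A}$ is (the equivalence the paper does quote). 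Hence $\paren{A^{*}A}^{\dag}$ exists and ${\rm R}\!\paren{A^{*}A}={\rm R}\!\paren{A^{*}}$, which is everything your verification needs; note that the range identity here is proved first and closedness falls out, rather than closedness being a prerequisite as in your ordering. One last cosmetic remark: when you invoke uniqueness from Definition~\ref{D MPI}, that definition quantifies over $X\in\mathcal{BC}\!\paren{\mathcal{H}_{2},\mathcal{H}_{1}}$, so strictly you should observe either that solutions of the four equations are unique among all bounded operators (a purely algebraic fact), or that your $X$ has closed range, which is immediate since ${\rm R}\!\paren{X}\subseteq{\rm R}\!\paren{\paren{A^{*}A}^{\dag}}={\rm R}\!\paren{A^{*}}$ while $XA=P_{{\rm R}\!\paren{A^{*}}}$ forces the reverse inclusion, so ${\rm R}\!\paren{X}={\rm R}\!\paren{A^{*}}$. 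With these insertions your argument is fully self-contained.
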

In particular, we note that in Definition~\ref{D MPI} and in
Theorem~\ref{T MPI caracterizacion}, $A^{\dag}$ is characterized as
the solution of systems of equations, or of operators equations
where the solution must satisfy some restrictions on its null space
or on its range space. The importance of Proposition~\ref{P MPI
usando adjunta}(ii)(iii) from the computational point of view is
that if $A \in \mathcal{BC}\!\paren{\mathcal{H}_{1},
\mathcal{H}_{2}}$ is injective (surjective), then $A^{\dag}$ is the
unique solution of the single equation $\paren{A^{*}A}X=A^{*}$
(resp. $X\paren{AA^{*}}=A^{*}$). Our results permit to address the
computation of $A^{\dag}$ in a similar manner as a solution of a
single equation, in cases in which $A$ is not necessarily injective
or surjective.

Theorem~\ref{T MPI expression sum}(iv) extends \cite[Lemma
1.7]{Penrose (1955)} about the Moore--Penrose inverse of sums of
matrices to sums of operators with closed ranges. In Section~\ref{S
Formulations for matrices}, we consider other generalizations of
\cite[Lemma 1.7]{Penrose (1955)} given in \cite{Cline (1965),
Hung-Markham (1977), Fill-Fishkind (1999)} for sums of two matrices
and in \cite{Arias-Corach-Maestripieri (1999)} for sums of two
operators. We also mention other approaches to the study of the
Moore--Penrose inverse of sums of matrices that appear in
\cite{Sivakumar (2020), Baksalary-Sivakumar-Trenkler (2022), Tian
(1998), Tian (2001), Tian (2005)}. Then, we give formulations of
results of Section~\ref{S MPI of sums of operators} for finite
matrices (Theorem~\ref{T MPI expression}, Corollary~\ref{C MPI
expression full rank} and Theorem~\ref{T MPI expression full rank
comp1}). These formulations provide methods for the obtention of the
Moore--Penrose inverse in the finite-dimensional case. We show that
results appeared in \cite{Ben-Israel-Greville (2003), Bell (1981),
Plonka-Hoffmanny-Weickertz (2016)}, which were proved with different
approaches, are particular cases of Corollary~\ref{C MPI expression
full rank}(iii) (see Remark~\ref{R results literature}). We also
show the relation of Theorem~\ref{T MPI expression sum} with the
singular value decomposition (Remarks~\ref{R Theorem 3.2 Sivakumar
(2020) conditions not necessary} and \ref{R Theorem T MPI expression
sum conditions not necessary}).

In Sections~\ref{S MPI of circulant matrices} and \ref{S MPI
distance matrix graphs}, we use our previous results to determine
closed-form expressions for the entries of the Moore--Penrose
inverse of circulant matrices and of distance matrices of certain
graphs.

Circulant matrices arise in various areas of applied mathematics and
science, such as statistics, physics, signal processing, and coding
theory, among many others (see, e.g., \cite{Davis (1994), Aldrovandi
(2001), Gray (2001)}). Beside of its theoretical interest, having an
explicit expression for the (Moore--Penrose) inverses of circulant
matrices can reduce the computational cost in dealing with them in
some applications. There are several papers that give expressions
for the (Moore--Penrose) inverse of circulant matrices, see e.g.
\cite{Searle (1979), Bell (1981), Fuyong (2011),
Carmona-Encinas-Gago-Jimenez-Mitjana (2015), Bustomi-Barra (2017),
Carmona-Encinas-Jimenez-Mitjana (2021),
Carmona-Encinas-Jimenez-Mitjana (2022)}. In Section~\ref{S MPI of
circulant matrices}, we consider this type of matrices finding the
explicit expressions of the (Moore--Penrose) inverse of some
families of complex circulant matrices (Lemma~\ref{L circ 2,0,1,
...,1}, Proposition~\ref{P MPI circulant dos}, Example~\ref{Ex MPI
circulant mas menos} and Proposition~\ref{P MPI circulant abbb}).
Based on spectral properties of circulant matrices, Theorem~\ref{T
MPI expression sum circulant} provides a way to obtain the
(Moore--Penrose) inverse of circulant matrices in terms of the
(Moore--Penrose) inverses of other circulant matrices. As a
consequence, we get Proposition~\ref{P MPI circulant suma cero} that
shows that we can easily have an explicit expression for the
Moore--Penrose inverse of a circulant matrix for the case in which
the generating vector has components with nonzero-sum from the case
in which the generating vector has components with zero-sum, and
vice versa. Hence, there is no need to separately consider the two
cases. To our best knowledge, this fact was not noted so far in the
literature. It will simplify the study and the application of the
Moore--Penrose inverse of circulant matrices in future works.

Distance matrices of connected graphs have several interesting
properties and have applications in, e.g., chemistry, biology, and
data communication \cite{Bapat (2018), Fiedler (2011)}. In
\cite{Graham-Lovasz (1978)}, Graham and Lov\'{a}sz express the
inverse of the distance matrix of an unweighted tree in terms of the
Laplacian matrix. Since then, there was interest in give expressions
for the (Moore--Penrose) inverse of distance matrices of graphs
using the Laplacian matrix of the graph or a generalization of it
(see e.g. \cite{Bapat-Kirkland-Neumann (2005), Kurata-Bapat (2016),
Balaji-Bapat-Goel (2020), Balaji-Bapat-Goel (2021)}). Section~\ref{S
MPI distance matrix graphs} is devoted to find explicit expression
of the Moore--Penrose inverse of distance matrices $D$ of weighted
trees and of wheel graphs with an odd number of vertices. Our
expressions for $D^{\dag}$ do not involve the Laplacian matrix or a
generalization of it but an invertible matrix which is a $\{1, 3,
4\}$-inverse of $D$ constructed directly from $D$ and its null space
(see (\ref{E MPI D wtree 1}), (\ref{E MPI D wtree 2}) and
Theorem~\ref{T MPI dmwgo}). These expressions give alternatives to
the ones presented in \cite{Kurata-Bapat (2016), Balaji-Bapat-Goel
(2020)} for the computation of $D^{\dag}$.

In Section~\ref{SubS MPI distance matrix weighted tree}, we consider
distance matrices $D$ of weighted trees with all the weights being
nonzero and with sum equal to zero. We note that the expression of
$D^{\dag}$ given in \cite[Theorem 11]{Kurata-Bapat (2016)} can be
viewed as an extension of \cite[Theorems 3 and 4]{Kurata-Bapat
(2015)} for Euclidean distance matrices and the well-known formula
due to Graham and Lov\'{a}sz \cite{Graham-Lovasz (1978)}. In the
expression for $D^{\dag}$ provided in \cite[Theorem 11]{Kurata-Bapat
(2016)} appears a vector that depends on $D^{\dag}$. We show how our
expressions (\ref{E MPI D wtree 1}) or (\ref{E MPI D wtree 2}) can
be used to compute this vector without using $D^{\dag}$. Moreover,
for certain types of weighted trees, we give an explicit expression
of this vector using only the Laplacian matrix and the degree vector
of the tree (Proposition~\ref{P MPI D wtree u Ldelta}).

In Section~\ref{SubS MPI dmwgo}, we deal with the Moore--Penrose
inverse of distance matrices $D$ of wheel graphs with an odd number
of vertices. In \cite{Balaji-Bapat-Goel (2020)}, an explicit
expression of $D^{\dag}$ is given in terms of a generalized
Laplacian matrix extending in this way the classical result of
Graham and Lov\'{a}sz \cite{Graham-Lovasz (1978)}. Here, we give a
closed-form expression for each entry of the inverse of a $\{1, 3,
4\}$-inverse of $D$, and thus of $D^{\dag}$. The principal result is
Theorem~\ref{T MPI dmwgo}. Lemmas~\ref{L MPI dmwgo z1} and \ref{L
MPI dmwgo z2}, used to prove Theorem~\ref{T MPI dmwgo}, are about
these entries and describe properties of them. In Proposition~\ref{P
MPI dmwgo properties 123D}, we give some properties of the inverse
of the $\{1, 3, 4\}$-inverse of $D$ which is used in our expression
of $D^{\dag}$ and of the generalized Laplacian matrix introduced in
\cite{Balaji-Bapat-Goel (2020)}.

\section{Moore--Penrose inverses of sums of operators}\label{S MPI of sums of operators}

The following theorem gives conditions under which the
Moore--Penrose inverse of a sum is the sum of the Moore--Penrose
inverses.

\begin{theorem}\label{T MPI expression sum}
Let $\set{A_{k}}_{k=1}^{K} \subset
\mathcal{BC}\!\paren{\mathcal{H}_{1}, \mathcal{H}_{2}}$. Assume that
${\rm R}\!\paren{A_{k}} \subseteq {\rm N}\!\paren{A_{k'}^{*}}$ and
${\rm R}\!\paren{A_{k}^{*}} \subseteq {\rm N}\!\paren{A_{k'}}$ for
each $k, k' = 1, \ldots, K$, $k \neq k'$. Then:
\begin{enumerate}
  \item[(i)] $A_{k}A_{k'}^{\dag}=0$ and $A_{k}^{\dag}A_{k'}=0$ for each  $k, k' = 1,
\ldots, K$, $k \neq k'$.
  \item[(ii)] ${\rm R}\!\paren{\sum_{k=1}^{K}A_{k}} = \bigoplus_{k=1}^{K}{\rm R}\!\paren{A_{k}}$ and ${\rm R}\!\paren{\sum_{k=1}^{K}A_{k}^{*}}
= \bigoplus_{k=1}^{K}{\rm R}\!\paren{A_{k}^{*}}$.
  \item[(iii)] ${\rm N}\!\paren{\sum_{k=1}^{K}A_{k}} = \bigcap_{k=1}^{K} {\rm N}\!\paren{A_{k}}$ and ${\rm N}\!\paren{\sum_{k=1}^{K}A_{k}^{*}} = \bigcap_{k=1}^{K} {\rm N}\!\paren{A_{k}^{*}}$.
  \item[(iv)] $\paren{\sum_{k=1}^{K}A_{k}}^{\dag}$ exists and $\paren{\sum_{k=1}^{K}A_{k}}^{\dag}=\sum_{k=1}^{K}A_{k}^{\dag}$.
\end{enumerate}
\end{theorem}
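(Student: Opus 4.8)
The plan is to establish the four parts in order, since each feeds into the next. The geometric hypothesis says that for $k \neq k'$, the ranges of $A_k$ and $A_{k'}$ are mutually orthogonal (as ${\rm R}(A_k) \subseteq {\rm N}(A_{k'}^*) = {\rm R}(A_{k'})^\perp$), and similarly the ranges of the adjoints are mutually orthogonal. This orthogonality is the engine behind everything.

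First I would prove (i). Recall that $A_{k'}^\dag$ has range ${\rm R}(A_{k'}^*)$ by the standard identities (indeed ${\rm R}(A_{k'}^\dag) = {\rm R}(A_{k'}^\dag A_{k'}) = {\rm R}(P_{{\rm R}(A_{k'}^*)}) = {\rm R}(A_{k'}^*)$). Since ${\rm R}(A_{k'}^*) \subseteq {\rm N}(A_k)$ by hypothesis, applying $A_k$ annihilates everything in ${\rm R}(A_{k'}^\dag)$, giving $A_k A_{k'}^\dag = 0$. The identity $A_k^\dag A_{k'} = 0$ is symmetric: ${\rm R}(A_{k'}) \subseteq {\rm N}(A_k^*)$, and ${\rm N}(A_k^*) = {\rm N}(A_k^\dag)$ because $A_k^\dag = A_k^* (A_k A_k^*)^\dag$ has the same null space as $A_k^*$ (or directly, $A_k^\dag = A_k^\dag A_k A_k^\dag$ forces ${\rm N}(A_k^\dag) \supseteq {\rm N}(A_k^*)$, and ranks match). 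Hence $A_k^\dag A_{k'} = 0$.

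Next, for (ii) and (iii), the orthogonality of the summands' ranges makes the sum of ranges a genuine orthogonal direct sum. For (ii), the inclusion ${\rm R}(\sum_k A_k) \subseteq \sum_k {\rm R}(A_k)$ is automatic; for the reverse and the directness, I would use the projections. Writing $S = \sum_k A_k$, I would verify $A_{k'}^\dag A_{k'} = P_{{\rm R}(A_{k'}^*)}$ and then show $S P_{{\rm R}(A_{k'}^*)} = A_{k'} P_{{\rm R}(A_{k'}^*)} = A_{k'}$ using part (i) (the cross terms $A_k P_{{\rm R}(A_{k'}^*)} = A_k A_{k'}^\dag A_{k'} = 0$ for $k \neq k'$), which shows each ${\rm R}(A_{k'}) = {\rm R}(A_{k'} P_{{\rm R}(A_{k'}^*)}) \subseteq {\rm R}(S)$; directness and closedness follow from mutual orthogonality since a finite orthogonal sum of closed subspaces is closed. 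The adjoint statement is obtained by applying the same argument to $\{A_k^*\}$, whose hypotheses are symmetric. For (iii), I would take orthogonal complements in (ii): ${\rm N}(S) = {\rm R}(S^*)^\perp = \big(\bigoplus_k {\rm R}(A_k^*)\big)^\perp = \bigcap_k {\rm R}(A_k^*)^\perp = \bigcap_k {\rm N}(A_k)$, using $S^* = \sum_k A_k^*$ and the fact that the orthogonal complement of a sum of subspaces is the intersection of complements.

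Finally for (iv), the clean strategy is to set $X = \sum_k A_k^\dag$ and verify the four Penrose equations for $S$, exploiting that all cross terms vanish by (i). For equation (1), $SXS = \big(\sum_k A_k\big)\big(\sum_{k'} A_{k'}^\dag\big)\big(\sum_{k''} A_{k''}\big)$ collapses to $\sum_k A_k A_k^\dag A_k = \sum_k A_k = S$ because any term with two distinct indices contains a factor $A_k A_{k'}^\dag$ or $A_k^\dag A_{k'}$ that is zero; the same collapse handles (2). For the symmetry equations, $SX = \sum_k A_k A_k^\dag = \sum_k P_{{\rm R}(A_k)}$, a sum of orthogonal projections onto mutually orthogonal subspaces, hence self-adjoint, giving (3); likewise $XS = \sum_k A_k^\dag A_k = \sum_k P_{{\rm R}(A_k^*)}$ is self-adjoint, giving (4). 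Since ${\rm R}(S)$ is closed by (ii), $S^\dag$ exists and the uniqueness of the Moore--Penrose inverse forces $S^\dag = X$. The main obstacle I anticipate is not any single computation but bookkeeping: being careful that the cross-term annihilation from (i) genuinely kills every mixed product in the double and triple sums, and confirming that the finite orthogonal direct sums in (ii) remain closed so that $S^\dag$ is well-defined in the first place.
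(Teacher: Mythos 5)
Your proposal is correct, and its overall architecture matches the paper's: orthogonality of ranges from the hypotheses, part (i) via ${\rm R}(A_{k'}^{\dag})={\rm R}(A_{k'}^{*})\subseteq {\rm N}(A_k)$ and ${\rm R}(A_{k'})\subseteq{\rm N}(A_k^{*})={\rm N}(A_k^{\dag})$, part (ii) via the same cancellation $A_k P_{{\rm R}(A_{k'}^{*})}=0$ (you package it as the operator identity $SP_{{\rm R}(A_{k'}^{*})}=A_{k'}$, the paper chases an arbitrary element $g=\sum_k A_k f_k$ to a preimage, but these are the same computation), and part (iv) by verifying the four Penrose equations for $X=\sum_k A_k^{\dag}$ using (i) to kill cross terms, exactly what the paper leaves implicit in ``follows from Definition~\ref{D MPI}.''

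The one place you genuinely diverge is part (iii). The paper proves it directly: for $f\in{\rm N}(\sum_k A_k)$ and any $k_0$, it writes $A_{k_0}f=-\sum_{k\neq k_0}A_k f$ and observes this vector lies in ${\rm R}(A_{k_0})\cap{\rm N}(A_{k_0}^{*})=\set{0}$, so $f\in{\rm N}(A_{k_0})$. You instead deduce (iii) from (ii) by duality: ${\rm N}(S)={\rm R}(S^{*})^{\perp}=\bigl(\bigoplus_k{\rm R}(A_k^{*})\bigr)^{\perp}=\bigcap_k{\rm N}(A_k)$. Your route is shorter and reuses (ii) rather than requiring a separate argument; the paper's route has the mild advantage of being independent of (ii) (it needs only the hypothesis, not the range identity). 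Both are valid, and your version of (ii) correctly supplies the closedness of ${\rm R}(S)$ needed for the existence of $S^{\dag}$ in (iv), so nothing is missing.
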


\begin{proof}
Part (i) follows from the inclusions ${\rm R}\!\paren{A_{k'}^{\dag}}
= {\rm R}\!\paren{A_{k'}^{*}} \subseteq {\rm N}\!\paren{A_{k}}$ and
${\rm R}\!\paren{A_{k'}} \subseteq {\rm N}\!\paren{A_{k}^{*}} = {\rm
N}\!\paren{A_{k}^{\dag}}$ for each $k, k' = 1, \ldots, K$, $k \neq
k'$.

Since ${\rm R}\!\paren{A_{k}} \subseteq {\rm N}\!\paren{A_{k'}^{*}}$
for each $k, k' = 1, \ldots, K$, $k \neq k'$, we get ${\rm
R}\!\paren{A_{k}} \perp {\rm R}\!\paren{A_{k'}}$ for each $k, k' =
1, \ldots, K$, $k \neq k'$. Then, we have the orthogonal sum
$\bigoplus_{k=1}^{K}{\rm R}\!\paren{A_{k}}$. Clearly, ${\rm
R}\!\paren{\sum_{k=1}^{K}A_{k}} \subseteq \bigoplus_{k=1}^{K}{\rm
R}\!\paren{A_{k}}$ and $\bigcap_{k=1}^{K} {\rm N}\!\paren{A_{k}}
\subseteq {\rm N}\!\paren{\sum_{k=1}^{K}A_{k}}$.

Let $g \in \bigoplus_{k=1}^{K}{\rm R}\!\paren{A_{k}}$. Then there
exist $f_{1}, \ldots, f_{K} \in \mathcal{H}_{1}$ such that
$g=\sum_{k=1}^{K} A_{k}f_{k}=\sum_{k=1}^{K} A_{k}P_{{\rm
R}\!\paren{A_{k}^{*}}}f_{k}$. Let $f=\sum_{k=1}^{K}P_{{\rm
R}\!\paren{A_{k}^{*}}}f_{k}$. Using that ${\rm R}\!\paren{A_{k}^{*}}
\subseteq {\rm N}\!\paren{A_{k'}}$ for each $k, k' = 1, \ldots, K$,
$k \neq k'$, we get
\[\paren{\sum_{k=1}^{K}A_{k}}f=\paren{\sum_{k=1}^{K}A_{k}}\paren{\sum_{k=1}^{K}P_{{\rm
R}\!\paren{A_{k}^{*}}}f_{k}}=\sum_{k=1}^{K} A_{k}P_{{\rm
R}\!\paren{A_{k}^{*}}}f_{k}=g.\]

\noindent Then $g \in {\rm R}\!\paren{\sum_{k=1}^{K}A_{k}}$. This
sows that $\bigoplus_{k=1}^{K}{\rm R}\!\paren{A_{k}} \subseteq {\rm
R}\!\paren{\sum_{k=1}^{K}A_{k}}$. Hence, ${\rm
R}\!\paren{\sum_{k=1}^{K}A_{k}} = \bigoplus_{k=1}^{K}{\rm
R}\!\paren{A_{k}}$. The equality ${\rm
R}\!\paren{\sum_{k=1}^{K}A_{k}^{*}} = \bigoplus_{k=1}^{K}{\rm
R}\!\paren{A_{k}^{*}}$ can be proved similarly. Therefore, (ii)
holds.

Let $k_{0} \in \set{1, \ldots, K}$. We have $f \in {\rm
N}\!\paren{\sum_{k=1}^{K}A_{k}}$ if and only if
\[A_{k_{0}}f=-\paren{\sum_{k=1, k \neq k_{0}}^{K}A_{k}}f \in {\rm
R}\!\paren{A_{k_{0}}} \cap {\rm R}\!\paren{\sum_{k=1, k \neq
k_{0}}^{K}A_{k}} \subseteq {\rm R}\!\paren{A_{k_{0}}} \cap {\rm
N}\!\paren{A_{k_{0}}^{*}} = \set{0}.\]

\noindent Thus, $f \in {\rm N}\!\paren{A_{k_{0}}}$ for each $k_{0}
\in \set{1, \ldots, K}$. Therefore, $\bigcap_{k=1}^{K} {\rm
N}\!\paren{A_{k}} = {\rm N}\!\paren{\sum_{k=1}^{K}A_{k}}$. In a
similar way, we prove that ${\rm N}\!\paren{\sum_{k=1}^{K}A_{k}^{*}}
= \bigcap_{k=1}^{K} {\rm N}\!\paren{A_{k}^{*}}$. This shows (iii).

By (ii), ${\rm R}\!\paren{\sum_{k=1}^{K}A_{k}}$ is closed. Thus,
$\paren{\sum_{k=1}^{K}A_{k}}^{\dag}$ exists.  Using part now (i),
the rest of (iv) follows from Definition~\ref{D MPI}.
\end{proof}

From the definition of $\{1, 3, 4\}$-inverse and Theorem~\ref{T MPI
expression sum}(i)(iv) we obtain the following result:

\begin{corollary}\label{C 134 inverse}
Let $\set{A_{k}}_{k=1}^{K} \subset
\mathcal{BC}\!\paren{\mathcal{H}_{1}, \mathcal{H}_{2}}$. Assume that
${\rm R}\!\paren{A_{k}} \subseteq {\rm N}\!\paren{A_{k'}^{*}}$ and
${\rm R}\!\paren{A_{k}^{*}} \subseteq {\rm N}\!\paren{A_{k'}}$ for
each $k, k' = 1, \ldots, K$, $k \neq k'$.  Then
$\paren{\sum_{k=1}^{K}A_{k}}^{\dag}$ is a $\{1, 3, 4\}$-inverse of
$A_{k_{0}}$ for each $k_{0} = 1, \ldots, K$.
\end{corollary}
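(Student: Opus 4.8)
The plan is to verify directly that $X := \paren{\sum_{k=1}^{K}A_{k}}^{\dag}$ satisfies the three defining equations of a $\{1,3,4\}$-inverse of a fixed $A_{k_0}$, namely $A_{k_0}XA_{k_0}=A_{k_0}$, $\paren{A_{k_0}X}^{*}=A_{k_0}X$, and $\paren{XA_{k_0}}^{*}=XA_{k_0}$. Write $S=\sum_{k=1}^{K}A_{k}$, so that by Theorem~\ref{T MPI expression sum}(iv) we have $X=S^{\dag}=\sum_{k=1}^{K}A_{k}^{\dag}$. The whole argument will rest on the orthogonality relations packaged in Theorem~\ref{T MPI expression sum}(i), which assert $A_{k}A_{k'}^{\dag}=0$ and $A_{k}^{\dag}A_{k'}=0$ whenever $k\neq k'$.

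First I would compute $A_{k_0}X=A_{k_0}\sum_{k=1}^{K}A_{k}^{\dag}$ and observe that all cross terms with $k\neq k_0$ vanish by part (i), leaving $A_{k_0}X=A_{k_0}A_{k_0}^{\dag}$. Similarly $XA_{k_0}=\sum_{k=1}^{K}A_{k}^{\dag}A_{k_0}=A_{k_0}^{\dag}A_{k_0}$, again because every term with $k\neq k_0$ dies. These two collapses are the crux of the whole proof: once the sum reduces to a single Moore--Penrose interaction $A_{k_0}A_{k_0}^{\dag}$ and $A_{k_0}^{\dag}A_{k_0}$, the remaining verification is immediate from the defining properties of $A_{k_0}^{\dag}$ itself.

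Next I would read off the three required identities. For the $\{3\}$-condition, $\paren{A_{k_0}X}^{*}=\paren{A_{k_0}A_{k_0}^{\dag}}^{*}=A_{k_0}A_{k_0}^{\dag}=A_{k_0}X$, which is precisely equation (3) in Definition~\ref{D MPI} applied to $A_{k_0}$. For the $\{4\}$-condition, $\paren{XA_{k_0}}^{*}=\paren{A_{k_0}^{\dag}A_{k_0}}^{*}=A_{k_0}^{\dag}A_{k_0}=XA_{k_0}$, which is equation (4) for $A_{k_0}$. For the $\{1\}$-condition, $A_{k_0}XA_{k_0}=\paren{A_{k_0}A_{k_0}^{\dag}}A_{k_0}=A_{k_0}A_{k_0}^{\dag}A_{k_0}=A_{k_0}$, using equation (1) for $A_{k_0}$.

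Having checked (1), (3) and (4), the conclusion that $X$ is a $\{1,3,4\}$-inverse of $A_{k_0}$ follows by the very definition of that notion. Since $k_0\in\set{1,\ldots,K}$ was arbitrary, the claim holds for every $k_0$. I do not anticipate a genuine obstacle here: the only substantive input is the vanishing of the cross terms, and that has already been secured in part~(i), so the remaining work is purely the mechanical collapse of the sums followed by the single-operator Moore--Penrose axioms.
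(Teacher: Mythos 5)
Your proposal is correct and is exactly the argument the paper leaves implicit: the paper derives the corollary ``from the definition of $\{1,3,4\}$-inverse and Theorem~\ref{T MPI expression sum}(i)(iv),'' which when expanded is precisely your computation---use (iv) to write $\paren{\sum_{k}A_{k}}^{\dag}=\sum_{k}A_{k}^{\dag}$, use (i) to collapse $A_{k_0}X$ and $XA_{k_0}$ to $A_{k_0}A_{k_0}^{\dag}$ and $A_{k_0}^{\dag}A_{k_0}$, and then verify equations (1), (3), (4) of Definition~\ref{D MPI}. No gaps; your write-up simply makes the paper's one-line justification explicit.
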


From Theorem~\ref{T MPI expression sum}, we now derive expressions
and characterizations for the Moore--Penrose inverse of an operator.
They are useful for its computation and part of them can be viewed
as an extension of Proposition~\ref{P MPI usando adjunta} to a sum
of operators with closed range. Before we enunciate the results, we
note that, under the hypothesis of Theorem~\ref{T MPI expression
sum}, we always have ${\rm R}\!\paren{A_{k_{0}}^{*}} \subseteq {\rm
N}\!\paren{\sum_{k=1,k\neq k_{0}}^{K}A_{k}}$ and ${\rm
R}\!\paren{\sum_{k=1,k\neq k_{0}}^{K}A_{k}} \subseteq {\rm
N}\!\paren{A_{k_{0}}^{*}}$ for each $k_{0} \in \set{1, \ldots, K}$.

\begin{theorem}\label{T MPI expression sum injective}
Let $\set{A_{k}}_{k=1}^{K} \subset
\mathcal{BC}\!\paren{\mathcal{H}_{1}, \mathcal{H}_{2}}$. Assume that
${\rm R}\!\paren{A_{k}} \subseteq {\rm N}\!\paren{A_{k'}^{*}}$ and
${\rm R}\!\paren{A_{k}^{*}} \subseteq {\rm N}\!\paren{A_{k'}}$ for
each $k, k' = 1, 2, \ldots$, $k \neq k'$. Let $k_{0} \in \set{1,
\ldots, K}$. The following assertions hold:
\begin{enumerate}
  \item[(i)] $A_{k_{0}}^{\dag}=\paren{\sum_{k=1}^{K}A_{k}^{*}A_{k}}^{\dag}\paren{\sum_{k=1}^{K}A_{k}^{*}}-\sum_{k=1,k\neq
k_{0}}^{K}A_{k}^{\dag}$.
  \item[(ii)] $\paren{\sum_{k=1}^{K}A_{k}^{*}A_{k}}A_{k_{0}}^{\dag}=A_{k_{0}}^{*}$ and $A_{k_{0}}^{\dag}=\paren{\sum_{k=1}^{K}A_{k}^{*}A_{k}}^{\dag}A_{k_{0}}^{*}$.
  \item[(iii)] ${\rm N}\!\paren{\sum_{k=1}^{K}A_{k}}=\set{0}$ if and only if
${\rm R}\!\paren{A_{k_{0}}^{*}} = {\rm N}\!\paren{\sum_{k=1,k\neq
k_{0}}^{K}A_{k}}$.
  \item[(iv)] If ${\rm
N}\!\paren{\sum_{k=1}^{K}A_{k}}=\set{0}$, then
$\sum_{k=1}^{K}A_{k}^{*}A_{k}$ is invertible.
\end{enumerate}
\end{theorem}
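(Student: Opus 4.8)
The plan is to reduce everything to the family $\set{A_{k}^{*}A_{k}}_{k=1}^{K}$ of positive self-adjoint operators and apply the already-established Theorem~\ref{T MPI expression sum} to it. First I would verify that this family satisfies the hypotheses of that theorem: since ${\rm R}\!\paren{A_{k}^{*}A_{k}} = {\rm R}\!\paren{A_{k}^{*}}$ and ${\rm N}\!\paren{A_{k}^{*}A_{k}} = {\rm N}\!\paren{A_{k}}$ for closed-range operators, the required inclusions ${\rm R}\!\paren{A_{k}^{*}A_{k}} \subseteq {\rm N}\!\paren{\paren{A_{k'}^{*}A_{k'}}^{*}}$ are exactly the assumption ${\rm R}\!\paren{A_{k}^{*}} \subseteq {\rm N}\!\paren{A_{k'}}$. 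Hence Theorem~\ref{T MPI expression sum}(iv) gives $\paren{\sum_{k}A_{k}^{*}A_{k}}^{\dag} = \sum_{k}\paren{A_{k}^{*}A_{k}}^{\dag}$, and Theorem~\ref{T MPI expression sum}(ii) shows its range is the closed orthogonal sum $\bigoplus_{k}{\rm R}\!\paren{A_{k}^{*}}$.

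The computational heart is the cross-term identity $\paren{A_{j}^{*}A_{j}}^{\dag}A_{k}^{*} = A_{k}^{\dag}$ if $j = k$ and $= 0$ if $j \neq k$. The diagonal case is Proposition~\ref{P MPI usando adjunta}(i); the off-diagonal case follows because ${\rm N}\!\paren{\paren{A_{j}^{*}A_{j}}^{\dag}} = {\rm N}\!\paren{A_{j}}$ contains ${\rm R}\!\paren{A_{k}^{*}}$ by hypothesis. Summing this identity over $j$ and $k$ yields $\paren{\sum_{k}A_{k}^{*}A_{k}}^{\dag}\paren{\sum_{k}A_{k}^{*}} = \sum_{k}A_{k}^{\dag}$; subtracting $\sum_{k\neq k_{0}}A_{k}^{\dag}$ proves (i), while keeping only the $j=k=k_{0}$ term proves the second equation of (ii). For the first equation of (ii), I would expand $\paren{\sum_{k}A_{k}^{*}A_{k}}A_{k_{0}}^{\dag} = \sum_{k}A_{k}^{*}\paren{A_{k}A_{k_{0}}^{\dag}}$, discard the terms with $k \neq k_{0}$ using Theorem~\ref{T MPI expression sum}(i), and simplify the surviving term via $A_{k_{0}}^{*}A_{k_{0}}A_{k_{0}}^{\dag} = A_{k_{0}}^{*}P_{{\rm R}\!\paren{A_{k_{0}}}} = A_{k_{0}}^{*}$.

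For (iii) I would exploit that the hypothesis forces the subspaces ${\rm R}\!\paren{A_{k}^{*}}$ to be mutually orthogonal, so that $\mathcal{H}_{1} = \bigoplus_{k=1}^{K}{\rm R}\!\paren{A_{k}^{*}} \oplus \bigcap_{k=1}^{K}{\rm N}\!\paren{A_{k}}$. Taking the orthogonal complement of $\bigoplus_{k\neq k_{0}}{\rm R}\!\paren{A_{k}^{*}} = {\rm R}\!\paren{\sum_{k\neq k_{0}}A_{k}^{*}}$, whose complement is ${\rm N}\!\paren{\sum_{k\neq k_{0}}A_{k}}$, yields the clean decomposition ${\rm N}\!\paren{\sum_{k\neq k_{0}}A_{k}} = {\rm R}\!\paren{A_{k_{0}}^{*}} \oplus \bigcap_{k=1}^{K}{\rm N}\!\paren{A_{k}}$. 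The equality ${\rm R}\!\paren{A_{k_{0}}^{*}} = {\rm N}\!\paren{\sum_{k\neq k_{0}}A_{k}}$ then holds precisely when $\bigcap_{k}{\rm N}\!\paren{A_{k}}=\set{0}$, which by Theorem~\ref{T MPI expression sum}(iii) is ${\rm N}\!\paren{\sum_{k}A_{k}}=\set{0}$.

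Finally, for (iv) I would set $B = \sum_{k}A_{k}^{*}A_{k}$, self-adjoint and positive, with ${\rm N}\!\paren{B} = \bigcap_{k}{\rm N}\!\paren{A_{k}}$ since $\prode{Bx,x} = \sum_{k}\norm{A_{k}x}^{2}$. Under the assumption ${\rm N}\!\paren{\sum_{k}A_{k}}=\set{0}$ this null space is trivial, and since $B$ has closed range (its range is the finite orthogonal sum $\bigoplus_{k}{\rm R}\!\paren{A_{k}^{*}}$), a self-adjoint operator that is injective with closed range satisfies $\overline{{\rm R}\!\paren{B}} = {\rm N}\!\paren{B}^{\perp} = \mathcal{H}_{1}$, hence is invertible. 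The one point demanding care throughout is the systematic use of the mutual orthogonality of the ranges ${\rm R}\!\paren{A_{k}^{*}}$, which is precisely what collapses all cross terms and produces the direct-sum decomposition exploited in (iii).
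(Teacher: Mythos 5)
Your proof is correct, but it follows a genuinely different route from the paper's. The paper applies Theorem~\ref{T MPI expression sum} only to $\set{A_{k}}_{k=1}^{K}$ and its subfamilies, never to $\set{A_{k}^{*}A_{k}}_{k=1}^{K}$: part (i) comes from $\paren{\sum_{k}A_{k}}^{\dag}=\sum_{k}A_{k}^{\dag}$ combined with Proposition~\ref{P MPI usando adjunta}(i) applied to the aggregate operator $\sum_{k}A_{k}$, expanding $\paren{\sum_{k}A_{k}^{*}}\paren{\sum_{k}A_{k}}=\sum_{k}A_{k}^{*}A_{k}$ because cross terms vanish; the second identity in (ii) is obtained by left-multiplying the first by $\paren{\sum_{k}A_{k}^{*}A_{k}}^{\dag}$ and verifying, via ${\rm R}\!\paren{\sum_{k}A_{k}^{*}A_{k}}=\bigoplus_{k}{\rm R}\!\paren{A_{k}^{*}}\supseteq {\rm R}\!\paren{A_{k_{0}}^{\dag}}$, that the resulting projection acts as the identity on ${\rm R}\!\paren{A_{k_{0}}^{\dag}}$; the forward half of (iii) is an element-wise argument, splitting $f=P_{{\rm R}\!\paren{A_{k_{0}}^{*}}}f+P_{{\rm N}\!\paren{A_{k_{0}}}}f$ and using injectivity of the full sum; and (iv) is a one-line citation of Proposition~\ref{P MPI usando adjunta}(ii). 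By contrast, you transfer the sum theorem to the derived family $\set{A_{k}^{*}A_{k}}_{k=1}^{K}$ (after the correct observation that its hypotheses are inherited, since ${\rm R}\!\paren{A_{k}^{*}A_{k}}={\rm R}\!\paren{A_{k}^{*}}$ and ${\rm N}\!\paren{A_{k}^{*}A_{k}}={\rm N}\!\paren{A_{k}}$), and you let the single cross-term identity $\paren{A_{j}^{*}A_{j}}^{\dag}A_{k}^{*}=A_{k}^{\dag}$ for $j=k$, $=0$ for $j\neq k$, generate both (i) and the second half of (ii) by summation; you prove (iii) from the global orthogonal decomposition $\mathcal{H}_{1}=\bigoplus_{k}{\rm R}\!\paren{A_{k}^{*}}\oplus\bigcap_{k}{\rm N}\!\paren{A_{k}}$, which exhibits exactly what obstructs the equality, namely the common null space $\bigcap_{k}{\rm N}\!\paren{A_{k}}$, instead of chasing elements. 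The paper's approach buys brevity --- notably in (iv), where your positivity-plus-closed-range argument re-proves what Proposition~\ref{P MPI usando adjunta}(ii) gives for free --- while yours buys modularity and a clearer structural picture of why all cross terms collapse. One step you should make explicit: the identification $\bigoplus_{k\neq k_{0}}{\rm R}\!\paren{A_{k}^{*}}={\rm R}\!\paren{\sum_{k\neq k_{0}}A_{k}^{*}}$ used in your proof of (iii) is Theorem~\ref{T MPI expression sum}(ii) applied to the subfamily $\set{A_{k}}_{k\neq k_{0}}$; the hypotheses are indeed inherited, but this is precisely where they are needed, so it deserves a sentence rather than being asserted silently.
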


\begin{proof}
(i): From Theorem~\ref{T MPI expression sum}(iv) and
Proposition~\ref{P MPI usando adjunta}(i), we obtain
\begin{align*}
A_{k_{0}}^{\dag}&=\paren{\sum_{k=1}^{K}A_{k}}^{\dag}-\paren{\sum_{k=1,k\neq
k_{0}}^{K}A_{k}}^{\dag}\\
&=\paren{\paren{\sum_{k=1}^{K}A_{k}^{*}}\paren{\sum_{k=1}^{K}A_{k}}}^{\dag}\paren{\sum_{k=1}^{K}A_{k}^{*}}-\sum_{k=1,k\neq
k_{0}}^{K}A_{k}^{\dag}\\
&=\paren{\sum_{k=1}^{K}A_{k}^{*}A_{k}}^{\dag}\paren{\sum_{k=1}^{K}A_{k}^{*}}-\sum_{k=1,k\neq
k_{0}}^{K}A_{k}^{\dag}.
\end{align*}
(ii): Using Theorem~\ref{T MPI expression sum}(i) we get,
\[\paren{\sum_{k=1}^{K}A_{k}^{*}A_{k}}A_{k_{0}}^{\dag}=A_{k_{0}}^{*}A_{k_{0}}A_{k_{0}}^{\dag}=A_{k_{0}}^{*}P_{{\rm
R}\paren{A_{k_{0}}}}=A_{k_{0}}^{*}.\] From this equality,
\begin{equation}\label{E T MPI expression series injective}
\paren{\sum_{k=1}^{K}A_{k}^{*}A_{k}}^{\dag}\paren{\sum_{k=1}^{K}A_{k}^{*}A_{k}}A_{k_{0}}^{\dag}=\paren{\sum_{k=1}^{K}A_{k}^{*}A_{k}}^{\dag}A_{k_{0}}^{*}.
\end{equation} Since
\[\paren{\sum_{k=1}^{K}A_{k}^{*}A_{k}}^{\dag}\paren{\sum_{k=1}^{K}A_{k}^{*}A_{k}}=P_{{\rm
R}\paren{\sum_{k=1}^{K}A_{k}^{*}A_{k}}}\] and, by Theorem~\ref{T MPI
expression sum}(ii),
\[{\rm
R}\!\paren{\sum_{k=1}^{K}A_{k}^{*}A_{k}}={\rm
R}\!\paren{\paren{\sum_{k=1}^{K}A_{k}^{*}}\paren{\sum_{k=1}^{K}A_{k}}}={\rm
R}\!\paren{\sum_{k=1}^{K}A_{k}^{*}} = \bigoplus_{k=1}^{K}{\rm
R}\!\paren{A_{k}^{*}} \supseteq {\rm R}\paren{A_{k_{0}}^{*}}={\rm
R}\paren{A_{k_{0}}^{\dag}},\] equality (\ref{E T MPI expression
series injective}) becomes
\[A_{k_{0}}^{\dag}=\paren{\sum_{k=1}^{K}A_{k}^{*}A_{k}}^{\dag}A_{k_{0}}^{*}.\]
(iii): Assume that ${\rm N}\!\paren{\sum_{k=1}^{K}A_{k}}=\set{0}$.
Let $f \in {\rm N}\!\paren{\sum_{k=1,k\neq k_{0}}^{K}A_{k}}$. Since
$f = P_{{\rm R}\!\paren{A_{k_{0}}^{*}}}f+P_{{\rm
N}\!\paren{A_{k_{0}}}}f$ and ${\rm R}\!\paren{A_{k_{0}}^{*}}
\subseteq {\rm N}\!\paren{A_{k}}$ for $k \neq k_{0}$,
\[\sum_{k=1}^{K}A_{k}P_{{\rm N}\!\paren{A_{k_{0}}}}f=\sum_{k=1,k\neq
k_{0}}^{K}A_{k}P_{{\rm N}\!\paren{A_{k_{0}}}}f=\sum_{k=1,k\neq
k_{0}}^{K}A_{k}f=0.\] \noindent Consequently, $P_{{\rm
N}\!\paren{A_{k_{0}}}}f=0$ and $f \in {\rm
R}\!\paren{A_{k_{0}}^{*}}$. This shows that ${\rm
N}\!\paren{\sum_{k=1,k\neq k_{0}}^{K}A_{k}}={\rm
R}\!\paren{A_{k_{0}}^{*}}$.

Assume now that ${\rm R}\!\paren{A_{k_{0}}^{*}} = {\rm
N}\!\paren{\sum_{k=1,k\neq k_{0}}^{K}A_{k}}$. By Theorem~\ref{T MPI
expression sum}(iii),
\[{\rm N}\!\paren{\sum_{k=1}^{K}A_{k}} = \bigcap_{k=1}^{K} {\rm
N}\!\paren{A_{k}} = {\rm N}\!\paren{A_{k_{0}}} \cap {\rm
N}\!\paren{\sum_{k=1,k\neq k_{0}}^{K}A_{k}} = {\rm
N}\!\paren{A_{k_{0}}} \cap {\rm R}\!\paren{A_{k_{0}}^{*}} =
\set{0}.\] (iv): It follows from Proposition~\ref{P MPI usando
adjunta}(ii).
\end{proof}
For future references, we enunciate the following straightforward
corollary.
\begin{corollary}\label{C MPI expression sum injective}
Let $\set{A_{k}}_{k=1}^{K} \subset
\mathcal{BC}\!\paren{\mathcal{H}_{1}, \mathcal{H}_{2}}$. Assume that
${\rm R}\!\paren{A_{k}} \subseteq {\rm N}\!\paren{A_{k'}^{*}}$ and
${\rm R}\!\paren{A_{k}^{*}} \subseteq {\rm N}\!\paren{A_{k'}}$ for
each $k, k' = 1, 2, \ldots$, $k \neq k'$. Let $k_{0} \in \set{1,
\ldots, K}$. If $\sum_{k=1}^{K}A_{k}$ is injective, then
$\sum_{k=1}^{K}A_{k}^{*}A_{k}$ is invertible and $A_{k_{0}}^{\dag}$
is the unique solution of the operator equation
$\paren{\sum_{k=1}^{K}A_{k}^{*}A_{k}}X=A_{k_{0}}^{*}$.
\end{corollary}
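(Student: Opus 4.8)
The plan is to derive this corollary directly from Theorem~\ref{T MPI expression sum injective}, since its hypotheses are exactly the standing orthogonality conditions of that theorem supplemented by the extra assumption that $\sum_{k=1}^{K}A_{k}$ is injective, i.e.\ ${\rm N}\!\paren{\sum_{k=1}^{K}A_{k}}=\set{0}$. So essentially no new machinery is needed; the work is in assembling the right parts.

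First I would invoke Theorem~\ref{T MPI expression sum injective}(iv): under the orthogonality hypotheses, injectivity of $\sum_{k=1}^{K}A_{k}$ immediately yields that $\sum_{k=1}^{K}A_{k}^{*}A_{k}$ is invertible. This settles the first claim of the corollary and, crucially, is what will later provide uniqueness of the solution.

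Next I would appeal to Theorem~\ref{T MPI expression sum injective}(ii), whose first identity reads $\paren{\sum_{k=1}^{K}A_{k}^{*}A_{k}}A_{k_{0}}^{\dag}=A_{k_{0}}^{*}$. Viewed as an equation in the unknown operator $X$, this says precisely that $X=A_{k_{0}}^{\dag}$ is a solution of $\paren{\sum_{k=1}^{K}A_{k}^{*}A_{k}}X=A_{k_{0}}^{*}$, so the existence of the asserted solution is automatic and requires no separate computation.

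Finally, for uniqueness I would combine the two previous steps: since $\sum_{k=1}^{K}A_{k}^{*}A_{k}$ is invertible, left-multiplication of the equation by its inverse forces any solution to equal $\paren{\sum_{k=1}^{K}A_{k}^{*}A_{k}}^{-1}A_{k_{0}}^{*}$, hence to coincide with $A_{k_{0}}^{\dag}$. There is no genuine obstacle in the argument; the only point deserving care is to keep the logical dependence straight, namely that the invertibility established via part (iv) is exactly what upgrades the mere existence coming from part (ii) into the stated uniqueness.
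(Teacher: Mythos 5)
Your proposal is correct and follows exactly the route the paper intends: the paper labels this a ``straightforward corollary'' of Theorem~\ref{T MPI expression sum injective}, with invertibility of $\sum_{k=1}^{K}A_{k}^{*}A_{k}$ coming from part (iv) (injectivity being precisely ${\rm N}\!\paren{\sum_{k=1}^{K}A_{k}}=\set{0}$), existence of the solution from the first identity in part (ii), and uniqueness from invertibility. Nothing is missing.
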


The following theorem can be proved in a similar manner as was
proved Theorem~\ref{T MPI expression sum injective} or can be proved
applying Theorem~\ref{T MPI expression sum injective} to
$\set{A_{k}^{*}}_{k=1}^{K}$ and then taking adjoints.

\begin{theorem}\label{T MPI expression sum surjective}
Let $\set{A_{k}}_{k=1}^{K} \subset
\mathcal{BC}\!\paren{\mathcal{H}_{1}, \mathcal{H}_{2}}$. Assume that
${\rm R}\!\paren{A_{k}} \subseteq {\rm N}\!\paren{A_{k'}^{*}}$ and
${\rm R}\!\paren{A_{k}^{*}} \subseteq {\rm N}\!\paren{A_{k'}}$ for
each $k, k' = 1, 2, \ldots$, $k \neq k'$. Let $k_{0} \in \set{1,
\ldots, K}$. The following assertions hold:
\begin{enumerate}
  \item[(i)] $A_{k_{0}}^{\dag}=\paren{\sum_{k=1}^{K}A_{k}^{*}}\paren{\sum_{k=1}^{K}A_{k}A_{k}^{*}}^{\dag}-\sum_{k=1,k\neq
k_{0}}^{K}A_{k}^{\dag}$.
  \item[(ii)] $A_{k_{0}}^{\dag}\paren{\sum_{k=1}^{K}A_{k}A_{k}^{*}}=A_{k_{0}}^{*}$ and $A_{k_{0}}^{\dag}=A_{k_{0}}^{*}\paren{\sum_{k=1}^{K}A_{k}A_{k}^{*}}^{\dag}$.
  \item[(iii)] ${\rm R}\!\paren{\sum_{k=1}^{K}A_{k}}=\mathcal{H}_{2}$ if and only if ${\rm R}\!\paren{\sum_{k=1,k\neq k_{0}}^{K}A_{k}} = {\rm
N}\!\paren{A_{k_{0}}^{*}}$.
  \item[(iv)] If ${\rm R}\!\paren{\sum_{k=1}^{K}A_{k}}=\mathcal{H}_{2}$, then
$\sum_{k=1}^{K}A_{k}A_{k}^{*}$ is invertible.
\end{enumerate}
\end{theorem}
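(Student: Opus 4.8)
The plan is to deduce this theorem from Theorem~\ref{T MPI expression sum injective} by the adjoint--duality technique, exactly as the author suggests. Writing $B_{k}=A_{k}^{*}$, the hypotheses ${\rm R}\!\paren{B_{k}} \subseteq {\rm N}\!\paren{B_{k'}^{*}}$ and ${\rm R}\!\paren{B_{k}^{*}} \subseteq {\rm N}\!\paren{B_{k'}}$ read, after using $B_{k}^{*}=A_{k}$, as ${\rm R}\!\paren{A_{k}^{*}} \subseteq {\rm N}\!\paren{A_{k'}}$ and ${\rm R}\!\paren{A_{k}} \subseteq {\rm N}\!\paren{A_{k'}^{*}}$; these are precisely the two hypotheses assumed for $\set{A_{k}}_{k=1}^{K}$, with the two inclusions interchanged. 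Hence $\set{A_{k}^{*}}_{k=1}^{K}$ satisfies the hypotheses of Theorem~\ref{T MPI expression sum injective}, and I may apply every part of that theorem to it with the same index $k_{0}$.

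To obtain (i) and (ii) I would write out the corresponding assertions of Theorem~\ref{T MPI expression sum injective} for $\set{A_{k}^{*}}_{k=1}^{K}$ and then take adjoints of the resulting operator identities. The needed simplifications are $\paren{A_{k}^{*}}^{*}=A_{k}$, $\paren{A_{k}^{*}}^{\dag}=\paren{A_{k}^{\dag}}^{*}$ and the self-adjointness of $\sum_{k=1}^{K}A_{k}A_{k}^{*}$, so that this sum is fixed under $*$ while a product of operators reverses its order under $*$. For instance, part (i) for the adjoints expresses $\paren{A_{k_{0}}^{\dag}}^{*}$ as $\paren{\sum_{k=1}^{K}A_{k}A_{k}^{*}}^{\dag}\paren{\sum_{k=1}^{K}A_{k}}-\sum_{k=1,k\neq k_{0}}^{K}\paren{A_{k}^{\dag}}^{*}$, and taking adjoints yields exactly the formula claimed in (i); the two identities in (ii) follow in the same routine way. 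These steps are purely formal.

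The points requiring genuine care are the dictionary between null-space conditions (which the injective theorem produces) and range conditions (which the surjective theorem asserts), needed for (iii) and (iv). The basic fact is that, since $\sum_{k=1}^{K}A_{k}$ has closed range by Theorem~\ref{T MPI expression sum}(ii), one has ${\rm R}\!\paren{\sum_{k=1}^{K}A_{k}}={\rm N}\!\paren{\paren{\sum_{k=1}^{K}A_{k}}^{*}}^{\perp}={\rm N}\!\paren{\sum_{k=1}^{K}A_{k}^{*}}^{\perp}$, whence ${\rm R}\!\paren{\sum_{k=1}^{K}A_{k}}=\mathcal{H}_{2}$ if and only if ${\rm N}\!\paren{\sum_{k=1}^{K}A_{k}^{*}}=\set{0}$. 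Applying this to part (iv) of the injective theorem for $\set{A_{k}^{*}}_{k=1}^{K}$ immediately gives (iv) (alternatively, (iv) is just Proposition~\ref{P MPI usando adjunta}(iii)).

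For (iii) the injective theorem applied to $\set{A_{k}^{*}}_{k=1}^{K}$ gives the equivalence ${\rm N}\!\paren{\sum_{k=1}^{K}A_{k}^{*}}=\set{0}$ if and only if ${\rm R}\!\paren{A_{k_{0}}}={\rm N}\!\paren{\sum_{k=1,k\neq k_{0}}^{K}A_{k}^{*}}$. The left-hand condition is rewritten as ${\rm R}\!\paren{\sum_{k=1}^{K}A_{k}}=\mathcal{H}_{2}$ by the fact just noted. For the right-hand condition I would use that both ${\rm R}\!\paren{A_{k_{0}}}$ and ${\rm R}\!\paren{\sum_{k=1,k\neq k_{0}}^{K}A_{k}}$ are closed and that ${\rm N}\!\paren{T^{*}}={\rm R}\!\paren{T}^{\perp}$ for closed-range $T$: writing $C=\sum_{k=1,k\neq k_{0}}^{K}A_{k}$, the equality ${\rm R}\!\paren{A_{k_{0}}}={\rm N}\!\paren{C^{*}}={\rm R}\!\paren{C}^{\perp}$ is, on taking orthogonal complements and using closedness, equivalent to ${\rm R}\!\paren{C}={\rm R}\!\paren{A_{k_{0}}}^{\perp}={\rm N}\!\paren{A_{k_{0}}^{*}}$, which is the asserted form in (iii). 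The main obstacle is thus this complement-swapping argument: one must invoke the closedness of both ranges (guaranteed by Theorem~\ref{T MPI expression sum}(ii) applied to the subfamily $\set{A_{k}}_{k\neq k_{0}}$) in order to pass freely between a subspace and its double orthogonal complement.
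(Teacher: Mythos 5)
Your proof is correct and follows exactly the route the paper itself indicates for this theorem: apply Theorem~\ref{T MPI expression sum injective} to $\set{A_{k}^{*}}_{k=1}^{K}$ and take adjoints, using the identity $\paren{A^{*}}^{\dag}=\paren{A^{\dag}}^{*}$ and the self-adjointness of $\sum_{k=1}^{K}A_{k}A_{k}^{*}$ for (i)--(ii), and the closed-range facts ${\rm N}\!\paren{T^{*}}={\rm R}\!\paren{T}^{\perp}$ and ${\rm R}\!\paren{T}^{\perp\perp}={\rm R}\!\paren{T}$ for (iii)--(iv). The details you supply beyond the paper's one-line indication (the symmetry of the hypotheses under adjoints, and the closedness of ${\rm R}\!\paren{\sum_{k\neq k_{0}}A_{k}}$ via Theorem~\ref{T MPI expression sum}(ii)) are precisely the ones needed to make that route rigorous.
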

We have the following immediate and useful consequence.
\begin{corollary}\label{C MPI expression sum surjective}
Let $\set{A_{k}}_{k=1}^{K} \subset
\mathcal{BC}\!\paren{\mathcal{H}_{1}, \mathcal{H}_{2}}$. Assume that
${\rm R}\!\paren{A_{k}} \subseteq {\rm N}\!\paren{A_{k'}^{*}}$ and
${\rm R}\!\paren{A_{k}^{*}} \subseteq {\rm N}\!\paren{A_{k'}}$ for
each $k, k' = 1, 2, \ldots$, $k \neq k'$. Let $k_{0} \in \set{1,
\ldots, K}$. If $\sum_{k=1}^{K}A_{k}$ is surjective, then
$\sum_{k=1}^{K}A_{k}A_{k}^{*}$ is invertible and $A_{k_{0}}^{\dag}$
is the unique solution of the operator equation
$X\paren{\sum_{k=1}^{K}A_{k}A_{k}^{*}}=A_{k_{0}}^{*}$.
\end{corollary}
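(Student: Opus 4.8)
The plan is to derive Corollary~\ref{C MPI expression sum surjective} as an immediate consequence of Theorem~\ref{T MPI expression sum surjective}, exactly in the way the two preceding corollaries were derived from their parent theorems. First I would invoke Theorem~\ref{T MPI expression sum surjective}(iv): under the standing orthogonality hypotheses, if $\sum_{k=1}^{K}A_{k}$ is surjective (equivalently ${\rm R}\!\paren{\sum_{k=1}^{K}A_{k}}=\mathcal{H}_{2}$), then $\sum_{k=1}^{K}A_{k}A_{k}^{*}$ is invertible. This settles the first assertion of the corollary with no further work.

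Next I would establish that $A_{k_{0}}^{\dag}$ solves the operator equation $X\paren{\sum_{k=1}^{K}A_{k}A_{k}^{*}}=A_{k_{0}}^{*}$. This is precisely the first equation in Theorem~\ref{T MPI expression sum surjective}(ii), namely $A_{k_{0}}^{\dag}\paren{\sum_{k=1}^{K}A_{k}A_{k}^{*}}=A_{k_{0}}^{*}$, so existence of a solution is immediate. For uniqueness, I would use the invertibility of $B:=\sum_{k=1}^{K}A_{k}A_{k}^{*}$ just obtained: if $X_{1}B=A_{k_{0}}^{*}=X_{2}B$, then $\paren{X_{1}-X_{2}}B=0$, and right-multiplying by the bounded inverse $B^{-1}$ gives $X_{1}-X_{2}=0$. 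Hence the solution is unique and equals $A_{k_{0}}^{\dag}$.

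There is no serious obstacle here; the work was all done in the parent theorem. The only point meriting a word of care is the logical structure: surjectivity of $\sum_{k=1}^{K}A_{k}$ is what triggers invertibility of $B$ via part (iv), and it is that invertibility — not surjectivity directly — that is needed for the uniqueness argument. I would therefore present the proof in the order (a) apply (iv) to get $B$ invertible, (b) read off the solution from (ii), (c) conclude uniqueness from the invertibility of $B$. Symmetrically, this corollary is the surjective counterpart of Corollary~\ref{C MPI expression sum injective}, and indeed it can alternatively be obtained by applying that corollary to $\set{A_{k}^{*}}_{k=1}^{K}$ and taking adjoints, which matches the remark preceding Theorem~\ref{T MPI expression sum surjective}; but the direct route above is the cleanest.
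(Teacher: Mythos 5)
Your proof is correct and follows essentially the same route as the paper, which presents this corollary as an ``immediate and useful consequence'' of Theorem~\ref{T MPI expression sum surjective}: invertibility of $\sum_{k=1}^{K}A_{k}A_{k}^{*}$ from part (iv), existence of the solution from the first equation in part (ii), and uniqueness from that invertibility. Your concluding remark that the result could alternatively be obtained by applying Corollary~\ref{C MPI expression sum injective} to $\set{A_{k}^{*}}_{k=1}^{K}$ and taking adjoints likewise mirrors the paper's own observation about how the surjective theorem relates to the injective one.
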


The next theorem address the case $\sum_{k=1}^{K}A_{k}$ invertible.

\begin{theorem}\label{T MPI expression sum invertible}
Let $\set{A_{k}}_{k=1}^{K} \subset
\mathcal{BC}\!\paren{\mathcal{H}_{1}, \mathcal{H}_{2}}$. Assume that
${\rm R}\!\paren{A_{k}} \subseteq {\rm N}\!\paren{A_{k'}^{*}}$ and
${\rm R}\!\paren{A_{k}^{*}} \subseteq {\rm N}\!\paren{A_{k'}}$ for
each $k, k' = 1, 2, \ldots$, $k \neq k'$. Let $k_{0} \in \set{1,
\ldots, K}$. If $\sum_{k=1}^{K}A_{k}$ is invertible, then
$A_{k_{0}}^{\dag}$ is the solution of any of the equations
$\paren{\sum_{k=1}^{K}A_{k}}X=P_{{\rm N}\paren{\sum_{k=1,k\neq
k_{0}}^{K}A_{k}^{*}}}$, $X\paren{\sum_{k=1}^{K}A_{k}}=P_{{\rm
N}\!\paren{\sum_{k=1,k\neq k_{0}}^{K}A_{k}}}$.
\end{theorem}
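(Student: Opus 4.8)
The plan is to write $A=\sum_{k=1}^{K}A_{k}$ and to reduce the statement to the identification of two orthogonal projections. Since $A$ is invertible, for any right-hand side $B$ the equation $AX=B$ has the unique solution $X=A^{-1}B$, and $XA=B$ has the unique solution $X=BA^{-1}$; hence it suffices to check that $X=A_{k_{0}}^{\dag}$ satisfies each of the two equations, and the uniqueness implicit in the word ``the'' in the statement is then automatic.

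I would start with the first equation. By Theorem~\ref{T MPI expression sum}(i) we have $A_{k}A_{k_{0}}^{\dag}=0$ for every $k\neq k_{0}$, so all cross terms vanish and $AA_{k_{0}}^{\dag}=A_{k_{0}}A_{k_{0}}^{\dag}=P_{{\rm R}(A_{k_{0}})}$, the last equality being the standard Moore--Penrose identity $A_{k_{0}}A_{k_{0}}^{\dag}=P_{{\rm R}(A_{k_{0}})}$. Thus everything comes down to proving the subspace equality ${\rm R}\paren{A_{k_{0}}}={\rm N}\paren{\sum_{k=1,k\neq k_{0}}^{K}A_{k}^{*}}$. To see this, I apply Theorem~\ref{T MPI expression sum}(iii) to the subfamily $\set{A_{k}}_{k\neq k_{0}}$, which still satisfies the orthogonality hypotheses, to get ${\rm N}\paren{\sum_{k=1,k\neq k_{0}}^{K}A_{k}^{*}}=\bigcap_{k\neq k_{0}}{\rm N}\paren{A_{k}^{*}}=\paren{\bigoplus_{k\neq k_{0}}{\rm R}\paren{A_{k}}}^{\perp}$, where the orthogonality of the direct sum comes from the hypothesis ${\rm R}\paren{A_{k}}\subseteq{\rm N}\paren{A_{k'}^{*}}$. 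On the other hand, invertibility of $A$ gives ${\rm R}\paren{A}=\mathcal{H}_{2}$, whence Theorem~\ref{T MPI expression sum}(ii) yields the orthogonal decomposition $\mathcal{H}_{2}={\rm R}\paren{A_{k_{0}}}\oplus\bigoplus_{k\neq k_{0}}{\rm R}\paren{A_{k}}$; taking orthogonal complements gives ${\rm R}\paren{A_{k_{0}}}=\paren{\bigoplus_{k\neq k_{0}}{\rm R}\paren{A_{k}}}^{\perp}$, which matches the previous expression. Hence $P_{{\rm R}(A_{k_{0}})}=P_{{\rm N}\paren{\sum_{k=1,k\neq k_{0}}^{K}A_{k}^{*}}}$, so $A_{k_{0}}^{\dag}$ solves the first equation.

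The second equation is handled by the mirror-image argument. Again by Theorem~\ref{T MPI expression sum}(i), $A_{k_{0}}^{\dag}A_{k}=0$ for $k\neq k_{0}$, so $A_{k_{0}}^{\dag}A=A_{k_{0}}^{\dag}A_{k_{0}}=P_{{\rm R}(A_{k_{0}}^{*})}$, and the required identity ${\rm R}\paren{A_{k_{0}}^{*}}={\rm N}\paren{\sum_{k=1,k\neq k_{0}}^{K}A_{k}}$ follows exactly as above, now replacing each $A_{k}$ by $A_{k}^{*}$ and using that $A^{*}=\sum_{k=1}^{K}A_{k}^{*}$ is invertible, so that ${\rm R}\paren{A^{*}}=\mathcal{H}_{1}$ and, by Theorem~\ref{T MPI expression sum}(ii), $\mathcal{H}_{1}=\bigoplus_{k=1}^{K}{\rm R}\paren{A_{k}^{*}}$. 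The only real content of the argument is this subspace identification: invertibility of $A$ is precisely what forces the orthogonal family $\set{{\rm R}\paren{A_{k}}}_{k=1}^{K}$ to exhaust $\mathcal{H}_{2}$ (and the adjoint family to exhaust $\mathcal{H}_{1}$), which is what collapses the orthogonal complement of the ``other'' ranges onto ${\rm R}\paren{A_{k_{0}}}$; everything else is bookkeeping with Theorem~\ref{T MPI expression sum} and the basic projection identities $A_{k_{0}}A_{k_{0}}^{\dag}=P_{{\rm R}(A_{k_{0}})}$ and $A_{k_{0}}^{\dag}A_{k_{0}}=P_{{\rm R}(A_{k_{0}}^{*})}$.
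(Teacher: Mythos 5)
Your proof is correct, but it runs along a genuinely different track than the paper's. The paper starts from Theorem~\ref{T MPI expression sum}(iv): it writes $A_{k_{0}}^{\dag}=\paren{\sum_{k=1}^{K}A_{k}}^{-1}-\sum_{k\neq k_{0}}A_{k}^{\dag}$, multiplies through by $\sum_{k=1}^{K}A_{k}$, kills the cross terms with part (i), identifies $\sum_{k\neq k_{0}}A_{k}A_{k}^{\dag}=\sum_{k\neq k_{0}}P_{{\rm R}\paren{A_{k}}}=P_{{\rm R}\paren{\sum_{k\neq k_{0}}A_{k}}}$ via part (ii), and finishes with the complement relation $I-P_{{\rm R}\paren{B}}=P_{{\rm N}\paren{B^{*}}}$; invertibility enters only through $\paren{\sum_{k}A_{k}}^{\dag}=\paren{\sum_{k}A_{k}}^{-1}$. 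You instead bypass part (iv) entirely: you keep $A_{k_{0}}^{\dag}$ whole, collapse $\paren{\sum_{k}A_{k}}A_{k_{0}}^{\dag}$ to $P_{{\rm R}\paren{A_{k_{0}}}}$ using only part (i), and then carry the whole weight of the argument in the subspace identity ${\rm R}\paren{A_{k_{0}}}={\rm N}\paren{\sum_{k\neq k_{0}}A_{k}^{*}}$, proved from part (iii) applied to the subfamily together with the exhaustion $\mathcal{H}_{2}=\bigoplus_{k=1}^{K}{\rm R}\paren{A_{k}}$ that invertibility forces via part (ii). This buys a few things: your argument is more self-contained (it needs only parts (i)--(iii), not the additivity of Moore--Penrose inverses), it makes transparent exactly where invertibility is used (the orthogonal ranges must fill the space), and it explicitly settles the uniqueness implicit in ``the solution,'' which the paper leaves tacit. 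It is worth noting that your key subspace identity is precisely Theorem~\ref{T MPI expression sum injective}(iii) applied to the adjoint family $\set{A_{k}^{*}}_{k=1}^{K}$ (whose sum is injective since $\sum_{k}A_{k}$ is invertible), so you could have shortened your argument by citing that instead of re-deriving it; conversely, the paper's route is the shorter one once part (iv) is available, and it has the side benefit of exhibiting $A_{k_{0}}^{\dag}$ directly in terms of $\paren{\sum_{k=1}^{K}A_{k}}^{-1}$.
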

\begin{proof}
Assume that $\sum_{k=1}^{K}A_{k}$ is invertible. From Theorem~\ref{T
MPI expression sum}(iv),
\[A_{k_{0}}^{\dag}=\paren{\sum_{k=1}^{K}A_{k}}^{\dag}-\sum_{k=1,k\neq
k_{0}}^{K}A_{k}^{\dag}=\paren{\sum_{k=1}^{K}A_{k}}^{-1}-\sum_{k=1,k\neq
k_{0}}^{K}A_{k}^{\dag}.\] From here,
\begin{align*}
\paren{\sum_{k=1}^{K}A_{k}}A_{k_{0}}^{\dag}&=I_{\mathcal{H}_{2}}-\paren{\sum_{k=1}^{K}A_{k}}\paren{\sum_{k=1,k\neq
k_{0}}^{K}A_{k}^{\dag}}\\&=I_{\mathcal{H}_{2}}-\sum_{k=1,k\neq
k_{0}}^{K}A_{k}A_{k}^{\dag}=I_{\mathcal{H}_{2}}-\sum_{k=1,k\neq
k_{0}}^{K}P_{{\rm R}\!\paren{A_{k}}}\\&=I_{\mathcal{H}_{2}}-P_{{\rm
R}\!\paren{\sum_{k=1,k\neq k_{0}}^{K}A_{k}}}= P_{{\rm
N}\!\paren{\sum_{k=1,k\neq k_{0}}^{K}A_{k}^{*}}}
\end{align*}

\noindent and

\begin{align*}
A_{k_{0}}^{\dag}\paren{\sum_{k=1}^{K}A_{k}}&=I_{\mathcal{H}_{1}}-\paren{\sum_{k=1,k\neq
k_{0}}^{K}A_{k}^{\dag}}\paren{\sum_{k=1}^{K}A_{k}}\\&=I_{\mathcal{H}_{1}}-\sum_{k=1,k\neq
k_{0}}^{K}A_{k}^{\dag}A_{k}=I_{\mathcal{H}_{2}}-\sum_{k=1,k\neq
k_{0}}^{K}P_{{\rm R}\!\paren{A_{k}^{*}}}\\&=
I_{\mathcal{H}_{2}}-P_{{\rm R}\!\paren{\sum_{k=1,k\neq
k_{0}}^{K}A_{k}^{*}}}=P_{{\rm N}\!\paren{\sum_{k=1,k\neq
k_{0}}^{K}A_{k}}}.
\end{align*}
\end{proof}

It is important to note that in Definition~\ref{D MPI} and in
Theorem~\ref{T MPI caracterizacion}, $A^{\dag}$ is characterized as
the solution of systems of equations, or of operators equations
where the solution must satisfy some restrictions on its null space
or on its range space, whereas Corollary~\ref{C MPI expression sum
injective}, Corollary~\ref{C MPI expression sum surjective} and
Theorem~\ref{T MPI expression sum invertible} give characterizations
of $A^{\dag}$ as the solution of single equations.

The next result says that under hypotheses similar to the used for
the previous characterizations, any series of operators, which
converges in the operator norm, is a finite sum.

\begin{proposition}
Let $\set{A_{k}}_{k=1}^{\infty} \subset
\mathcal{BC}\!\paren{\mathcal{H}_{1}, \mathcal{H}_{2}}$. Assume that
${\rm R}\!\paren{A_{k}} \subseteq {\rm N}\!\paren{A_{k'}^{*}}$ and
${\rm R}\!\paren{A_{k}^{*}} \subseteq {\rm N}\!\paren{A_{k'}}$ for
each $k, k' = 1, 2, \ldots$, $k \neq k'$. Assume that the series
$\sum_{k=1}^{\infty}A_{k}$ converges in the operator norm. If ${\rm
N}\!\paren{\sum_{k=1}^{\infty}A_{k}}=\set{0}$ and ${\rm
R}\!\paren{\sum_{k=1}^{\infty}A_{k}}$ is closed, or ${\rm
R}\!\paren{\sum_{k=1}^{\infty}A_{k}}=\mathcal{H}_{2}$, then there
exists $K_{0} \in \mathbb{N}$ such that $A_{k}=0$ for each $k >
K_{0}$.
\end{proposition}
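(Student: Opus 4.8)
The plan is to show that under either hypothesis the operator $A:=\sum_{k=1}^{\infty}A_{k}$ has closed range, to deduce a uniform lower bound $\norm{Af}\geq c\norm{f}$ with some $c>0$ valid on each corange ${\rm R}\!\paren{A_{k}^{*}}$, and then to combine this with the elementary fact that $\norm{A_{k}}\to 0$ to conclude that $A_{k}=0$ for all large $k$. Throughout I use that norm convergence of the series gives $Af=\sum_{k=1}^{\infty}A_{k}f$ pointwise, and the orthogonal decompositions recalled in the introduction, so that ${\rm R}\!\paren{A_{k}^{*}}={\rm N}\!\paren{A_{k}}^{\perp}$ and ${\rm N}\!\paren{A_{k'}^{*}}={\rm R}\!\paren{A_{k'}}^{\perp}$.

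First I would extract two consequences of the orthogonality hypotheses. The assumption ${\rm R}\!\paren{A_{k}}\subseteq{\rm N}\!\paren{A_{k'}^{*}}={\rm R}\!\paren{A_{k'}}^{\perp}$ for $k\neq k'$ says the ranges ${\rm R}\!\paren{A_{k}}$ are mutually orthogonal in $\mathcal{H}_{2}$, and the assumption ${\rm R}\!\paren{A_{k}^{*}}\subseteq{\rm N}\!\paren{A_{k'}}$ for $k\neq k'$ gives: (a) for $f\in{\rm R}\!\paren{A_{j}^{*}}$ one has $A_{k}f=0$ for every $k\neq j$, hence $Af=A_{j}f$; and (b) if $f\in{\rm N}\!\paren{A}$, then $0=Af=\sum_{k}A_{k}f$ with the summands $A_{k}f\in{\rm R}\!\paren{A_{k}}$ mutually orthogonal, so $A_{k}f=0$ for every $k$, i.e. $f\in\bigcap_{k}{\rm N}\!\paren{A_{k}}$. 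In particular $f\perp{\rm R}\!\paren{A_{j}^{*}}$ for every $j$, which yields ${\rm R}\!\paren{A_{j}^{*}}\subseteq{\rm N}\!\paren{A}^{\perp}$ for each $j$.

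Next I would observe that under each of the two hypotheses ${\rm R}\!\paren{A}$ is closed: it is assumed directly in the first case, and equals $\mathcal{H}_{2}$ in the second. Therefore the restriction $A\vert_{{\rm N}(A)^{\perp}}$ is a bounded bijection onto the Hilbert space ${\rm R}\!\paren{A}$, and by the bounded inverse theorem there is $c>0$ with $\norm{Af}\geq c\norm{f}$ for all $f\in{\rm N}\!\paren{A}^{\perp}$. By (b) this bound holds on each ${\rm R}\!\paren{A_{j}^{*}}$, and together with (a) it gives $\norm{A_{j}f}=\norm{Af}\geq c\norm{f}$ for every $f\in{\rm R}\!\paren{A_{j}^{*}}$. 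Since $A_{j}$ vanishes on ${\rm N}\!\paren{A_{j}}={\rm R}\!\paren{A_{j}^{*}}^{\perp}$, its norm is attained on ${\rm R}\!\paren{A_{j}^{*}}$; hence whenever $A_{j}\neq 0$ we have ${\rm R}\!\paren{A_{j}^{*}}\neq\set{0}$ and consequently $\norm{A_{j}}\geq c$.

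Finally, because $\sum_{k=1}^{\infty}A_{k}$ converges in operator norm, its general term satisfies $\norm{A_{k}}\to 0$. Choosing $K_{0}$ so that $\norm{A_{k}}<c$ for all $k>K_{0}$ then forces $A_{k}=0$ for all $k>K_{0}$, which is the assertion. The only non-routine step is obtaining the single constant $c$: the crux is recognizing that both apparently different hypotheses guarantee that ${\rm R}\!\paren{A}$ is closed, and that the orthogonality relations then transfer the resulting lower bound for $A$ on ${\rm N}\!\paren{A}^{\perp}$ simultaneously to all the coranges ${\rm R}\!\paren{A_{j}^{*}}$; once $c$ is in hand, everything else is bookkeeping.
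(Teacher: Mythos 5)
Your proof is correct, and it takes a genuinely different route from the paper's. The paper argues through Moore--Penrose machinery: writing $T=\sum_{k=1}^{\infty}A_{k}$, it notes that $T^{\dag}$ exists, expresses each partial sum as a perturbation $\sum_{k=1}^{K}A_{k}=T-\sum_{k=K+1}^{\infty}A_{k}$ of $T$ by a tail, proves the inclusion ${\rm R}\!\paren{\sum_{k=K+1}^{\infty}A_{k}} \subseteq {\rm R}\!\paren{T}$, chooses $K_{0}$ so that $\norm{T^{\dag}\sum_{k=K+1}^{\infty}A_{k}}<1$ for all $K\geq K_{0}$, and then invokes the perturbation result \cite[Lemma 3.3]{Ding-Huang (1994)} together with Theorem~\ref{T MPI expression sum}(ii) to get $\bigoplus_{k=1}^{K}{\rm R}\!\paren{A_{k}}={\rm R}\!\paren{\sum_{k=1}^{K}A_{k}}={\rm R}\!\paren{T}$ for all $K\geq K_{0}$; since the right-hand side does not depend on $K$, the orthogonal summands ${\rm R}\!\paren{A_{k}}$ with $k>K_{0}$ must vanish, and the surjective case is handled separately by passing to adjoints. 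You avoid all of this: you use the elementary equivalence between closedness of ${\rm R}\!\paren{T}$ and the lower bound $\norm{Tf}\geq c\norm{f}$ on ${\rm N}\!\paren{T}^{\perp}$, transfer that bound to every ${\rm R}\!\paren{A_{j}^{*}}$ through your two orthogonality observations (that $Tf=A_{j}f$ there, and that ${\rm R}\!\paren{A_{j}^{*}}\subseteq{\rm N}\!\paren{T}^{\perp}$, the latter via the Pythagorean argument giving ${\rm N}\!\paren{T}\subseteq\bigcap_{k}{\rm N}\!\paren{A_{k}}$), conclude the uniform gap $\norm{A_{j}}\geq c$ whenever $A_{j}\neq 0$, and finish with the fact that the terms of a norm-convergent series tend to zero. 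Your argument is more elementary and self-contained --- no Moore--Penrose inverse, no external perturbation lemma, no adjoint reduction --- and it treats the two alternative hypotheses uniformly: you only ever use that ${\rm R}\!\paren{T}$ is closed, so you in fact prove the slightly stronger statement that closedness of the range alone forces all but finitely many $A_{k}$ to vanish. What the paper's route buys in return is structural information, namely the stabilization of the partial-sum ranges at ${\rm R}\!\paren{T}$ for large $K$, and it stays inside the toolkit of Moore--Penrose identities that the paper develops and reuses elsewhere.
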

\begin{proof}
Assume that ${\rm N}\!\paren{\sum_{k=1}^{\infty}A_{k}}=\set{0}$ and
${\rm R}\!\paren{\sum_{k=1}^{\infty}A_{k}}$ is closed. Then
$\paren{\sum_{k=1}^{\infty}A_{k}}^{\dag}$ exists.

Let $K \in \mathbb{N}$. We have
\begin{equation}\label{E MPI series N 1}
\sum_{k=1}^{K}A_{k}=\sum_{k=1}^{\infty}A_{k}-\sum_{k=K+1}^{\infty}A_{k}.
\end{equation}

Let $g \in {\rm R}\paren{\sum_{k=K+1}^{\infty}A_{k}}$. There exists
$f \in \mathcal{H}_{1}$ such that
\[g=\sum_{k=K+1}^{\infty}A_{k}f=\sum_{k=K+1}^{\infty}A_{k}P_{\overline{{\rm
R}\paren{\sum_{k=K+1}^{\infty}A_{k}^{*}}}}f.\] Since $\overline{{\rm
R}\paren{\sum_{k=K+1}^{\infty}A_{k}^{*}}} \subseteq {\rm
N}\paren{\sum_{k=1}^{K}A_{k}}$,
\[g=\sum_{k=1}^{\infty}A_{k}P_{\overline{{\rm
R}\paren{\sum_{k=K+1}^{\infty}A_{k}^{*}}}}f.\] This shows that
\begin{equation}\label{E MPI series N 2}
{\rm R}\paren{\sum_{k=K+1}^{\infty}A_{k}} \subseteq {\rm
R}\paren{\sum_{k=1}^{\infty}A_{k}},
\end{equation}
for each $K \in \mathbb{N}$.

Let $K_{0} \in \mathbb{N}$ be such that
\[\norm{\paren{\sum_{k=1}^{\infty}A_{k}}^{\dag}\sum_{k=K+1}^{\infty}A_{k}}
< 1\] for each $K \geq K_{0}$. By Theorem~\ref{T MPI expression
sum}(ii), (\ref{E MPI series N 1}), (\ref{E MPI series N 2}) and
\cite[Lemma 3.3]{Ding-Huang (1994)}, if $K \geq K_{0}$, then
\[\bigoplus_{k=1}^{K}{\rm R}\!\paren{A_{k}}={\rm R}\!\paren{\sum_{k=1}^{K}A_{k}}={\rm
  R}\!\paren{\sum_{k=1}^{\infty}A_{k}}.\]

This implies that ${\rm R}\!\paren{A_{k}}=\set{0}$ for each $k >
K_{0}$.

Similarly, if ${\rm
R}\!\paren{\sum_{k=1}^{\infty}A_{k}}=\mathcal{H}_{2}$, we can apply
the previous reasoning to $\set{A_{k}^{*}}_{k=1}^{\infty}$ to
conclude that there exists $K_{0} \in \mathbb{N}$ such that
$A_{k}=0$ for each $k
> K_{0}$.
\end{proof}

\section{Formulations for matrices}\label{S Formulations for matrices}

In this section, we specialize previous results for finite matrices
and relate them with others in the literature. The elements of
$\mathbb{F}^{n}$ will be consider as column vectors, and if $x \in
\mathbb{F}^{n}$ then $x(i)$ denotes the $i$th component of $x$. The
elements of the standard basis of $\mathbb{F}^{n}$ will be denoted
by $e_{1}$, ..., $e_{n}$. We denote the vector with all its
components equal to $1$ with $e$. The set of $m \times n$ matrices
over $\mathbb{F}$ is denoted by $\mathcal{M}_{m,n}$. If $m=n$ we
write $\mathcal{M}_{n}$. If $A \in \mathcal{M}_{m,n}$, we denote the
entry $i,j$, the $i$th row and the $j$th column of $A$ with
$A(i,j)$, $A(i,:)$ and $A(:,j)$, respectively. We note that if $x, y
\in\mathbb{F}^{n}$, $x \neq 0$ and $y \neq 0$, then
$\paren{xy^{*}}^{\dag}=\frac{1}{\norm{x}^{2}\norm{y}^{2}}yx^{*}$.

We begin noting that considering $\set{A_{k}}_{k=1}^{K} \subset
\mathcal{M}_{m,n}$, Theorem~\ref{T MPI expression sum}(iv) is
\cite[Lemma 1.7]{Penrose (1955)}. For the case case $K=2$, there are
various papers that generalize \cite[Lemma 1.7]{Penrose (1955)}
considering weaker hypotheses. For example, if $A_{1}A_{2}^{*}=0$
and $C=\paren{I-A_{1}A_{1}^{\dag}}A_{2}$, \cite[Theorem 2]{Cline
(1965)} expresses $\paren{A_{1}+A_{2}}^{\dag}$ in terms of $A_{1}$,
$A_{2}$, $A_{1}^{*}$, $A_{2}^{*}$, $C$ and their Moore--Penrose
inverses. We can also mention \cite[Theorem 1]{Hung-Markham (1977)}
which generalize \cite[Theorem 2]{Cline (1965)} and expresses
$\paren{A_{1}+A_{2}}^{\dag}$ in terms of $A_{1}$, $A_{2}$,
$A_{1}^{*}$, $A_{2}^{*}$, other matrices and their Moore--Penrose
inverses. In particular, by \cite[Theorem 3]{Fill-Fishkind (1999)},
if $A_{1}, A_{2} \in \mathcal{M}_{n}$ and ${\rm
rank}\!\paren{A_{1}+A_{2}}={\rm rank}\!\paren{A_{1}}+{\rm
rank}\!\paren{A_{2}}$, then
\begin{equation}\label{E Fill-Fishkind}
\paren{A_{1}+A_{2}}^{\dag}=\paren{I-\paren{P_{{\rm R}\!\paren{A_{2}^{*}}}P_{{\rm
N}\!\paren{A_{1}}}}^{\dag}}A_{1}^{\dag}\paren{I-\paren{P_{{\rm
N}\!\paren{A_{1}^{*}}}P_{{\rm
R}\!\paren{A_{2}}}}^{\dag}}+\paren{P_{{\rm
R}\!\paren{A_{2}^{*}}}P_{{\rm
N}\!\paren{A_{1}}}}^{\dag}A_{2}^{\dag}\paren{P_{{\rm
N}\!\paren{A_{1}^{*}}}P_{{\rm R}\!\paren{A_{2}}}}^{\dag}.
\end{equation}
Equality (\ref{E Fill-Fishkind}) was proved for operators $A_{1},
A_{2} \in \mathcal{BC}\!\paren{\mathcal{H}_{1}, \mathcal{H}_{2}}$
such that ${\rm R}\!\paren{A_{1}} \cap {\rm R}\!\paren{A_{2}}={\rm
R}\!\paren{A_{1}^{*}} \cap {\rm R}\!\paren{A_{2}^{*}}=\set{0}$,
${\rm R}\!\paren{A_{1}+A_{2}}={\rm R}\!\paren{A_{1}}+{\rm
R}\!\paren{A_{2}}$ and ${\rm R}\!\paren{A_{1}^{*}+A_{2}^{*}}={\rm
R}\!\paren{A_{1}^{*}}+{\rm R}\!\paren{A_{2}^{*}}$ (see \cite[Theorem
5.2]{Arias-Corach-Maestripieri (1999)}). Note that if $A_{1}, A_{2}
\in \mathcal{BC}\!\paren{\mathcal{H}_{1}, \mathcal{H}_{2}}$ satisfy
the hypotheses of Theorem~\ref{T MPI expression sum}, they also
satisfy the hypotheses of \cite[Theorem
5.2]{Arias-Corach-Maestripieri (1999)} and from (\ref{E
Fill-Fishkind}) we get
$\paren{A_{1}+A_{2}}^{\dag}=A_{1}^{\dag}+A_{2}^{\dag}$. In
\cite{Sivakumar (2020), Baksalary-Sivakumar-Trenkler (2022)} are
considered sufficient conditions independent of the conditions of
\cite[Lemma 1.7]{Penrose (1955)} to have
$\paren{A_{1}+A_{2}}^{\dag}=A_{1}^{\dag}+A_{2}^{\dag}$ (see also
Remark~\ref{R Theorem 3.2 Sivakumar (2020) conditions not
necessary}). For $K$ arbitrary, we have \cite{Tian (1998), Tian
(2001), Tian (2005)} where $\paren{\sum_{k=1}^{K}A_{k}}^{\dag}$ is
expressed in terms of the Moore--Penrose inverse of block circulant
matrices.

The following particular case of Theorem~\ref{T MPI expression sum}
permits us to compute $A^{\dag}$ using the equality
$A^{\dag}=\paren{A+B}^{\dag}-B^{\dag}$ having $B^{\dag}$ an explicit
expression.
\begin{theorem}\label{T MPI expression}
Let $A \in \mathcal{M}_{m,n}$, $r = {\rm rank}\!\paren{A}$, $q =
{\rm min}\{m,n\}$ and $r \leq q' \leq q$. Let $\set{f_{1}, \ldots,
f_{q'-r}}$ be an orthonomal subset of ${\rm N}\!\paren{A}$,
$\set{g_{1}, \ldots, g_{q'-r}}$ be an orthonomal subset of ${\rm
N}\!\paren{A^{*}}$. Let $\set{d_{k}}_{k=1}^{q'-r} \subset
\mathbb{F}\setminus \set{0}$. Then
\begin{equation}
A^{\dag}=\paren{A+\sum_{k=1}^{q'-r}d_{k}g_{k}f_{k}^{*}}^{\dag}-\sum_{k=1}^{q'-r}\frac{1}{d_{k}}f_{k}g_{k}^{*}.
\end{equation}
\end{theorem}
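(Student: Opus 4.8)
The plan is to apply Theorem~\ref{T MPI expression sum} with $K = q'-r+1$, taking $A_{k_0} = A$ as one of the summands and the rank-one operators $d_k g_k f_k^*$ as the others. To make this precise, I would set $A_0 = A$ and $A_k = d_k g_k f_k^*$ for $k = 1, \ldots, q'-r$, so that $A + \sum_{k=1}^{q'-r} d_k g_k f_k^* = \sum_{k=0}^{q'-r} A_k$. The whole proof then reduces to verifying that this collection satisfies the orthogonality hypotheses of Theorem~\ref{T MPI expression sum}, after which parts~(iv) of that theorem together with the elementary rank-one formula $\paren{xy^*}^{\dag} = \frac{1}{\norm{x}^2 \norm{y}^2} y x^*$ quoted just before the statement deliver the result.

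First I would record the range and null-space data of the pieces. For the rank-one operator $A_k = d_k g_k f_k^*$ we have ${\rm R}\!\paren{A_k} = \mathbb{F} g_k$ and ${\rm R}\!\paren{A_k^*} = \mathbb{F} f_k$, while ${\rm N}\!\paren{A_k} = \set{f_k}^{\perp}$ and ${\rm N}\!\paren{A_k^*} = \set{g_k}^{\perp}$. The hypotheses I must check are ${\rm R}\!\paren{A_k} \subseteq {\rm N}\!\paren{A_{k'}^*}$ and ${\rm R}\!\paren{A_k^*} \subseteq {\rm N}\!\paren{A_{k'}}$ for all distinct pairs. For two distinct rank-one pieces $A_k, A_{k'}$ with $k, k' \geq 1$, the first inclusion reads $g_k \perp g_{k'}$ and the second reads $f_k \perp f_{k'}$; these hold because $\set{g_k}$ and $\set{f_k}$ are orthonormal sets. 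For a rank-one piece against $A_0 = A$, the inclusions read ${\rm R}\!\paren{A_k} = \mathbb{F} g_k \subseteq {\rm N}\!\paren{A^*}$ and ${\rm R}\!\paren{A_k^*} = \mathbb{F} f_k \subseteq {\rm N}\!\paren{A}$, which hold precisely because the $g_k$ were chosen in ${\rm N}\!\paren{A^*}$ and the $f_k$ in ${\rm N}\!\paren{A}$. Symmetrically, ${\rm R}\!\paren{A^*} \subseteq {\rm N}\!\paren{A_k} = \set{f_k}^{\perp}$ holds since ${\rm R}\!\paren{A^*} = {\rm N}\!\paren{A}^{\perp} \ni$ is orthogonal to $f_k \in {\rm N}\!\paren{A}$, and ${\rm R}\!\paren{A} \subseteq {\rm N}\!\paren{A_k^*} = \set{g_k}^{\perp}$ holds since ${\rm R}\!\paren{A} = {\rm N}\!\paren{A^*}^{\perp}$ is orthogonal to $g_k \in {\rm N}\!\paren{A^*}$.

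With all hypotheses verified, Theorem~\ref{T MPI expression sum}(iv) gives
\[\paren{\sum_{k=0}^{q'-r}A_k}^{\dag} = \sum_{k=0}^{q'-r} A_k^{\dag} = A^{\dag} + \sum_{k=1}^{q'-r} \paren{d_k g_k f_k^*}^{\dag}.\]
Using the rank-one formula with $x = d_k g_k$ and $y = f_k$ (both unit vectors up to the scalar $d_k$), I would compute $\paren{d_k g_k f_k^*}^{\dag} = \frac{1}{\abs{d_k}^2} \frac{1}{\norm{g_k}^2 \norm{f_k}^2} \overline{d_k} f_k g_k^* = \frac{1}{d_k} f_k g_k^*$, since $\norm{f_k} = \norm{g_k} = 1$ and $\frac{\overline{d_k}}{\abs{d_k}^2} = \frac{1}{d_k}$. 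Substituting and rearranging yields $A^{\dag} = \paren{A + \sum_{k=1}^{q'-r} d_k g_k f_k^*}^{\dag} - \sum_{k=1}^{q'-r} \frac{1}{d_k} f_k g_k^*$, which is the claimed equality.

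I do not anticipate a genuine obstacle here: the content is entirely in the bookkeeping of the orthogonality conditions, and once those are matched to the orthonormality of the chosen sets and their placement in ${\rm N}\!\paren{A}$ and ${\rm N}\!\paren{A^*}$, the conclusion is immediate from the already-established Theorem~\ref{T MPI expression sum}. The one point deserving care is the conjugate in the scalar $d_k$ when $\mathbb{F} = \mathbb{C}$, so that the adjoint of $d_k g_k f_k^*$ is $\overline{d_k} f_k g_k^*$ and the Moore--Penrose inverse produces $\frac{1}{d_k}$ rather than $\frac{1}{\overline{d_k}}$; I would make sure the rank-one computation is carried out with this conjugation explicit.
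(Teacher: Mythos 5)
Your proof is correct and follows exactly the route the paper intends: the paper states this theorem as a ``particular case of Theorem~\ref{T MPI expression sum}'' (with $A$ and the rank-one operators $d_{k}g_{k}f_{k}^{*}$ as summands) and leaves the verification implicit, which is precisely the bookkeeping you carry out, including the correct handling of the conjugate in $\paren{d_{k}g_{k}f_{k}^{*}}^{\dag}=\frac{1}{d_{k}}f_{k}g_{k}^{*}$. No gaps; your write-up simply makes explicit what the paper omits.
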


As a consequence of Theorem~\ref{T MPI expression}, we get the
following result which can be viewed as a particular case of
Theorems~\ref{T MPI expression sum injective}(i) and~\ref{T MPI
expression sum surjective}(i):
\begin{corollary}\label{C MPI expression full rank}
Let $A \in \mathcal{M}_{m,n}$, $r = {\rm rank}\!\paren{A}$ and $q =
{\rm min}\{m,n\}$. Let $\set{f_{1}, \ldots, f_{q-r}}$ be an
orthonomal subset of ${\rm N}\!\paren{A}$ and $\set{g_{1}, \ldots,
g_{q-r}}$ be an orthonomal subset of ${\rm N}\!\paren{A^{*}}$. Let
$\set{d_{k}}_{k=1}^{q-r} \subset \mathbb{F}\setminus \set{0}$. Then:
\begin{enumerate}
 \item[(i)] If $m \geq n$, then $A^{\dag}=(A^{*}A+\sum_{k=1}^{q-r}d_{k}^{2}f_{k}f_{k}^{*})^{-1}(A^{*}+\sum_{k=1}^{q-r}d_{k}f_{k}g_{k}^{*})-\sum_{k=1}^{q-r}\frac{1}{d_{k}}f_{k}g_{k}^{*}$.
 \item[(ii)] If $n \geq m$, then $A^{\dag}=(A^{*}+\sum_{k=1}^{q-r}d_{k}f_{k}g_{k}^{*})(AA^{*}+\sum_{k=1}^{q-r}d_{k}^{2}g_{k}g_{k}^{*})^{-1}-\sum_{k=1}^{q-r}\frac{1}{d_{k}}f_{k}g_{k}^{*}$.
 \item[(iii)] If $m=n$, then $A^{\dag}=(A+\sum_{k=1}^{q-r}d_{k}g_{k}f_{k}^{*})^{-1}-\sum_{k=1}^{q-r}\frac{1}{d_{k}}f_{k}g_{k}^{*}$.
\end{enumerate}
\end{corollary}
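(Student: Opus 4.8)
The plan is to apply Theorem~\ref{T MPI expression} with the maximal choice $q'=q$, so that $q'-r=q-r$, and then to identify $\paren{A+B}^{\dag}$ explicitly, where I set $B:=\sum_{k=1}^{q-r}d_{k}g_{k}f_{k}^{*}$. Theorem~\ref{T MPI expression} gives at once $A^{\dag}=\paren{A+B}^{\dag}-\sum_{k=1}^{q-r}\frac{1}{d_{k}}f_{k}g_{k}^{*}$, and this trailing term already coincides with the subtracted term in (i)--(iii). Hence everything reduces to computing $\paren{A+B}^{\dag}$ in the three rank regimes, and the whole statement becomes a matter of showing that $A+B$ is injective when $m\geq n$, surjective when $n\geq m$, and invertible when $m=n$, followed by Proposition~\ref{P MPI usando adjunta}.

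First I would record the kernel and range of $A+B$. Writing $A_{0}:=A$ and $A_{k}:=d_{k}g_{k}f_{k}^{*}$ for $1\leq k\leq q-r$, the orthonormality of $\set{f_{k}}$ inside ${\rm N}\!\paren{A}$ and of $\set{g_{k}}$ inside ${\rm N}\!\paren{A^{*}}$ shows that $\set{A_{k}}_{k=0}^{q-r}$ satisfies the hypotheses of Theorem~\ref{T MPI expression sum} (this is the same verification that underlies Theorem~\ref{T MPI expression}). By Theorem~\ref{T MPI expression sum}(iii), ${\rm N}\!\paren{A+B}={\rm N}\!\paren{A}\cap\bigcap_{k=1}^{q-r}\set{f_{k}}^{\perp}$. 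The decisive point is that with $q'=q$ and $m\geq n$ one has $q-r=n-r=\dim{\rm N}\!\paren{A}$, so $\set{f_{1},\ldots,f_{q-r}}$ is actually an orthonormal \emph{basis} of ${\rm N}\!\paren{A}$; thus ${\rm N}\!\paren{A}\cap\set{f_{1},\ldots,f_{q-r}}^{\perp}=\set{0}$ and $A+B$ is injective. Dually, by Theorem~\ref{T MPI expression sum}(ii), ${\rm R}\!\paren{A+B}={\rm R}\!\paren{A}\oplus{\rm span}\set{g_{1},\ldots,g_{q-r}}$, and when $n\geq m$ the set $\set{g_{k}}$ is a basis of ${\rm N}\!\paren{A^{*}}$, so this range equals ${\rm R}\!\paren{A}\oplus{\rm N}\!\paren{A^{*}}=\mathbb{F}^{m}$ and $A+B$ is surjective. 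When $m=n$ both hold, so $A+B$ is invertible and $\paren{A+B}^{\dag}=\paren{A+B}^{-1}$, which is exactly (iii).

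For (i) and (ii) I would then invoke Proposition~\ref{P MPI usando adjunta}(ii),(iii). When $m\geq n$, injectivity gives $\paren{A+B}^{\dag}=\paren{\paren{A+B}^{*}\paren{A+B}}^{-1}\paren{A+B}^{*}$, and the products collapse by orthonormality: since $A^{*}g_{j}=0$ and $g_{k}^{*}A=0$ the cross terms vanish, and $g_{k}^{*}g_{j}=\delta_{kj}$ leaves $\paren{A+B}^{*}\paren{A+B}=A^{*}A+\sum_{k=1}^{q-r}\abs{d_{k}}^{2}f_{k}f_{k}^{*}$, while $\paren{A+B}^{*}=A^{*}+\sum_{k=1}^{q-r}\overline{d_{k}}f_{k}g_{k}^{*}$; substituting yields (i). Case (ii) is symmetric, using $\paren{A+B}^{\dag}=\paren{A+B}^{*}\paren{\paren{A+B}\paren{A+B}^{*}}^{-1}$ together with $\paren{A+B}\paren{A+B}^{*}=AA^{*}+\sum_{k=1}^{q-r}\abs{d_{k}}^{2}g_{k}g_{k}^{*}$. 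Over $\mathbb{F}=\mathbb{R}$ these read verbatim as stated, with $\abs{d_{k}}^{2}=d_{k}^{2}$ and $\overline{d_{k}}=d_{k}$; over $\mathbb{C}$ one simply carries the conjugates.

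The one genuinely load-bearing step will be the injectivity/surjectivity/invertibility claim, and the reason it goes through is precisely the passage from the free parameter $q'$ of Theorem~\ref{T MPI expression} to the maximal value $q'=q$: only then do $\set{f_{k}}$ and $\set{g_{k}}$ exhaust ${\rm N}\!\paren{A}$ and ${\rm N}\!\paren{A^{*}}$, which is what forces the kernel or cokernel to collapse. An equally short alternative, matching the observation that (i) and (ii) are particular cases of Theorems~\ref{T MPI expression sum injective}(i) and~\ref{T MPI expression sum surjective}(i), is to apply those results directly with $k_{0}=0$: there $\sum_{k}A_{k}^{*}A_{k}=A^{*}A+\sum_{k}\abs{d_{k}}^{2}f_{k}f_{k}^{*}$ is invertible by their part (iv), $\sum_{k}A_{k}^{*}=A^{*}+\sum_{k}\overline{d_{k}}f_{k}g_{k}^{*}$, and $A_{k}^{\dag}=\frac{1}{d_{k}}f_{k}g_{k}^{*}$ by the rank-one formula $\paren{xy^{*}}^{\dag}=\frac{1}{\norm{x}^{2}\norm{y}^{2}}yx^{*}$ recorded above, reproducing (i) and (ii) at once.
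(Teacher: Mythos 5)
Your proposal is correct, and it is essentially the paper's own (implicit) proof: the paper states this corollary without a separate argument, presenting it as a consequence of Theorem~\ref{T MPI expression} and ``a particular case of Theorems~\ref{T MPI expression sum injective}(i) and~\ref{T MPI expression sum surjective}(i)'' --- exactly your two routes. You supply the details the paper omits, namely the dimension count showing that with $q'=q$ the sets $\set{f_{k}}$ and $\set{g_{k}}$ are orthonormal \emph{bases} of ${\rm N}\!\paren{A}$ and ${\rm N}\!\paren{A^{*}}$, hence $A+B$ is injective, surjective, or invertible in the three regimes.

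One caveat deserves more than your closing remark that ``over $\mathbb{C}$ one simply carries the conjugates.'' What your derivation proves in cases (i)--(ii) is the identity with $\abs{d_{k}}^{2}$ and $\overline{d_{k}}$, whereas the statement is printed with $d_{k}^{2}$ and $d_{k}$; for non-real $d_{k}\in\mathbb{C}$ these are different matrices (e.g.\ $\abs{d_{k}}^{2}>0$ while $d_{k}^{2}$ need not even be real), and no relabeling of the data $\paren{d_{k},f_{k},g_{k}}$ converts one identity into the other. So, strictly, your proof covers the literal statement only for real $d_{k}$. Curiously, the unconjugated version as printed is \emph{also} true for complex $d_{k}$, but it requires a separate argument that neither you nor the paper's cited machinery gives: the matrix $M=A^{*}A+\sum_{k}d_{k}^{2}f_{k}f_{k}^{*}$ preserves the decomposition $\mathbb{F}^{n}={\rm R}\!\paren{A^{*}}\oplus{\rm N}\!\paren{A}$, acting as $A^{*}A$ on the first summand and with $Mf_{k}=d_{k}^{2}f_{k}$ on the second, so $M$ is invertible with $M^{-1}A^{*}=A^{\dag}$ and $M^{-1}f_{k}=d_{k}^{-2}f_{k}$, and (i) follows by expanding $M^{-1}\paren{A^{*}+\sum_{k}d_{k}f_{k}g_{k}^{*}}-\sum_{k}\frac{1}{d_{k}}f_{k}g_{k}^{*}$ directly (similarly for (ii)). Since the paper's own implicit derivation has the same conjugation slip, this is best viewed as an imprecision of the statement rather than a defect peculiar to your proof, but as written your argument does not establish the stated formulas for non-real $d_{k}$.
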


\begin{remark}\label{R results literature}
{\rm From Corollary~\ref{C MPI expression full rank}(iii), we obtain
results that appear in the literature for the case $m=n$. First, we
note that the result of Corollary~\ref{C MPI expression full
rank}(iii) appears in \cite[Exercise 53]{Ben-Israel-Greville
(2003)}. The proof given there consists in a verification of the
equality
$\paren{A+\sum_{k=1}^{q-r}d_{k}g_{k}f_{k}^{*}}\paren{A^{\dag}+\sum_{k=1}^{q-r}\frac{1}{d_{k}}f_{k}g_{k}^{*}}=I$.

If $A \in \mathcal{M}_{n}$ is a normal matrix and $d_{k}=1$ for each
$k=1, \ldots, q-r$, from Corollary~\ref{C MPI expression full
rank}(iii) we obtain \cite[Theorem 1]{Bell (1981)} which is used to
give expressions for the Moore--Penrose inverses of circulant
matrices. The proof given in \cite{Bell (1981)} is based on the
spectral decomposition of $A$.

If $A \in \mathcal{M}_{n}$ is symmetric, $r=n-1$, $d_{1}=\alpha n$,
$\alpha \neq 0$ and $f_{1}=g_{1}=\frac{1}{\sqrt{n}}e \in {\rm
N}\!\paren{A}$, from Corollary~\ref{C MPI expression full rank}(iii)
we obtain \cite[Theorem 2.1]{Plonka-Hoffmanny-Weickertz (2016)}. In
\cite{Plonka-Hoffmanny-Weickertz (2016)}, the authors first prove
that $A+ \alpha ee^{t}$ is nonsingular by showing that all its
eigenvalues are nonzero. Then, they prove that $X=\paren{A+ \alpha
ee^{t}}^{-1}-\frac{1}{\alpha n^{2}}ee^{t}$ verifies the equations in
Definition~\ref{D MPI} and consequently $X=A^{\dag}$.}
\end{remark}

In the finite dimensional case we can get a result similar to
Theorem~\ref{T MPI expression sum injective}, Theorem~\ref{T MPI
expression sum surjective} and Theorem~\ref{T MPI expression sum
invertible} but with weaker assumptions.
\begin{theorem}\label{T MPI expression full rank comp1}
Let $A, B \in \mathcal{M}_{m,n}$. Then:
\begin{enumerate}
 \item[(i)] If ${\rm R}\!\paren{B^{*}}={\rm N}\!\paren{A}$, then $A^{*}A+B^{*}B$ is invertible and $A^{\dag}$ is the
unique solution of the equation $\paren{A^{*}A+B^{*}B}X=A^{*}$.

 \item[(ii)] If ${\rm R}\!\paren{B} = {\rm N}\!\paren{A^{*}}$, then $AA^{*}+BB^{*}$ is invertible and $A^{\dag}$ is the
unique solution of the equation $X\paren{AA^{*}+BB^{*}}=A^{*}$.

 \item[(iii)] If $m=n$, ${\rm R}\!\paren{B^{*}} = {\rm N}\!\paren{A}$ and ${\rm R}\!\paren{B} \subseteq {\rm N}\!\paren{A^{*}}$ (or, ${\rm R}\!\paren{B^{*}} \subseteq {\rm N}\!\paren{A}$ and ${\rm R}\!\paren{B} = {\rm N}\!\paren{A^{*}}$),
then $A+B$ is invertible, $A^{\dag}=\paren{A+B}^{-1}-B^{\dag}$ and
$A^{\dag}$ is the unique solution of any of the equations
$(A+B)X=P_{{\rm N}\!\paren{B^{*}}}$ and $X(A+B)=P_{{\rm
N}\!\paren{B}}$.
\end{enumerate}
\end{theorem}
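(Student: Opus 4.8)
The plan is to handle the three parts by different means: parts (i) and (ii) call for a \emph{direct} verification, since (as explained at the end) the weaker hypotheses here do not let us invoke the sum theorems of Section~\ref{S MPI of sums of operators}, whereas part (iii) reduces cleanly to Theorem~\ref{T MPI expression sum} and Theorem~\ref{T MPI expression sum invertible}. Throughout I would use the finite-dimensional orthogonal splitting $\mathbb{F}^{n}={\rm N}(A)\oplus{\rm R}(A^{*})$ and the standard identities ${\rm R}(A^{\dag})={\rm R}(A^{*})$, $AA^{\dag}=P_{{\rm R}(A)}$, and $A^{*}P_{{\rm R}(A)}=A^{*}$. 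For (i), I would first establish invertibility of $A^{*}A+B^{*}B$: it is Hermitian, hence invertible iff its null space is trivial, and from $x^{*}(A^{*}A+B^{*}B)x=\norm{Ax}^{2}+\norm{Bx}^{2}$ one reads off ${\rm N}(A^{*}A+B^{*}B)={\rm N}(A)\cap{\rm N}(B)$; the hypothesis ${\rm R}(B^{*})={\rm N}(A)$ then forces this to be ${\rm R}(B^{*})\cap{\rm R}(B^{*})^{\perp}=\set{0}$, using only ${\rm N}(B)={\rm R}(B^{*})^{\perp}$. Second, I would check that $A^{\dag}$ solves $(A^{*}A+B^{*}B)X=A^{*}$: here $A^{*}AA^{\dag}=A^{*}P_{{\rm R}(A)}=A^{*}$, while $B^{*}BA^{\dag}=0$ because ${\rm R}(A^{\dag})={\rm R}(A^{*})={\rm N}(A)^{\perp}={\rm R}(B^{*})^{\perp}={\rm N}(B)$, so $BA^{\dag}=0$. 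Adding the two terms gives $(A^{*}A+B^{*}B)A^{\dag}=A^{*}$, and uniqueness is immediate from invertibility.

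Part (ii) I would obtain from (i) by duality: applying (i) to the pair $A^{*},B^{*}$ (whose hypothesis ${\rm R}((B^{*})^{*})={\rm N}(A^{*})$ is exactly the stated ${\rm R}(B)={\rm N}(A^{*})$) shows $AA^{*}+BB^{*}$ is invertible and that $(A^{*})^{\dag}=(A^{\dag})^{*}$ solves $(AA^{*}+BB^{*})X=A$; taking adjoints and using that $AA^{*}+BB^{*}$ is Hermitian yields $A^{\dag}(AA^{*}+BB^{*})=A^{*}$, again with uniqueness from invertibility.

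For (iii), the key observation is that, when $m=n$, the stated hypotheses restore the full range-orthogonality demanded by Theorem~\ref{T MPI expression sum}. Taking the first alternative, ${\rm R}(B^{*})={\rm N}(A)$ gives ${\rm R}(A^{*})={\rm N}(B)$ and ${\rm R}(B)\subseteq{\rm N}(A^{*})$ gives ${\rm R}(A)\subseteq{\rm N}(B^{*})$; together these are precisely the hypotheses of Theorem~\ref{T MPI expression sum} with $K=2$, $A_{1}=A$, $A_{2}=B$ (a dimension count in the square case upgrades the inclusion to an equality, so the two alternatives coincide). Theorem~\ref{T MPI expression sum}(iv) then gives $(A+B)^{\dag}=A^{\dag}+B^{\dag}$, i.e. $A^{\dag}=(A+B)^{\dag}-B^{\dag}$. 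To replace $(A+B)^{\dag}$ by $(A+B)^{-1}$ I would note that $A+B$ is square with ${\rm N}(A+B)={\rm N}(A)\cap{\rm N}(B)={\rm R}(B^{*})\cap{\rm R}(B^{*})^{\perp}=\set{0}$ by Theorem~\ref{T MPI expression sum}(iii), hence invertible. Finally, the characterizations $(A+B)X=P_{{\rm N}(B^{*})}$ and $X(A+B)=P_{{\rm N}(B)}$ are exactly Theorem~\ref{T MPI expression sum invertible} for this $K=2$ situation with $k_{0}=1$, and uniqueness follows once more from the invertibility of $A+B$.

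The main obstacle is conceptual rather than computational. The single equality in (i) and (ii) controls only the $A^{*}/B^{*}$ side (equivalently $AB^{*}=0$) but says nothing about the orthogonality of ${\rm R}(A)$ and ${\rm R}(B)$; consequently the sum theorems of Section~\ref{S MPI of sums of operators} do \emph{not} apply and one must argue directly, the single fact that makes the direct argument succeed being $BA^{\dag}=0$. In (iii), by contrast, the extra hypothesis supplies exactly the missing range-orthogonality, which is what lets the proof collapse back onto Theorem~\ref{T MPI expression sum} and Theorem~\ref{T MPI expression sum invertible}.
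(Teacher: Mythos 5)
Your proposal is correct, and for parts (i) and (ii) it takes a genuinely different route from the paper. The paper disposes of (i) and (ii) by citation: it invokes equalities (8) and (9) of Ji (2005), so no argument appears in the text at all. You instead give a self-contained elementary verification: invertibility of $A^{*}A+B^{*}B$ via the quadratic-form identity ${\rm N}\!\paren{A^{*}A+B^{*}B}={\rm N}\!\paren{A}\cap{\rm N}\!\paren{B}$ together with ${\rm R}\!\paren{B^{*}}\cap{\rm R}\!\paren{B^{*}}^{\perp}=\set{0}$, and then the key computation $B^{*}BA^{\dag}=0$ (from ${\rm R}\!\paren{A^{\dag}}={\rm R}\!\paren{A^{*}}={\rm N}\!\paren{A}^{\perp}={\rm N}\!\paren{B}$) plus $A^{*}AA^{\dag}=A^{*}$; your derivation of (ii) from (i) by passing to adjoints and using Hermiticity of $AA^{*}+BB^{*}$ is clean and avoids repeating the work. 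What the paper's approach buys is brevity; what yours buys is a proof readable without consulting the reference, and it makes transparent exactly which hypothesis is doing what. For part (iii) you follow essentially the paper's route: reduce to Theorem~\ref{T MPI expression sum}(iv) for $A^{\dag}=\paren{A+B}^{-1}-B^{\dag}$ and to Theorem~\ref{T MPI expression sum invertible} for the two single-equation characterizations. The only divergence is in how invertibility of $A+B$ is obtained and how the two alternative hypotheses are handled: the paper treats them separately, getting injectivity from Theorem~\ref{T MPI expression sum injective}(iii) in one case and surjectivity from Theorem~\ref{T MPI expression sum surjective}(iii) in the other, whereas you observe that in the square case a rank count upgrades the inclusion to an equality so the two alternatives coincide, and then get ${\rm N}\!\paren{A+B}={\rm N}\!\paren{A}\cap{\rm N}\!\paren{B}=\set{0}$ from Theorem~\ref{T MPI expression sum}(iii); both are valid, and your coincidence observation is a nice supplementary fact the paper does not record. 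One small imprecision, harmless to the proof: in your closing remark, the hypothesis ${\rm R}\!\paren{B^{*}}={\rm N}\!\paren{A}$ is strictly stronger than $AB^{*}=0$ (the latter only gives the inclusion ${\rm R}\!\paren{B^{*}}\subseteq{\rm N}\!\paren{A}$), so the parenthetical ``equivalently $AB^{*}=0$'' should be softened; your actual argument uses the full equality, so nothing breaks.
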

\begin{proof}
Parts (i) and (ii) follow from \cite[Equalities (8) and (9)]{Ji
(2005)}.

Assume that $m=n$. If ${\rm R}\!\paren{B^{*}} = {\rm N}\!\paren{A}$
and ${\rm R}\!\paren{B} \subseteq {\rm N}\!\paren{A^{*}}$, by
Theorem~\ref{T MPI expression sum injective}(iii), $A+B$ is
injective and hence invertible. If ${\rm R}\!\paren{B^{*}} \subseteq
{\rm N}\!\paren{A}$ and ${\rm R}\!\paren{B} = {\rm
N}\!\paren{A^{*}}$, by Theorem~\ref{T MPI expression sum
surjective}(iii), $A+B$ is surjective and hence invertible. By
Theorem~\ref{T MPI expression sum}(iv),
$A^{\dag}=\paren{A+B}^{-1}-B^{\dag}$. The rest of part (iii) follows
from Theorem~\ref{T MPI expression sum invertible}.
\end{proof}

\begin{remark}
{\rm In \cite{Ji (2005)} appears the following variant of parts (i)
and (ii) of the previous theorem. Let $A \in \mathcal{M}_{m,n}$ and
$r = {\rm rank}\!\paren{A}$. Let $V \in M_{n-r,n}$, ${\rm
rank}(V)=n-r$ and ${\rm R}(V^{*})={\rm N}\!\paren{A}$. Let $W \in
M_{m,m-r}$, ${\rm rank}(W)=m-r$ and ${\rm R}(W)={\rm
N}\!\paren{A^{*}}$. By \cite[Theorem 3]{Ji (2005)},
$A^{\dag}=\paren{A^{*}A+V^{*}V}^{-1}A^{*}=A^{*}\paren{AA^{*}+WW^{*}}^{-1}$.

Theorem 3 in \cite{Ji (2005)} is used to obtain condensed Cramer
rules for the minimal-norm least-squares solution $x=A^{\dag}b$ of
linear equations $Ax=b$ and to give condensed determinantal
expressions for $A^{\dag}$, $AA^{\dag}$ and $A^{\dag}A$. This
theorem in \cite{Ji (2005)} is proved given before an explicit
expression for the $\{2\}$-inverse of $A$ with range $T$ and null
space $S$ (see \cite[Theorem 2]{Ji (2005)}). Then the expressions
for $A^{\dag}$ are obtained considering $T={\rm R}\!\paren{A^{*}}$
and $S={\rm N}\!\paren{A^{*}}$.}
\end{remark}

We finish this section showing that the sufficient conditions of
\cite[Theorem 3.2]{Sivakumar (2020)} (see also \cite[Proposition
2.3]{Baksalary-Sivakumar-Trenkler (2022)}) and of Theorem~\ref{T MPI
expression sum} are not necessary. This will also show the relation
of  Theorem~\ref{T MPI expression sum} with the singular value
decomposition. We recall first a property. Let $A = V \Sigma W^{*}$
where $V \in \mathcal{M}_{m}$ and $W \in \mathcal{M}_{n}$ are
unitary matrices, and
\begin{equation}\label{E Sigma}
\Sigma=\left(
         \begin{array}{cc}
           \Sigma_{r} & O_{r,n-r} \\
           O_{m-r,r} & O_{m-r,n-r} \\
         \end{array}
       \right) \in \mathcal{M}_{m,n}.
\end{equation}
with $O_{k,l}$ a zero matrix in $\mathcal{M}_{k,l}$ and $\Sigma_{r}$
a diagonal matrix in $\mathcal{M}_{r}$ with nonzero diagonal
elements. Let $\Sigma^{\dag}$ obtained from $\Sigma$ by first
replacing each nonzero element with its inverse and then
transposing. Then $A^{\dag}=W \Sigma^{\dag} V^{*}$.

\begin{remark}\label{R Theorem 3.2 Sivakumar (2020) conditions not necessary}
{\rm Let $A, B \in \mathcal{M}_{n}$. By \cite[Theorem 3.2]{Sivakumar
(2020)}, if $AB^{*} + BB^{*} = 0$ and $B^{*}A + B^{*}B = 0$ (see
\cite{Baksalary-Sivakumar-Trenkler (2022)} for details about these
conditions), then $\paren{A+B}^{\dag}=A^{\dag}+B^{\dag}$. Now, if
${\rm R}\!\paren{B^{*}} \subseteq {\rm N}\!\paren{A}$, ${\rm
R}\!\paren{A^{*}} \subseteq {\rm N}\!\paren{B}$, ${\rm R}(B)
\subseteq {\rm N}\!\paren{A^{*}}$, ${\rm R}\!\paren{A} \subseteq
{\rm N}\!\paren{B^{*}}$, $BB^{*} \neq0$ and $B^{*}B \neq 0$, then
$AB^{*} + BB^{*} \neq 0$ and $B^{*}A + B^{*}B \neq 0$ and, by
Theorem~\ref{T MPI expression sum},
$\paren{A+B}^{\dag}=A^{\dag}+B^{\dag}$. We are going to see that $A,
B \in \mathcal{M}_{n}$ with the previous properties exist.

Let $A \in \mathcal{M}_{n}$ be arbitrary, $r = {\rm
rank}\!\paren{A}$, $q = {\rm min}\{n\}$ and $r \leq q' \leq q$. Let
$A = V \Sigma W^{*}$ be a singular value decomposition of $A$ where
$\Sigma$ is as in (\ref{E Sigma}) and the diagonal elements
$\sigma_{1}, \ldots, \sigma_{r}$ of $\Sigma_{r}$ are the
\emph{singular values} of $A$ (see e.g. \cite[7.3.P7]{Horn-Johnson
(2013)}). Let $B=VEW^{*}$ where
\[E=\left(
      \begin{array}{ccc}
        O_{r,r} & O_{r,q'-r} & O_{r,n-q'} \\
        O_{q'-r,r} & E_{q'-r} & O_{q'-r,n-q'} \\
        O_{n-q',r} & O_{n-q',q'-r} & O_{n-q',n-q'} \\
      \end{array}
    \right)\] and $E_{q'-r}$ is a diagonal matrix in $M_{q'-r}$ with positive diagonal elements $e_{1}$, ..., $e_{q'-r}$. Then ${\rm
R}\!\paren{B^{*}} \subseteq {\rm N}\!\paren{A}$, ${\rm
R}\!\paren{A^{*}} \subseteq {\rm N}\!\paren{B}$, ${\rm R}\!\paren{B}
\subseteq {\rm N}\!\paren{A^{*}}$, ${\rm R}\!\paren{A} \subseteq
{\rm N}\!\paren{B^{*}}$, $BB^{*} \neq0$ and $B^{*}B \neq 0$ and
\[\paren{A+B}^{\dag}=W\paren{\Sigma+
E}^{\dag}V^{*}=W\paren{\Sigma^{\dag}+E^{\dag}}V^{*}=W\Sigma^{\dag}V^{*}+WE^{\dag}V^{*}=A^{\dag}+B^{\dag}.\]
This shows that the sufficient conditions of \cite[Theorem
3.2]{Sivakumar (2020)} are not necessary.}
\end{remark}
\begin{remark}\label{R Theorem T MPI expression sum conditions not necessary}
{\rm Let $\alpha, \beta, \gamma \in \mathbb{C}\setminus\set{0}$ be
such that
$\paren{\alpha+\beta+\gamma}^{-1}=\alpha^{-1}+\beta^{-1}+\gamma^{-1}$
(e.g., $\alpha=\beta=-\gamma$). Let $V \in \mathcal{M}_{m}$ and $W
\in \mathcal{M}_{n}$ be unitary matrices. Let $\Sigma_{\alpha}$,
$\Sigma_{\beta}$ and $\Sigma_{\gamma}$ as in (\ref{E Sigma}) with
the $r$ nonzero entries equal to $\alpha$, $\beta$ and $\gamma$,
respectively. Thus, $\paren{\Sigma_{\alpha}+ \Sigma_{\beta}+
\Sigma_{\gamma}}^{\dag}=\Sigma_{\alpha}^{\dag}+
\Sigma_{\beta}^{\dag}+ \Sigma_{\gamma}^{\dag}$ and if $A = V
\Sigma_{\alpha} W^{*}$, $B = V \Sigma_{\beta} W^{*}$ and $C = V
\Sigma_{\gamma} W^{*}$, then
$\paren{A+B+C}^{\dag}=A^{\dag}+B^{\dag}+C^{\dag}$. In this case,
${\rm R}\!\paren{A}={\rm R}\!\paren{B}={\rm R}\!\paren{C}$, ${\rm
N}\!\paren{A}={\rm N}\!\paren{B}={\rm N}\!\paren{C}$, ${\rm
R}\!\paren{A^{*}}={\rm R}\!\paren{B^{*}}={\rm R}\!\paren{C^{*}}$ and
${\rm N}\!\paren{A^{*}}={\rm N}\!\paren{B^{*}}={\rm
N}\!\paren{C^{*}}$. This example shows that the conditions of
Theorem~\ref{T MPI expression sum} are not necessary.

Note that $(\alpha+\beta)^{-1}=\alpha^{-1}+\beta^{-1}$ if and only
if $\alpha^{2}+\alpha\beta+\beta^{2}=0$, or equivalently,
$\alpha=\paren{\frac{-1\pm i \sqrt{3}}{2}}\beta$. Hence $\alpha$ and
$\beta$ cannot be both real numbers. See \cite{Boman-Uhlig (2002),
DAngelo (2012)} for interesting details about the equality
$\paren{\alpha+\beta}^{-1}=\alpha^{-1}+\beta^{-1}$.}
\end{remark}

\section{Moore--Penrose inverse of circulant matrices}\label{S MPI of
circulant matrices}

In this section, we consider circulant matrices of order $n \geq 2$,
$C={\rm circ}\!\paren{c}$ where $c \in \mathbb{C}^{n}$ (see
\cite{Davis (1994), Aldrovandi (2001), Gray (2001)}). For example,
if $n=2, 3$ we have, \[C=\left(
                       \begin{array}{cc}
                         c(1) & c(2) \\
                         c(2) & c(1) \\
                       \end{array}
                     \right),\,\,
\,\,\,C=\left(
          \begin{array}{ccc}
            c(1) & c(2) & c(3) \\
            c(3) & c(1) & c(2) \\
            c(2) & c(3) & c(1) \\
          \end{array}
        \right).
\] If $\Pi={\rm circ}\!\paren{0,1,0,\ldots,0}$ then
$\Pi^{-1}=\Pi^{n-1}=\Pi^{t}={\rm circ}\!\paren{0,\ldots,0,1}$ and
$C={\rm circ}\!\paren{c}=\sum_{k=0}^{n-1}c(k+1)\Pi^{k}$. If $\rho:
\mathbb{C}^{n} \rightarrow \mathbb{C}^{n}$ is given by
$\paren{\rho(c)}\!(1)=c(1)$ and $\paren{\rho(c)}(k)=c(n-k+2)$ for
$k=2, \ldots, n$, then $C^{t}={\rm circ}\!\paren{\rho(c)}$.

If ${\rm circ}\!\paren{c}={\rm circ}\!\paren{a}{\rm
circ}\!\paren{b}$, then ${\rm circ}\!\paren{c}={\rm
circ}\!\paren{b}{\rm circ}\!\paren{a}$ and
\begin{equation}\label{E producto de circulantes}
c(l)=\sum_{k=1}^{l}a(k)b(l-k+1)+\sum_{k=l+1}^{n}a(k)b(n+l-k+1)
\end{equation} for $l = 1, \ldots, n$, or equivalently, $c = {\rm circ}\!\paren{\rho(a)}b$.

\begin{remark}\label{R MPI rol}
{\rm If $0 \leq l \le n-1$ and $C$ is a circulant matrix, then ${\rm
N}\!\paren{\Pi^{l}C}={\rm N}\!\paren{C}$ and, by the reverse-order
law for the Moore--Penrose inverse (see, e.g., \cite[Chapter 4, Ex.
22]{Ben-Israel-Greville (2003)}),
$\paren{\Pi^{l}C}^{\dag}=C^{\dag}\Pi^{n-l}$.}
\end{remark}

In \cite{Carmona-Encinas-Jimenez-Mitjana (2021),
Carmona-Encinas-Jimenez-Mitjana (2022)}, the coefficients of the
inverse and the group inverse of a circulant matrix depending on up
to four complex parameters, i.e. ${\rm
circ}\!\paren{a,b,c,d,\ldots,d}$, are expressed in terms of
functions $k_{j}(a,b,c,d)$, $j = 1, \ldots,n$. In particular, in the
case of four parameters these functions involves Chebyshev
polynomials. The group inverse of a circulant matrix coincides with
its Moore--Penrose inverse. The techniques used in these papers are
related with the solution of boundary value problems associated to
second-order linear difference equations. Here we use results of the
previous sections to obtain properties and explicit expressions of
the Moore--Penrose inverse of circulant matrices.

Consider circulant matrices of the form
$C=\alpha\Pi^{k-1}+\beta\Pi^{k}$ where $\alpha, \beta\in
\mathbb{C}\setminus\set{0}$ and $1 \leq k < n$. Then
$C=\Pi^{k-1}\paren{\alpha I+\beta\Pi}$. Similarly, $C=\alpha
I+\beta\Pi^{n-1}=\Pi^{n-1}\paren{\beta I+\alpha\Pi}$. Hence, by
Remark~\ref{R MPI rol}, in both cases in order to study $C^{\dag}$
it is sufficient to consider circulant matrices of the form
$C=\alpha I+\beta\Pi$. Part (ii) of the following lema can be
obtained from \cite[Theorem
2.3]{Carmona-Encinas-Gago-Jimenez-Mitjana (2015)} with parameters
$a=2$, $b=0$ and $c=1$. We include here a direct brief proof.
\begin{lemma}\label{L circ 2,0,1, ...,1}
The following assertions hold:
\begin{enumerate}
  \item[(i)] If $n$ even, $C_{1,1}={\rm
circ}\!\paren{1,1, 0, \ldots, 0}$, $v=\paren{1,-1,1,-1,\ldots,1,-1}$
and $w \in \mathbb{R}^{n}$ is given by
\[w(k)=\paren{-1}^{k+1}\paren{n^{2}-\paren{2k-1}n+2}\] for $k=1,
\ldots, n$, then $C_{1,1}+v*v^{t}={\rm
circ}\!\paren{2,0,1,-1,\ldots,1,-1}$ is invertible and
\[\paren{C_{1,1}+v*v^{t}}^{-1}=\frac{1}{2n^{2}}{\rm
circ}\!\paren{w}.\]

  \item[(ii)] If $C_{1,-1}={\rm circ}\!\paren{1,-1,
0, \ldots, 0}$, then $C_{1,-1}+ee^{t}={\rm
circ}\!\paren{2,0,1,\ldots,1}$ is invertible and
\[\paren{C_{1,-1}+ee^{t}}^{-1}=-\frac{1}{2n}{\rm
circ}\!\paren{1,3,5,\ldots,2n-1}+\frac{n^{2}+2}{2n^{2}}ee^{t}.\]
\end{enumerate}
\end{lemma}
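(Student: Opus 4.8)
The plan is to verify both invertibility and the explicit inverse formulas by direct computation in the algebra of circulant matrices, exploiting the fact that all matrices involved are circulant (or reduce to circulant form after the rank-one update). The key structural observation I would use is that the product of two circulant matrices is computed by the cyclic convolution formula~(\ref{E producto de circulantes}), namely $c = {\rm circ}\!\paren{\rho(a)}b$, so that checking ${\rm circ}\!\paren{x}\,{\rm circ}\!\paren{y} = I$ amounts to checking that the cyclic convolution of the generating vectors equals $e_{1}$. Both $v*v^{t}$ and $ee^{t}$ are themselves nicely structured: since $v=\paren{1,-1,\ldots,1,-1}$, the matrix $v*v^{t}$ has entries $\paren{-1}^{i+j}$, which for $n$ even is again circulant, and $ee^{t}$ is the all-ones matrix ${\rm circ}\!\paren{1,1,\ldots,1}$. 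This is what makes the stated sums ${\rm circ}\!\paren{2,0,1,-1,\ldots,1,-1}$ and ${\rm circ}\!\paren{2,0,1,\ldots,1}$ come out circulant, so the whole computation stays inside the circulant algebra.

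For part~(ii), which is the cleaner case, I would first confirm that $C_{1,-1}+ee^{t}={\rm circ}\!\paren{2,0,1,\ldots,1}$ by adding the generating vectors $(1,-1,0,\ldots,0)$ and $(1,1,\ldots,1)$ entrywise. Then I would verify the claimed inverse by multiplying ${\rm circ}\!\paren{2,0,1,\ldots,1}$ against the proposed inverse $-\frac{1}{2n}{\rm circ}\!\paren{1,3,5,\ldots,2n-1}+\frac{n^{2}+2}{2n^{2}}ee^{t}$ and showing the product is $I$. Here I would use two facts that simplify the algebra: any circulant times $ee^{t}$ yields a multiple of $ee^{t}$ (because $ee^{t}$ projects onto the constant vector, an eigenvector of every circulant with eigenvalue equal to the row sum), and the convolution of $(2,0,1,\ldots,1)$ with $(1,3,5,\ldots,2n-1)$ can be evaluated by the arithmetic-progression sums $\sum (2k-1)$. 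The row-sum of ${\rm circ}\!\paren{2,0,1,\ldots,1}$ is $n+1$, which will control the $ee^{t}$ contribution.

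For part~(i) I would proceed analogously but the bookkeeping is heavier. After confirming $C_{1,1}+v*v^{t}={\rm circ}\!\paren{2,0,1,-1,\ldots,1,-1}$ using that $\paren{v*v^{t}}(i,j)=\paren{-1}^{i+j}$ and $n$ is even, I would verify $\frac{1}{2n^{2}}{\rm circ}\!\paren{w}$ is the inverse by checking that the cyclic convolution of $(2,0,1,-1,\ldots,1,-1)$ with $w$, where $w(k)=\paren{-1}^{k+1}\paren{n^{2}-\paren{2k-1}n+2}$, equals $2n^{2}e_{1}$. The alternating signs in both factors are the feature to track carefully: I expect the convolution sum to split into an $n^{2}$-term, a linear-in-$k$ term, and a constant term, each summed against an alternating pattern. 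I anticipate the main obstacle to be precisely this alternating-sign convolution in part~(i): confirming that the off-diagonal convolution entries all cancel requires summing expressions like $\sum_{k}\paren{-1}^{k}\paren{2k-1}$ over cyclically shifted index ranges, and the telescoping of the quadratic defining $w$ against the pattern $(1,-1)$ must be handled with care at the wrap-around boundary. Alternatively, and perhaps more cleanly, I could diagonalize via the discrete Fourier transform: a circulant ${\rm circ}\!\paren{c}$ has eigenvalues $\lambda_{j}=\sum_{l} c(l)\omega^{(j-1)(l-1)}$ with $\omega=e^{2\pi i/n}$, and since both the matrix and its claimed inverse are circulant, it suffices to check $\lambda_{j}\mu_{j}=1$ for every Fourier mode $j$; the alternating vectors $v$ and the patterns in $w$ correspond to evaluation at $\omega^{(n/2)(\cdot)}=\pm 1$, which may make the eigenvalue products transparent and sidestep the convolution bookkeeping entirely.
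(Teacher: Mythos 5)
Your plan follows essentially the same route as the paper's own proof: both verify the claimed inverses by multiplying the circulant matrix by the proposed inverse and checking, via the cyclic convolution formula (\ref{E producto de circulantes}), that the product's generating vector equals $2n^{2}e_{1}$ in part (i) and $e_{1}$ in part (ii), using $C_{1,-1}e=0$ and the interaction of circulants with $ee^{t}$ exactly as you describe. One arithmetic slip to fix before executing: the row sum of ${\rm circ}\!\paren{2,0,1,\ldots,1}$ is $2+0+(n-2)=n$, not $n+1$; with that correction your $ee^{t}$ bookkeeping in part (ii) is consistent, since $\paren{C_{1,-1}+ee^{t}}e=ne$ and the claimed inverse sends $e$ to $\frac{1}{n}e$.
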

\begin{proof}
(i) Let ${\rm circ}\!\paren{a}:=\paren{C_{1,1}+v*v^{t}}{\rm
circ}\!\paren{w}$. Using (\ref{E producto de circulantes}) we
obtain,
\begin{align*}
a(1)&=2\paren{n^{2}-n+2}+\sum_{k=2}^{n-1}\paren{-1}^{n-k+3}\paren{-1}^{k+1}\paren{n^{2}-\paren{2k-1}n+2}\\
&=2\paren{n^{2}-n+2}+\paren{n-2}\paren{n^{2}+n+2}-2n\paren{\frac{(n-1)n}{2}-1}=2n^{2},
\end{align*}
and for $l = 2, \ldots, n$,
\begin{align*}
a(l)=&\sum_{k=1}^{l-2}\paren{-1}^{l-k+2}\paren{-1}^{k+1}\paren{n^{2}-\paren{2k-1}n+2}+2\paren{-1}^{l+1}\paren{n^{2}-\paren{2l-1}n+2}+\\
&+\sum_{k=l+1}^{n}\paren{-1}^{n+l-k+2}\paren{-1}^{k+1}\paren{n^{2}-\paren{2k-1}n+2}\\
=&\paren{-1}^{l+1}\paren{\sum_{k=1}^{n}\paren{n^{2}-\paren{2k-1}n+2}+\paren{n^{2}-\paren{2l-1}n+2}-\paren{n^{2}-\paren{2\paren{l-1}-1}n+2}}\\
=&\paren{-1}^{l+1}\paren{n\paren{n^{2}+n+2}-2n\frac{n(n+1)}{2}-2n}=0.
\end{align*}
This shows that $\paren{C_{1,1}+v*v^{t}}^{-1}=\frac{1}{2n^{2}}{\rm
circ}\!\paren{w}$.

(ii) Let $B=-\frac{1}{2n}{\rm
circ}\!\paren{1,3,5,\ldots,2n-1}+\frac{n^{2}+1}{2n^{2}}ee^{t}$.
Using that $C_{1,-1}e=0$ and (\ref{E producto de circulantes}), we
obtain
\begin{align*}
\paren{C_{1,-1}+ee^{t}}B&=-\frac{1}{2n}C_{1,-1}{\rm
circ}\!\paren{1,3,5,\ldots,2n-1}-\frac{1}{2n}\sum_{k=1}^{n}\paren{2k-1}ee^{t}+\frac{n^{2}+2}{2n}ee^{t}\\
&=-\frac{1}{2n}{\rm
circ}\!\paren{2-2n,2,2,\ldots,2}+\frac{1}{n}ee^{t}={\rm
circ}\!\paren{1,0,\ldots,0}=I.
\end{align*}
Hence $B=C^{-1}$.
\end{proof}
The following proposition can be derived from \cite[Theorem
3.4.]{Carmona-Encinas-Jimenez-Mitjana (2021)} considering parameters
$a=\alpha$, $b=\beta$, and $c=0$. Here, we present a short proof
based on the previous lemma and Corollary~\ref{C MPI expression full
rank}(iii), giving the explicit expressions for the Moore--Penrose
inverses.
\begin{proposition}\label{P MPI circulant dos}
Let $C=\alpha I+\beta\Pi$ with $\alpha, \beta\in
\mathbb{C}\setminus\set{0}$. Then $C$ is singular if and only if
$\beta^{n}=\paren{-1}^{n}\alpha^{n}$ and the following assertions
hold:
\begin{enumerate}
  \item[(i)] If $n$ is even and $\alpha=\beta$, then ${\rm N}\!\paren{C}={\rm
span}\set{\paren{1,-1,1,-1,\ldots,1,-1}}$ and
\[C^{\dag}=\frac{1}{\alpha n^{2}}\paren{\frac{1}{2}{\rm circ}\!\paren{w}-{\rm
circ}\!\paren{1,-1,1,-1,\ldots,1,-1}},\] where $w \in
\mathbb{R}^{n}$ is given by
$w(k)=\paren{-1}^{k+1}\paren{n^{2}-\paren{2k-1}n+2}$ for $k=1,
\ldots, n$.
  \item[(ii)] If $\alpha=-\beta$, then ${\rm N}\!\paren{C}={\rm
span}\set{\paren{1,1,\ldots,1}}$ and
\begin{align*}
C^{\dag}&=\frac{1}{2\alpha}\paren{-\frac{1}{n}{\rm
circ}\!\paren{1,3,5,\ldots,2n-1}+ee^{t}}\\
&=\frac{1}{2nc_{1}}{\rm circ}\!\paren{n-1,n-3,n-5,\ldots,2-n,1-n}.
\end{align*}
\end{enumerate}
\end{proposition}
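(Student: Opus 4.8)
The plan is to settle the singularity criterion first and then obtain each Moore--Penrose inverse by recognizing $C$ as a nonzero scalar multiple of one of the real circulant matrices treated in Lemma~\ref{L circ 2,0,1, ...,1} and applying Corollary~\ref{C MPI expression full rank}(iii) with a single, carefully chosen rank-one correction. For the singularity criterion I would invoke the spectral theory of circulant matrices: the eigenvalues of $C=\alpha I+\beta\Pi$ are $\alpha+\beta\omega^{j}$, $j=0,\ldots,n-1$, where $\omega=e^{2\pi i/n}$. Hence $C$ is singular if and only if $\omega^{j}=-\alpha/\beta$ for some $j$, i.e.\ if and only if $-\alpha/\beta$ is an $n$-th root of unity, which is equivalent to $\paren{-\alpha/\beta}^{n}=1$, that is, $\beta^{n}=\paren{-1}^{n}\alpha^{n}$. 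In the two cases this holds automatically: for (i), $\alpha=\beta$ with $n$ even gives $\paren{-1}^{n}\alpha^{n}=\alpha^{n}=\beta^{n}$; for (ii), $\beta=-\alpha$ gives $\beta^{n}=\paren{-1}^{n}\alpha^{n}$ for every $n$. Writing $C_{1,1}=I+\Pi={\rm circ}\!\paren{1,1,0,\ldots,0}$ and $C_{1,-1}=I-\Pi={\rm circ}\!\paren{1,-1,0,\ldots,0}$, we have $C=\alpha C_{1,1}$ in case (i) and $C=\alpha C_{1,-1}$ in case (ii); since $\paren{\lambda A}^{\dag}=\lambda^{-1}A^{\dag}$ for every $\lambda\in\complex\setminus\set{0}$ (a quick check of the four defining equations), it suffices to compute the real matrices $C_{1,1}^{\dag}$ and $C_{1,-1}^{\dag}$ and multiply by $1/\alpha$.

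For case (i) I would note that $C_{1,1}x=0\iff\Pi x=-x$, which (using that $n$ is even) forces $x$ to be a multiple of $v=\paren{1,-1,\ldots,1,-1}$; since $\Pi^{*}=\Pi^{-1}$, the same computation yields ${\rm N}\!\paren{C_{1,1}^{*}}={\rm span}\set{v}$, so ${\rm N}\!\paren{C}={\rm span}\set{v}$ and $r={\rm rank}\!\paren{C_{1,1}}=n-1$, $q-r=1$. Taking the single normalized null vector $f_{1}=g_{1}=v/\norm{v}=v/\sqrt{n}$ and the free parameter $d_{1}=n$, the correction becomes $d_{1}g_{1}f_{1}^{*}=vv^{t}$, so Corollary~\ref{C MPI expression full rank}(iii) gives $C_{1,1}^{\dag}=\paren{C_{1,1}+vv^{t}}^{-1}-\tfrac{1}{n^{2}}vv^{t}$. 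By Lemma~\ref{L circ 2,0,1, ...,1}(i) the inverse equals $\tfrac{1}{2n^{2}}{\rm circ}\!\paren{w}$, while $vv^{t}={\rm circ}\!\paren{1,-1,\ldots,1,-1}$; multiplying by $1/\alpha$ then produces the stated formula.

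Case (ii) is entirely parallel. Here $C_{1,-1}x=0\iff\Pi x=x$, whose solutions are the multiples of $e$, and again ${\rm N}\!\paren{C_{1,-1}^{*}}={\rm span}\set{e}$, so ${\rm N}\!\paren{C}={\rm span}\set{e}$ and $q-r=1$. With $f_{1}=g_{1}=e/\sqrt{n}$ and $d_{1}=n$ the correction is $d_{1}g_{1}f_{1}^{*}=ee^{t}$, and Corollary~\ref{C MPI expression full rank}(iii) gives $C_{1,-1}^{\dag}=\paren{C_{1,-1}+ee^{t}}^{-1}-\tfrac{1}{n^{2}}ee^{t}$. Substituting the inverse from Lemma~\ref{L circ 2,0,1, ...,1}(ii) and collecting the $ee^{t}$ terms via $\tfrac{n^{2}+2}{2n^{2}}-\tfrac{1}{n^{2}}=\tfrac12$ yields the first displayed expression for $C^{\dag}=\tfrac1\alpha C_{1,-1}^{\dag}$. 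The second expression follows by the purely algebraic identity $n\,ee^{t}={\rm circ}\!\paren{n,\ldots,n}$, since then $-{\rm circ}\!\paren{1,3,\ldots,2n-1}+n\,ee^{t}={\rm circ}\!\paren{n-1,n-3,\ldots,1-n}$, whose $k$-th generator is $n-\paren{2k-1}$.

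I expect no genuine obstacle. The only points requiring care are verifying that ${\rm N}\!\paren{C^{*}}={\rm N}\!\paren{C}$, so that the left and right null vectors coincide and one may take $f_{1}=g_{1}$, and the bookkeeping that matches the rank-one correction $d_{1}g_{1}f_{1}^{*}$ to the outer products $vv^{t}$ and $ee^{t}$ of Lemma~\ref{L circ 2,0,1, ...,1}, which is precisely what pins down the choice $d_{1}=n$. Both become routine once the reduction $C=\alpha C_{1,\pm1}$ and the scalar law $\paren{\lambda A}^{\dag}=\lambda^{-1}A^{\dag}$ are in place.
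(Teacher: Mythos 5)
Your proposal is correct and follows essentially the same route as the paper: reduce to $C_{1,\pm 1}$ via $C=\alpha C_{1,\pm 1}$, identify the one-dimensional null space, and apply Corollary~\ref{C MPI expression full rank}(iii) together with Lemma~\ref{L circ 2,0,1, ...,1} with the rank-one correction $vv^{t}$ (resp.\ $ee^{t}$). The only cosmetic differences are that the paper gets the singularity criterion by solving the recursion coming from $Cx=0$ directly rather than from the circulant eigenvalues $\alpha+\beta\omega^{j}$, and that you also verify the second displayed expression in (ii), which the paper's proof leaves implicit.
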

\begin{proof}
We have $Cx=0$ if and only if
$x(k+1)=\paren{-1}^{k}\paren{\frac{\alpha}{\beta}}^{k}x(1)$ for each
$k=1, \ldots, n-1$ and $x(n)=-\frac{\beta}{\alpha}x(1)$. Hence, $C$
is singular if and only if $\beta^{n}=\paren{-1}^{n}\alpha^{n}$.

(i) If $n$ is even and $\alpha=\beta$, then ${\rm N}\!\paren{C}={\rm
span}\set{\paren{1,-1,1,-1,\ldots,1,-1}}$. Let $C_{1,1}$ and $v$ as
in Lemma~\ref{L circ 2,0,1, ...,1}. By Corollary~\ref{C MPI
expression full rank}(iii) and Lemma~\ref{L circ 2,0,1, ...,1},
\begin{align*}
C^{\dag}&=\frac{1}{\alpha}C_{1,1}^{\dag}=\frac{1}{\alpha}\paren{\paren{C_{1,1}+vv^{t}}^{-1}-\frac{1}{\norm{v}^{2}}vv^{t}}\\
&=\frac{1}{\alpha n^{2}}\paren{\frac{1}{2}{\rm circ}\!\paren{w}-{\rm
circ}\!\paren{1,-1,1,-1,\ldots,1,-1}}.
\end{align*}

(ii) If $\alpha=-\beta$, then ${\rm N}\!\paren{C}={\rm
span}\set{\paren{1,1,\ldots,1}}$. Consider $C_{1,-1}$ as in
Lemma~\ref{L circ 2,0,1, ...,1}. By Corollary~\ref{C MPI expression
full rank}(iii) and Lemma~\ref{L circ 2,0,1, ...,1},
\begin{align*}
C^{\dag}&=\frac{1}{\alpha}C_{1,-1}^{\dag}=\frac{1}{\alpha}\paren{\paren{C_{1,-1}+ee^{t}}^{-1}-\frac{1}{n^{2}}ee^{t}}\\
&=\frac{1}{\alpha}\paren{-\frac{1}{2n}{\rm
circ}\!\paren{1,3,5,\ldots,2n-1}+\frac{n^{2}+2}{2n^{2}}ee^{t}-\frac{1}{n^{2}}ee^{t}}\\
&=\frac{1}{2\alpha}\paren{-\frac{1}{n}{\rm
circ}\!\paren{1,3,5,\ldots,2n-1}+ee^{t}}.
\end{align*}
\end{proof}

The Fourier matrix of order $n$ denoted with $F$ is given by
$F(k,l)=\frac{1}{\sqrt{n}}e^{-\frac{2 \pi i}{n}(k-1)(l-1)}$. If
$C={\rm circ}\!\paren{c}$, then $C$ is diagonalizable by $F$,
\[C=\overline{F}\Lambda F,\,\,\,\,
C^{\dag}=\overline{F}\Lambda^{\dag} F,\]
\begin{equation}\label{E MPI circulant lambda c}
\lambda=\sqrt{n}\overline{F}c \text { and }
c=\frac{1}{\sqrt{n}}F\lambda.
\end{equation}
where $\lambda$ is the diagonal of $\Lambda$. As a consequence of
Theorem~\ref{T MPI expression sum} and (\ref{E MPI circulant lambda
c}), we obtain the next theorem that provides a way to obtain the
(Moore--Penrose) inverse of circulant matrices in terms of the
(Moore--Penrose) inverses of other circulant matrices. The
\emph{support} of a vector $c$, denoted by ${\rm supp}(c)$, are the
indices of the nonzero components of $c$.

\begin{theorem}\label{T MPI expression sum circulant}
Let $\set{c_{k}}_{k=1}^{K} \subseteq \mathbb{C}^{n}$,
$\lambda_{k}=\sqrt{n}\overline{F}c_{k}$ and
$\widetilde{\lambda_{k}}=\sqrt{n}\overline{F}\rho\!\paren{c_{k}}$
for each $k = 1, \ldots, K$. If ${\rm supp}\!\paren{\lambda_{k}}
\cap {\rm supp}\!\paren{\widetilde{\lambda_{k'}}} = \emptyset$ for
each $k, k' = 1, \ldots, K$, $k \neq k'$, then
\[\paren{{\rm
circ}\!\paren{\sum_{k=1}^{K}c_{k}}}^{\dag}=\sum_{k=1}^{K}\paren{{\rm
circ}\!\paren{c_{k}}}^{\dag}.\] Moreover, if $\bigcup_{k=1}^{K}{\rm
supp}\!\paren{\lambda_{k}}=\set{1, \ldots, n}$, then ${\rm
circ}\!\paren{\sum_{k=1}^{K}c_{k}}$ is invertible.
\end{theorem}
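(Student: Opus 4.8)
The plan is to apply Theorem~\ref{T MPI expression sum}(iv) to the matrices $A_{k}:={\rm circ}(c_{k})$, since ${\rm circ}(\sum_{k}c_{k})=\sum_{k}A_{k}$ by linearity of $c\mapsto{\rm circ}(c)$ and $A_{k}^{\dag}=({\rm circ}(c_{k}))^{\dag}$. Thus all the work lies in checking the two containments ${\rm R}(A_{k})\subseteq{\rm N}(A_{k'}^{*})$ and ${\rm R}(A_{k}^{*})\subseteq{\rm N}(A_{k'})$ for $k\neq k'$, after which (iv) yields the stated identity and (iii) yields the invertibility claim. The tool is the simultaneous diagonalization ${\rm circ}(c)=\overline{F}\Lambda F$ of (\ref{E MPI circulant lambda c}): every circulant is diagonalized by the same eigenvectors $u_{j}:=\overline{F}e_{j}$ (the columns of $\overline{F}$), with $A_{k}u_{j}=\lambda_{k}(j)\,u_{j}$. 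First I would read off ${\rm R}(A_{k})={\rm span}\{u_{j}:j\in{\rm supp}(\lambda_{k})\}$ and ${\rm N}(A_{k})={\rm span}\{u_{j}:j\notin{\rm supp}(\lambda_{k})\}$.

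Next I would pass to adjoints in the \emph{same} eigenbasis: from $A_{k}^{*}=\overline{F}\,\overline{\Lambda}_{k}F$ one gets $A_{k}^{*}u_{j}=\overline{\lambda_{k}(j)}\,u_{j}$, so ${\rm R}(A_{k}^{*})={\rm R}(A_{k})$ and ${\rm N}(A_{k}^{*})={\rm N}(A_{k})$ (circulants are normal). Consequently both required containments reduce to the disjointness of the relevant support sets of the spectra. The point where $\widetilde{\lambda}_{k}=\sqrt{n}\,\overline{F}\rho(c_{k})$ enters is the translation into the conjugate Fourier basis: using $C^{t}={\rm circ}(\rho(c))$ together with the identity $\overline{F}e_{j}=Fe_{\sigma(j)}$, where $\sigma$ is the reflection $1\mapsto1$, $j\mapsto n-j+2$ induced by $\rho$, one rewrites ${\rm R}(A_{k})={\rm span}\{Fe_{i}:i\in{\rm supp}(\widetilde{\lambda}_{k})\}$ and ${\rm N}(A_{k'}^{*})={\rm span}\{Fe_{i}:i\notin{\rm supp}(\widetilde{\lambda}_{k'})\}$, since ${\rm supp}(\widetilde{\lambda}_{k})=\sigma({\rm supp}(\lambda_{k}))$. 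With range and null space now expressed over the same family of columns, the containment holds exactly when the support sets are disjoint, which is the hypothesis; the symmetric computation disposes of ${\rm R}(A_{k}^{*})\subseteq{\rm N}(A_{k'})$. I expect this bookkeeping — keeping straight which Fourier basis each subspace is naturally described in, and tracking the conjugation (for the adjoint) and the reflection $\rho$ (for the transpose) so that $\lambda$ and $\widetilde{\lambda}$ are not confused — to be the main obstacle; everything else is formal.

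Having verified the hypotheses, Theorem~\ref{T MPI expression sum}(iv) gives $(\sum_{k}A_{k})^{\dag}=\sum_{k}A_{k}^{\dag}$, i.e. $({\rm circ}(\sum_{k}c_{k}))^{\dag}=\sum_{k}({\rm circ}(c_{k}))^{\dag}$. For the final assertion I would invoke Theorem~\ref{T MPI expression sum}(iii): ${\rm N}(\sum_{k}A_{k})=\bigcap_{k}{\rm N}(A_{k})={\rm span}\{u_{j}:j\notin\bigcup_{k}{\rm supp}(\lambda_{k})\}$. Hence the assumption $\bigcup_{k}{\rm supp}(\lambda_{k})=\{1,\ldots,n\}$ forces ${\rm N}({\rm circ}(\sum_{k}c_{k}))=\{0\}$, and a square matrix with trivial kernel is invertible, which completes the proof.
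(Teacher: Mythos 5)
Your route --- apply Theorem~\ref{T MPI expression sum}(iv) and (iii) to $A_{k}={\rm circ}\!\paren{c_{k}}$ after checking the two containments via the Fourier diagonalization (\ref{E MPI circulant lambda c}) --- is exactly the derivation the paper intends (the paper gives no proof beyond saying the theorem is a consequence of Theorem~\ref{T MPI expression sum} and (\ref{E MPI circulant lambda c})), and your spectral bookkeeping is correct up to the very last identification, which is where the argument genuinely breaks. From $A_{k}u_{j}=\lambda_{k}(j)u_{j}$ and $A_{k}^{*}u_{j}=\overline{\lambda_{k}(j)}\,u_{j}$ (with $u_{j}=\overline{F}e_{j}$) you correctly conclude that both containments ${\rm R}\!\paren{A_{k}}\subseteq{\rm N}\!\paren{A_{k'}^{*}}$ and ${\rm R}\!\paren{A_{k}^{*}}\subseteq{\rm N}\!\paren{A_{k'}}$ are \emph{equivalent} to ${\rm supp}\!\paren{\lambda_{k}}\cap{\rm supp}\!\paren{\lambda_{k'}}=\emptyset$. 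Your change of basis $u_{j}=Fe_{\sigma(j)}$ is also correct, but it converts this into ${\rm supp}\!\paren{\widetilde{\lambda_{k}}}\cap{\rm supp}\!\paren{\widetilde{\lambda_{k'}}}=\emptyset$, i.e.\ a tilde on \emph{both} supports (which, applying $\sigma$, is the same condition again). The theorem's hypothesis, however, is the mixed condition ${\rm supp}\!\paren{\lambda_{k}}\cap{\rm supp}\!\paren{\widetilde{\lambda_{k'}}}=\emptyset$. Your sentence ``the containment holds exactly when the support sets are disjoint, which is the hypothesis'' silently replaces ${\rm supp}\!\paren{\widetilde{\lambda_{k}}}$ by ${\rm supp}\!\paren{\lambda_{k}}$; since ${\rm supp}\!\paren{\widetilde{\lambda_{k}}}=\sigma\!\paren{{\rm supp}\!\paren{\lambda_{k}}}$ and $\sigma$ is not the identity, these are different conditions for complex $c_{k}$.

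The gap cannot be closed, because the stated hypothesis really does not imply the needed containments: the mixed condition says ${\rm circ}\!\paren{c_{k'}}^{t}\,{\rm circ}\!\paren{c_{k}}=0$ (transpose), whereas Theorem~\ref{T MPI expression sum} needs ${\rm circ}\!\paren{c_{k'}}^{*}\,{\rm circ}\!\paren{c_{k}}=0$ (conjugate transpose). Concretely, take $n=3$, $\omega=e^{2\pi i/3}$, $c_{1}=c_{2}=\frac{1}{3}\paren{1,\omega^{2},\omega}$. Then ${\rm circ}\!\paren{c_{1}}=u_{2}u_{2}^{*}$ is a rank-one orthogonal projection, $\lambda_{1}=\lambda_{2}=\paren{0,1,0}$ and $\widetilde{\lambda_{1}}=\widetilde{\lambda_{2}}=\paren{0,0,1}$, so the stated hypothesis holds for all $k\neq k'$, yet
\[\paren{{\rm circ}\!\paren{c_{1}+c_{2}}}^{\dag}=\tfrac{1}{2}\,u_{2}u_{2}^{*}\neq 2\,u_{2}u_{2}^{*}={\rm circ}\!\paren{c_{1}}^{\dag}+{\rm circ}\!\paren{c_{2}}^{\dag}.\]
So the first assertion is false as printed (the flaw is in the paper's statement, a transpose-versus-adjoint slip, not something you introduced); but a correct proof must detect it, and yours instead papers over it at the step quoted above. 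Under the corrected hypothesis ${\rm supp}\!\paren{\lambda_{k}}\cap{\rm supp}\!\paren{\lambda_{k'}}=\emptyset$ for $k\neq k'$ (equivalently, tildes on both), your argument --- including the invertibility claim via Theorem~\ref{T MPI expression sum}(iii) --- is complete and correct. The two hypotheses coincide whenever every ${\rm supp}\!\paren{\lambda_{k}}$ is $\sigma$-invariant, in particular for real $c_{k}$ and in all of the paper's subsequent applications (there one of the supports lies in the fixed-point set of $\sigma$), which is presumably why the discrepancy went unnoticed.
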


\begin{example}\label{Ex MPI circulant mas menos}
Let $n$ be even. Let $\alpha, \beta \in \mathbb{C}\setminus\set{0}$.
Let $c_{\beta}=\!\paren{0, \ldots, 0, \beta,\beta,0, \ldots, 0}$
where $\beta$ appears in the $l$-th position. Then
\[\lambda_{\beta}=\sqrt{n}\overline{F}c_{\beta}=\beta\paren{e^{\frac{2
\pi i}{n}(k-1)(l-1)}+e^{\frac{2 \pi
i}{n}(k-1)l}}_{k=1}^{n}=\beta\paren{e^{\frac{2 \pi
i}{n}(k-1)l}\paren{e^{-\frac{2 \pi i}{n}(k-1)}+1}}_{k=1}^{n}.\]
Hence $\lambda_{\beta}(\frac{n}{2}+1)=0$ and
$\lambda_{\beta}(k)\neq0$ for $k \neq \frac{n}{2}+1$. If
$\widetilde{\lambda_{\alpha}}=\alpha n e_{\frac{n}{2}+1}$, then
\[c_{\alpha}:=\frac{1}{\sqrt{n}}F\widetilde{\lambda_{\alpha}}=\alpha
\paren{1,-1,\ldots,1,-1}=\rho(c_{\alpha}).\] By
Theorem~\ref{T MPI expression sum circulant},
\[\paren{{\rm circ}\!\paren{c_{\alpha}+c_{\beta}}}^{-1}=\paren{{\rm circ}\!\paren{c_{\alpha}}}^{\dag}+\paren{{\rm circ}\!\paren{c_{\beta}}}^{\dag}.\] If
$c=\paren{1,-1,\ldots,1,-1}$, then $\paren{{\rm
circ}\!\paren{c_{\alpha}}}^{\dag}=\frac{1}{\alpha
\norm{c}^{4}}vv^{t}=\frac{1}{\alpha n^{2}}{\rm circ}\!\paren{c}$. We
also have \[\paren{{\rm
circ}\!\paren{c_{\beta}}}^{\dag}=\frac{1}{\beta}\paren{{\rm
circ}\!\paren{1,1,0, \ldots, 0}}^{\dag}\Pi^{n-l+1}\] and the
explicit expression of $\paren{{\rm circ}\!\paren{1,1,0, \ldots,
0}}^{\dag}$ appears in Proposition~\ref{P MPI circulant dos}(i).
\end{example}

We have ${\rm supp}\!\paren{\sqrt{n}\overline{F}e}=\set{1}$, and if
$a \in \mathbb{C}^{n}$ is such that $\sum_{j=1}^{n}a(j)=0$, then $1
\notin {\rm supp}\!\paren{\sqrt{n}\overline{F}a}$. Hence, by
Theorem~\ref{T MPI expression sum circulant}, we get the following
proposition.
\begin{proposition}\label{P MPI circulant suma cero}
Let $c \in \mathbb{C}^{n}$. The following assertions hold:
\begin{enumerate}
  \item[(i)] If $\sum_{j=1}^{n}c(j)\neq0$, then
\[{\rm circ}\!\paren{c}^{\dag}={\rm
circ}\!\paren{c(1)-\frac{\sum_{j=1}^{n}c(j)}{n}, \ldots,
c(n)-\frac{\sum_{j=1}^{n}c(j)}{n}}^{\dag}+\frac{1}{n\sum_{j=1}^{n}c(j)}ee^{t}.\]
  \item[(ii)] If $\sum_{j=1}^{n}c(j)=0$ and $\alpha\neq0$, then
\[{\rm circ}\!\paren{c}^{\dag}={\rm
circ}\!\paren{c(1)+\alpha, \ldots,
c(n)+\alpha}^{\dag}-\frac{1}{n^{2}\alpha}ee^{t}.\]
\end{enumerate}
\end{proposition}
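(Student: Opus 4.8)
The plan is to obtain both parts as applications of Theorem~\ref{T MPI expression sum circulant} with $K=2$. For part (i), write $s=\sum_{j=1}^{n}c(j)\neq 0$ and decompose $c=c_{1}+c_{2}$, where $c_{2}=\frac{s}{n}e$ is the constant vector carrying the full sum and $c_{1}=c-\frac{s}{n}e$ is its zero-sum complement, so that $c_{1}$ has components $c(j)-\frac{s}{n}$. I would then verify the hypothesis of Theorem~\ref{T MPI expression sum circulant}, namely ${\rm supp}(\lambda_{k})\cap{\rm supp}(\widetilde{\lambda_{k'}})=\emptyset$ for $k\neq k'$, using the two facts recorded just before the statement: ${\rm supp}(\sqrt{n}\overline{F}e)=\set{1}$ and $1\notin{\rm supp}(\sqrt{n}\overline{F}a)$ whenever $a$ has zero sum.

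The observation that makes the verification work is that the reflection $\rho$ preserves the component sum (it merely permutes entries) and fixes $e$. Hence $\rho(c_{2})=\frac{s}{n}e$ still has sum $s$, while $\rho(c_{1})$ still has zero sum. Consequently $\lambda_{2}$ and $\widetilde{\lambda_{2}}$ are supported on $\set{1}$, whereas $\lambda_{1}$ and $\widetilde{\lambda_{1}}$ both avoid the index $1$; the two required cross-intersections are therefore empty. Theorem~\ref{T MPI expression sum circulant} then gives ${\rm circ}(c)^{\dag}={\rm circ}(c_{1})^{\dag}+{\rm circ}(c_{2})^{\dag}$. It remains to compute ${\rm circ}(c_{2})^{\dag}$: since ${\rm circ}(e)=ee^{t}$ is the all-ones matrix, ${\rm circ}(c_{2})=\frac{s}{n}ee^{t}$, and the rank-one formula $\paren{xy^{*}}^{\dag}=\frac{1}{\norm{x}^{2}\norm{y}^{2}}yx^{*}$ recalled at the start of Section~\ref{S Formulations for matrices} yields $(ee^{t})^{\dag}=\frac{1}{n^{2}}ee^{t}$, whence ${\rm circ}(c_{2})^{\dag}=\frac{n}{s}\cdot\frac{1}{n^{2}}ee^{t}=\frac{1}{ns}ee^{t}$. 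Substituting $s=\sum_{j=1}^{n}c(j)$ gives exactly the formula in (i).

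For part (ii), rather than repeating the argument I would deduce it from (i) by a shift. Assuming $\sum_{j=1}^{n}c(j)=0$ and $\alpha\neq 0$, set $c'=c+\alpha e$, so $c'$ has components $c(j)+\alpha$ and sum $\sum_{j=1}^{n}c'(j)=n\alpha\neq0$. Applying part (i) to $c'$, whose zero-sum part is $c'-\frac{n\alpha}{n}e=c'-\alpha e=c$, gives ${\rm circ}(c')^{\dag}={\rm circ}(c)^{\dag}+\frac{1}{n\cdot n\alpha}ee^{t}$. Rearranging produces ${\rm circ}(c)^{\dag}={\rm circ}(c(1)+\alpha,\ldots,c(n)+\alpha)^{\dag}-\frac{1}{n^{2}\alpha}ee^{t}$, the desired identity; this is exactly the ``vice versa'' passage advertised in the introduction.

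I expect the only non-mechanical step to be the support verification in part (i), and within it the point that must be handled carefully is the role of $\rho$: the cross conditions involve the twisted spectra $\widetilde{\lambda_{k}}=\sqrt{n}\overline{F}\rho(c_{k})$ rather than $\lambda_{k}$, so one genuinely needs that $\rho$ leaves both the zero-sum property and the vector $e$ invariant. Everything else reduces to the short rank-one computation above.
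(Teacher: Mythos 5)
Your proof is correct and takes essentially the same route as the paper, which obtains both parts directly from Theorem~\ref{T MPI expression sum circulant} together with the two support facts stated immediately before the proposition (${\rm supp}\!\paren{\sqrt{n}\overline{F}e}=\set{1}$ and $1 \notin {\rm supp}\!\paren{\sqrt{n}\overline{F}a}$ for zero-sum $a$). Your verification of the $\rho$-twisted support conditions and the rank-one computation $\paren{ee^{t}}^{\dag}=\frac{1}{n^{2}}ee^{t}$ simply make explicit the details the paper leaves implicit.
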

By the previous proposition, it is very easy to obtain the
Moore--Penrose inverse in the case $\sum_{j=1}^{n}c(j)\neq0$ from
the case $\sum_{j=1}^{n}c(j)=0$, and vice versa. As a consequence,
in order to give explicit expressions, there is no need to
separately consider each of the two cases. This fact will simplify
future researchers in the subject.

Part (ii) of the following proposition can be used to compute
explicitly and easily the Moore--Penrose inverse of
\[{\rm
circ}\!\paren{a,\underbrace{b,\ldots,b}_{k},a,\underbrace{b,\ldots,b}_{k},
\ldots, a,\underbrace{b,\ldots,b}_{k}}\] for each $a, b \in
\mathbb{C}$ choosing $\alpha=\frac{a+kb}{k+1}$ and
$\beta=\frac{a-b}{k+1}$.

\begin{proposition}\label{P MPI circulant abbb}
Let $k$ and $q$ be positive integers and $n=q(k+1)$. The following
assertions hold:
\begin{enumerate}
  \item[(i)] If $C={\rm
circ}\!\paren{k,\underbrace{-1,\ldots,-1}_{k}, \ldots,
k,\underbrace{-1,\ldots,-1}_{k}}$, then $C^{2}=nC$ and
$C^{\dag}=\frac{1}{n^{2}}C$.
  \item[(ii)] If $\alpha, \beta \in \mathbb{C}\setminus\set{0}$, then \[{\rm
circ}\!\paren{\alpha+k\beta,\underbrace{\alpha-\beta,\ldots,\alpha-\beta}_{k},
\ldots,
\alpha+k\beta,\underbrace{\alpha-\beta,\ldots,\alpha-\beta}_{k}}^{\dag}=\frac{1}{\alpha
n^{2}}ee^{t}+\frac{1}{\beta n^{2}}C.\]
\end{enumerate}
\end{proposition}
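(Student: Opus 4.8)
The plan is to reduce both parts to the single algebraic identity $C^{2}=nC$ together with the fact that $C$ is real symmetric. First I would decompose $C$ conveniently: its generating vector equals $(k+1)s-e$, where $s$ is the indicator vector of the positions congruent to $1$ modulo $k+1$ (at those $q$ positions the entry is $(k+1)-1=k$, and elsewhere it is $0-1=-1$). Since ${\rm circ}(e_{j+1})=\Pi^{j}$ and ${\rm circ}(e)=ee^{t}$, this reads $C=(k+1)S-ee^{t}$ with $S=\sum_{m=0}^{q-1}\Pi^{m(k+1)}$. Writing $T=\Pi^{k+1}$, I would note $T^{q}=\Pi^{n}=I$ and $T^{m}e=e$, which give the three multiplication rules $S^{2}=qS$ (each residue class modulo $q$ is hit exactly $q$ times in $\sum_{m,m'}T^{m+m'}$), $S(ee^{t})=(ee^{t})S=q\,ee^{t}$, and $(ee^{t})^{2}=n\,ee^{t}$. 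Expanding $C^{2}=\paren{(k+1)S-ee^{t}}^{2}=(k+1)^{2}qS-2(k+1)q\,ee^{t}+n\,ee^{t}$ and substituting $(k+1)q=n$ then collapses to $C^{2}=(k+1)nS-n\,ee^{t}=nC$. This computation is the main obstacle; once the decomposition is in place it is purely formal.

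For the Moore--Penrose inverse in part (i), I would first observe that $C$ is symmetric: both $ee^{t}$ and $S=\sum_{m}T^{m}$ are symmetric (the set $\set{T^{m}}_{m=0}^{q-1}$ is closed under transposition because $T^{-m}=T^{q-m}$), so $C=C^{t}=C^{*}$. Given $C^{2}=nC$ and $C^{*}=C$, I would verify directly that $X=\frac{1}{n^{2}}C$ satisfies the four equations of Definition~\ref{D MPI}: one has $CX=XC=\frac{1}{n}C$, which is self-adjoint, while $CXC=\frac{1}{n}C^{2}=C$ and $XCX=\frac{1}{n^{4}}C^{3}=\frac{1}{n^{2}}C=X$ (using $C^{3}=n^{2}C$). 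By the uniqueness in Definition~\ref{D MPI} this yields $C^{\dag}=\frac{1}{n^{2}}C$.

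For part (ii), the key remark is that its circulant $C'$ splits as $C'=\alpha\,ee^{t}+\beta C$, since its generating vector is $\alpha e+\beta\paren{(k+1)s-e}$ (value $\alpha+k\beta$ at the positions congruent to $1$, and $\alpha-\beta$ elsewhere). I would then apply Theorem~\ref{T MPI expression sum} with $K=2$, $A_{1}=\alpha\,ee^{t}$ and $A_{2}=\beta C$. The hypotheses hold because $Ce=(k+1)Se-(ee^{t})e=(k+1)qe-ne=0$: this gives ${\rm R}(A_{1})={\rm R}(A_{1}^{*})={\rm span}\set{e}\subseteq{\rm N}(C)={\rm N}(A_{2})={\rm N}(A_{2}^{*})$, and taking adjoints the same relation $Ce=0$ yields $e^{t}C=0$, so ${\rm R}(A_{2})={\rm R}(A_{2}^{*})={\rm R}(C)\subseteq {\rm N}(A_{1})={\rm N}(A_{1}^{*})$. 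Hence $(C')^{\dag}=(\alpha\,ee^{t})^{\dag}+(\beta C)^{\dag}$. Finally, using $\paren{xy^{*}}^{\dag}=\frac{1}{\norm{x}^{2}\norm{y}^{2}}yx^{*}$ with $x=y=e$ (so $(ee^{t})^{\dag}=\frac{1}{n^{2}}ee^{t}$), the scalar rule $(\lambda A)^{\dag}=\lambda^{-1}A^{\dag}$, and part (i), I obtain $(\alpha\,ee^{t})^{\dag}=\frac{1}{\alpha n^{2}}ee^{t}$ and $(\beta C)^{\dag}=\frac{1}{\beta n^{2}}C$, which is exactly the claimed formula.
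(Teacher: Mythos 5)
Your proof is correct, but it takes a different route from the paper's on both parts. For part (i), the paper computes $C^{2}(1,j)$ entry by entry via the circulant product formula, splitting into the two cases of positions congruent to $1$ modulo $k+1$ and the rest; you instead introduce the decomposition $C=(k+1)S-ee^{t}$ with $S=\sum_{m=0}^{q-1}\Pi^{m(k+1)}$ and reduce $C^{2}=nC$ to the coordinate-free relations $S^{2}=qS$, $Se=qe$, $\paren{ee^{t}}^{2}=nee^{t}$ (the concluding verification of the four Penrose equations is the same in both). For part (ii), the paper simply invokes Proposition~\ref{P MPI circulant suma cero} (choosing $\alpha$ there as the mean $n\alpha/n$ of the generating vector), which in turn rests on the Fourier-based Theorem~\ref{T MPI expression sum circulant}; you bypass that machinery and apply Theorem~\ref{T MPI expression sum} directly with $A_{1}=\alpha ee^{t}$, $A_{2}=\beta C$, checking the range/null-space orthogonality by hand from $Ce=0$ and the symmetry of $C$. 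The paper's route is shorter given its established results and illustrates the intended use of the zero-sum/nonzero-sum conversion; yours is more self-contained (no spectral decomposition of circulants needed), and the single decomposition $C=(k+1)S-ee^{t}$ does triple duty, yielding $C^{2}=nC$, the symmetry of $C$, and $Ce=0$ — exactly the three facts both parts require.
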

\begin{proof}
For $j=1, k+2, 2k+3,3k+4, \ldots, n-k$,

\centerline{$C^{2}(1,j)=qk^{2}+qk=qk(k+1)=nk=nC(1,j)$.}

\noindent Otherwise,

\centerline{$C^{2}(1,j)=2q(-k)+n-2q=n-2q(k+1)=n-2n=-n=nC(1,j)$.}

\noindent  Hence, $C^{2}=nC$. The rest of part (i) follows from this
equality and Definition~\ref{D MPI}.

Taking into account that \[{\rm
circ}\!\paren{\alpha+k\beta,\underbrace{\alpha-\beta,\ldots,\alpha-\beta}_{k},
\ldots,
\alpha+k\beta,\underbrace{\alpha-\beta,\ldots,\alpha-\beta}_{k}}=\alpha
ee^{t}+\beta C,\] part (ii) follows from Proposition~\ref{P MPI
circulant suma cero}.
\end{proof}

\section{Moore--Penrose inverse of distance matrices of certain
graphs}\label{S MPI distance matrix graphs}

In this section, we give applications to distance matrices of
certain graphs. Specifically, we consider distance matrices of
weighted trees and of wheel graphs with an odd number of vertices.

\subsection{Moore--Penrose inverse of the distance matrix of a weighted tree}\label{SubS MPI distance matrix weighted tree}

Let $T = (V ,E)$ denotes a \emph{weighted tree} with the set of
\emph{vertices} $V=\set{1, \ldots , n}$ and the set $E$ of unordered
pairs \emph{edges} $(i, j)$, $i \neq j$. To each $i, j \in V$ is
assigned a \emph{weight} $w_{ij} \neq 0$ if $i \neq j$ and $(i, j)$
is an edge of $T$. If $i \neq j$ and $(i, j)$ is not an edge of $T$
then $w_{ij} = 0$. The \emph{Laplacian matrix} $L$ of $T$ is the $n
\times n$ positive definitive matrix given by $L(i,j)=-w_{ij}^{-1}$
if $i \neq j$ and $(i, j)$ is an edge of $T$, $L(i,j)=0$ if $i \neq
j$ and $(i, j)$ is not an edge of $T$, and $L(i,i)=-\sum_{j \neq
i}L(i,j)$. The \emph{distance matrix} $D$ of $T$ is the matrix with
$D(i,j)$ equal to the \emph{distance} between vertices $i$ and $j$,
defined to be the sum of the weights of the edges on the (unique)
$ij$-path. We set $D(i,i) = 0, i = 1, \ldots , n$. We denote the
degree of the vertex $i$ by $\delta(i)$ , $i = 1, \ldots , n$. Let
$\delta$ be the vector with components $\delta(1), \ldots ,
\delta(n)$. We set $\tau = 2e - \delta$.

Assume that $\sum_{j=1}^{n-1}w_{j}=0$ and that all the weights are
nonzero. By \cite[Theorem 11]{Kurata-Bapat (2016)},
\begin{equation}\label{E MPI D wtree L tau u}
D^{\dag}=-\frac{1}{2}L+u\tau^{t}+\tau u^{t},
\end{equation}
where
\begin{equation}\label{E MPI D wtree u}
u=\frac{1}{2}\paren{D^{\dag}e + \frac{e^{t}D^{\dag}e}{4}\tau}.
\end{equation}
From the proof of \cite[Theorem 11]{Kurata-Bapat (2016)}, ${\rm
N}(D)={\rm span}\{\tau\}$. Thus, by Corollary~\ref{C MPI expression
full rank}(iii),
\begin{equation}\label{E MPI D wtree 1}
D^{\dag}=\paren{D+\alpha\tau\tau^{t}}^{-1}-\frac{1}{\alpha\norm{\tau}^{4}}\tau\tau^{t}.
\end{equation}
or equivalently, multiplying both sides from the left by
$D+\alpha\tau\tau^{t}$, $D^{\dag}$ is the unique solution of the
equation
\begin{equation}\label{E MPI D wtree 2}
\paren{D+\alpha\tau\tau^{t}}X=I-\frac{1}{\norm{\tau}^{2}}\tau\tau^{t}.
\end{equation}
By Corollary~\ref{C 134 inverse}, the invertible matrix
$D+\alpha\tau\tau^{t}$ is a $\{1, 3, 4\}$-inverse of $D$.
Expressions (\ref{E MPI D wtree 1}) and (\ref{E MPI D wtree 2}) are
alternatives for (\ref{E MPI D wtree L tau u}) to obtain $D^{\dag}$.
In order to compute $u$ in (\ref{E MPI D wtree L tau u}), we note
that from (\ref{E MPI D wtree 1}),
\begin{equation}\label{E MPI D wtree Ddage}
D^{\dag}e=\paren{D+\alpha\tau\tau^{t}}^{-1}e-\frac{2}{\alpha\norm{\tau}^{4}}\tau,
\end{equation}
\noindent and from (\ref{E MPI D wtree 2}), $D^{\dag}e$ is the
unique solution of the equation

\centerline{$\paren{D+\alpha\tau\tau^{t}}x=e-\frac{2}{\norm{\tau}^{2}}\tau$,}

\noindent where $\alpha \neq 0$. If $\tau^{t}L\tau=\delta^{t}L\delta
\neq 0$, choosing $\alpha$ appropriately, from (\ref{E MPI D wtree
Ddage}) we get an expression of $u$ in (\ref{E MPI D wtree u}) that
do not involve $D^{\dag}$.

\begin{proposition}\label{P MPI D wtree u Ldelta}
Let $T$ be a weighted tree with distance matrix $D$, Laplacian
matrix $L$, degrees given by $\delta$ and $\tau = 2e - \delta$.
Assume that $\sum_{j=1}^{n-1}w_{j}=0$ and that all the weights are
nonzero. If $\tau^{t}L\tau \neq 0$, then the vector $u$ in (\ref{E
MPI D wtree u}) is given by
\begin{equation}\label{E MPI D wtree u e}
u=\frac{1}{2}\paren{\frac{1}{\norm{\tau}^{2}}L\tau-\frac{3\tau^{t}L\tau}{2\norm{\tau}^{4}}\tau}.
\end{equation}
\end{proposition}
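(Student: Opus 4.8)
The plan is to make the defining relation (\ref{E MPI D wtree u}) explicit by producing a closed form for $D^{\dag}e$ (and hence for $e^{t}D^{\dag}e$), and the natural tool is the representation (\ref{E MPI D wtree Ddage}) evaluated for a cleverly chosen $\alpha$. The engine of the computation is the classical tree identity $LDL=-2L$ (the same identity that underlies the Graham--Lov\'{a}sz formula and \cite{Kurata-Bapat (2016)}), from which I would first extract the vector identity
\[DL\tau=\norm{\tau}^{2}e-2\tau.\]
To get it, note that $LDL=-2L$ gives $L\paren{DL\tau+2\tau}=0$, so $DL\tau+2\tau\in{\rm N}(L)={\rm span}\set{e}$ (for a connected weighted tree with nonzero weights the Laplacian has kernel ${\rm span}\set{e}$). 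Writing $DL\tau+2\tau=ce$ and applying $\tau^{t}$, the relations $\tau^{t}D=0$ (because $D\tau=0$ and $D$ is symmetric) and $\tau^{t}e=2$ (the degrees of a tree sum to $2(n-1)$) force $2\norm{\tau}^{2}=2c$, i.e. $c=\norm{\tau}^{2}$.

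With this identity in hand I would use the hypothesis $\tau^{t}L\tau\neq0$ to choose $\alpha=\frac{2}{\tau^{t}L\tau}$. A direct check then gives
\[\paren{D+\alpha\tau\tau^{t}}\frac{1}{\norm{\tau}^{2}}L\tau=\frac{1}{\norm{\tau}^{2}}DL\tau+\frac{\alpha\paren{\tau^{t}L\tau}}{\norm{\tau}^{2}}\tau=\paren{e-\frac{2}{\norm{\tau}^{2}}\tau}+\frac{2}{\norm{\tau}^{2}}\tau=e,\]
so that, since $D+\alpha\tau\tau^{t}$ is invertible, $\paren{D+\alpha\tau\tau^{t}}^{-1}e=\frac{1}{\norm{\tau}^{2}}L\tau$. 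Substituting this into (\ref{E MPI D wtree Ddage}) and using $\frac{2}{\alpha}=\tau^{t}L\tau$ yields
\[D^{\dag}e=\frac{1}{\norm{\tau}^{2}}L\tau-\frac{\tau^{t}L\tau}{\norm{\tau}^{4}}\tau.\]

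To finish, from $e^{t}L\tau=\paren{Le}^{t}\tau=0$ and $e^{t}\tau=2$ I get $e^{t}D^{\dag}e=-\frac{2\tau^{t}L\tau}{\norm{\tau}^{4}}$, and feeding $D^{\dag}e$ and $e^{t}D^{\dag}e$ into (\ref{E MPI D wtree u}) produces
\[u=\frac{1}{2}\paren{\frac{1}{\norm{\tau}^{2}}L\tau-\frac{\tau^{t}L\tau}{\norm{\tau}^{4}}\tau-\frac{\tau^{t}L\tau}{2\norm{\tau}^{4}}\tau}=\frac{1}{2}\paren{\frac{1}{\norm{\tau}^{2}}L\tau-\frac{3\tau^{t}L\tau}{2\norm{\tau}^{4}}\tau},\]
which is exactly (\ref{E MPI D wtree u e}). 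The one genuine obstacle is the identity $DL\tau=\norm{\tau}^{2}e-2\tau$; once it is available the remainder is bookkeeping, and it is also the only place where the assumption $\tau^{t}L\tau\neq0$ is used (through the choice of $\alpha$). If one prefers to bypass $LDL=-2L$, the same conclusion follows by checking directly that $\frac{1}{\norm{\tau}^{2}}L\tau-\frac{\tau^{t}L\tau}{\norm{\tau}^{4}}\tau$ lies in ${\rm R}(D)=\set{\tau}^{\perp}$ and is sent by $D$ to $e-\frac{2}{\norm{\tau}^{2}}\tau=P_{{\rm R}(D)}e$, which characterizes $D^{\dag}e$.
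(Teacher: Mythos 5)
Your proof is correct and, from the choice $\alpha=\frac{2}{\tau^{t}L\tau}$ onward, coincides step by step with the paper's own proof: both verify $\paren{D+\alpha\tau\tau^{t}}\frac{1}{\norm{\tau}^{2}}L\tau=e$, feed this into (\ref{E MPI D wtree Ddage}), and substitute the resulting $D^{\dag}e$ into (\ref{E MPI D wtree u}). The one genuine difference is how the key identity is sourced: the paper simply cites \cite[Lemma 9]{Kurata-Bapat (2016)}, which gives the full matrix identity $DL=e\tau^{t}-2I$ (so $DL\tau=\norm{\tau}^{2}e-2\tau$ follows at once), whereas you derive that vector identity from $LDL=-2L$ together with ${\rm N}\!\paren{L}={\rm span}\set{e}$, $D\tau=0$ and $\tau^{t}e=2$; this buys a proof that leans on a weaker-looking and more widely known identity, at the cost of the extra kernel argument. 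Two points in your version deserve care. First, since the weights sum to zero, some weights are negative, so $L$ is \emph{not} positive semidefinite and the standard quadratic-form justification of ${\rm N}\!\paren{L}={\rm span}\set{e}$ fails; the claim is still true, but one should argue via $L=BW^{-1}B^{t}$ with $B$ the incidence matrix of the tree (full column rank) and $W$ the invertible diagonal matrix of weights, so that $Lx=0$ forces $B^{t}x=0$, i.e., $x\in{\rm span}\set{e}$. For the same reason, calling $LDL=-2L$ ``classical'' in this signed-weight setting is an overstatement: it is a known result for weighted trees (of the type in \cite{Bapat-Kirkland-Neumann (2005)}, or a consequence of the cited Lemma 9) and should be attributed rather than assumed. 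Second, your closing remark that the verification-style ending ``bypasses $LDL=-2L$'' is not accurate: checking that $D$ maps $\frac{1}{\norm{\tau}^{2}}L\tau-\frac{\tau^{t}L\tau}{\norm{\tau}^{4}}\tau$ to $P_{{\rm R}\!\paren{D}}e$ again requires computing $DL\tau$, which is exactly the identity in question. On the plus side, you make explicit the computation $e^{t}D^{\dag}e=-\frac{2\tau^{t}L\tau}{\norm{\tau}^{4}}$ (via $e^{t}L=0$ and $e^{t}\tau=2$), a step the paper leaves implicit in its final substitution.
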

\begin{proof}
By \cite[Lemma 9]{Kurata-Bapat (2016)}, $DL=e\tau^{t}-2I$. Hence,
\[\paren{D+\frac{2}{\tau^{t}L\tau}\tau\tau^{t}}\paren{\frac{1}{\norm{\tau}^{2}}L\tau}=e.\] From (\ref{E MPI D wtree Ddage}) with
$\alpha=\frac{2}{\tau^{t}L\tau}$ and the previous equality we
obtain,
\[D^{\dag}e=\frac{1}{\norm{\tau}^{2}}L\tau-\frac{\tau^{t}L\tau}{\norm{\tau}^{4}}\tau.\]
Replacing this expression for $D^{\dag}e$ in (\ref{E MPI D wtree
u}), we get (\ref{E MPI D wtree u e}).
\end{proof}

\subsection{Moore--Penrose inverses of distance matrices of wheel
graphs with an odd number of vertices}\label{SubS MPI dmwgo}

Let $n \geq 5$ be an odd integer. Let $W(n)$ be the wheel graph
having $n$ number of vertices, with the center labeled $1$, the
other vertices labeled $2$, ..., $n$ lie in a cycle of length $n-1$,
and $(i, i+1)$ is an edge. Without loss of generality, we fix this
labeling because any other labeling of $W(n)$ leads to a distance
matrix which is a permutation similar to $D$. If $u=(0, 1, 2,
\ldots, 2, 1) \in \mathbb{R}^{n-1}$, then the distance matrix $D$ of
the wheel graph is given by
\[D=\left(
     \begin{array}{cc}
       0 & e^{t} \\
       e & {\rm circ}(u) \\
     \end{array}
   \right).\]
The following proposition is about ${\rm null}(D)$ and $D^{\dag}$.
\begin{proposition}\label{P MPI dmwgo null space}
Let $n \geq 5$ odd. Let $D$ be the distance matrix of the wheel
graph with $n$ vertices and let $a=(0, -1, 1, \ldots, -1 , 1) \in
\mathbb{R}^{n}$. Then ${\rm null}(D)={\rm span}\{a\}$ and
\begin{equation}\label{E MPI dmwto D aat}
D^{\dag}=\paren{D+aa^{t}}^{-1}-\frac{1}{(n-1)^{2}}aa^{t}.
\end{equation}
\end{proposition}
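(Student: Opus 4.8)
The plan is to establish ${\rm null}(D) = {\rm span}\{a\}$ first, and then read off the formula for $D^{\dag}$ from Corollary~\ref{C MPI expression full rank}(iii). Throughout I write $m = n-1$ (which is even, since $n$ is odd), $C = {\rm circ}(u)$, and $a = \paren{0, b}^{t}$ with $b = \paren{(-1)^{j}}_{j=1}^{m}$, so that $e^{t}b = 0$ and $\norm{a}^{2} = m = n-1$. Writing a generic null vector as $x = \paren{x_{1}, y}^{t}$, the equation $Dx = 0$ splits into the two block conditions $e^{t}y = 0$ and $Cy = -x_{1}e$. Checking that $a$ itself lies in ${\rm null}(D)$ (the case $x_{1}=0$, $y=b$) thus reduces to $e^{t}b = 0$, which holds because $m$ is even, together with $Cb = 0$; and the full null-space computation hinges on understanding $C$.

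The heart of the argument, and the step I expect to be the main obstacle, is to show that ${\rm null}(C)$ is one-dimensional and spanned by $b$, since this is what pins down $\dim {\rm null}(D) = 1$. Because $C$ is circulant, its eigenvalues are the discrete-Fourier-transform values $\hat{u}(\omega) = \sum_{k=0}^{m-1} u(k+1)\,\omega^{k}$ as $\omega$ ranges over the $m$-th roots of unity. Substituting $u = (0,1,2,\ldots,2,1)$ and using $\sum_{k=0}^{m-1}\omega^{k} = 0$ for $\omega \neq 1$, the sum collapses to $\hat{u}(\omega) = -\omega - \omega^{-1} - 2 = -2\paren{1 + \cos\theta}$ for $\omega = e^{i\theta} \neq 1$, which vanishes precisely when $\omega = -1$. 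For $\omega = 1$ one has $\hat{u}(1) = \sum_{k} u(k) = 2m - 4 \neq 0$ for $n \geq 5$. Hence the only vanishing eigenvalue corresponds to the Nyquist mode $\omega = -1$, so ${\rm null}(C) = {\rm span}\{b\}$; in particular $Cb = 0$, which completes the verification that $Da = 0$.

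With ${\rm null}(C)$ in hand, I would close the null-space computation by the block analysis. Suppose $x_{1} \neq 0$. Since $Ce = (2m-4)e$, the equation $Cy = -x_{1}e$ has the particular solution $-\tfrac{x_{1}}{2m-4}e$, so its general solution is $y = -\tfrac{x_{1}}{2m-4}e + sb$ for some scalar $s$; but then $e^{t}y = -\tfrac{x_{1}m}{2m-4} \neq 0$ (using $e^{t}b = 0$), contradicting $e^{t}y = 0$. Therefore $x_{1} = 0$, forcing $y \in {\rm null}(C) = {\rm span}\{b\}$ and hence $x \in {\rm span}\{a\}$. Combined with $Da = 0$ this gives ${\rm null}(D) = {\rm span}\{a\}$ and ${\rm rank}(D) = n-1$.

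Finally, since $D$ is symmetric we have ${\rm null}(D^{*}) = {\rm null}(D)$, so Corollary~\ref{C MPI expression full rank}(iii) applies with $q = n$, $r = n-1$, $q-r = 1$, the single orthonormal vectors $f_{1} = g_{1} = a/\norm{a}$, and scalar $d_{1} = \norm{a}^{2} = n-1$. The invertible term then becomes $\paren{D + d_{1}\tfrac{a}{\norm{a}}\tfrac{a^{t}}{\norm{a}}}^{-1} = \paren{D + aa^{t}}^{-1}$, and the correction term becomes $\tfrac{1}{d_{1}}\tfrac{a}{\norm{a}}\tfrac{a^{t}}{\norm{a}} = \tfrac{1}{(n-1)^{2}}aa^{t}$, which is exactly (\ref{E MPI dmwto D aat}). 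The only delicate parts are the circulant eigenvalue collapse and the block case analysis; once these are settled the conclusion is immediate.
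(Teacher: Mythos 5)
Your proof is correct, but the null-space computation takes a genuinely different route from the paper's. The paper works entry-wise on the equations $x^{t}D(i,:)=0$: it first extracts $\sum_{j=2}^{n}x(j)=0$ from the hub row, then sums all rows to force $x(1)=0$, and finally solves the resulting three-term recurrence $2x(j)=-x(j-1)-x(j+1)$ explicitly, obtaining $x(j)=(-1)^{j}x(2)$; no spectral theory appears. You instead exploit the block structure of $D$ (hub row/column $e^{t}$, $e$ and circulant block $C={\rm circ}(u)$), diagonalize $C$ by the discrete Fourier transform to see that its symbol $-2(1+\cos\theta)$ vanishes only at the mode $\omega=-1$ (available precisely because $m=n-1$ is even, matching the parity hypothesis), so ${\rm null}(C)={\rm span}\set{b}$, and then a short case analysis on $x_{1}$ (using $Ce=(2m-4)e$ and $e^{t}b=0$) pins down ${\rm null}(D)={\rm span}\set{a}$; all the computations there ($\hat{u}(1)=2m-4$, the collapse of the symbol, the contradiction $e^{t}y=-\tfrac{x_{1}m}{2m-4}\neq0$) check out. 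Your route makes the kernel dimension transparent (it is the number of vanishing DFT coefficients of $u$), connects to the circulant machinery the paper itself develops in Section~\ref{S MPI of circulant matrices} (cf.\ (\ref{E MPI circulant lambda c})), and would generalize to other hub-plus-circulant distance matrices; the paper's recurrence argument is more elementary, staying in real arithmetic and producing the null vector's entries directly. The concluding step is identical in both proofs: Corollary~\ref{C MPI expression full rank}(iii) with $f_{1}=g_{1}=a/\norm{a}$ and $d_{1}=n-1$, where your bookkeeping $d_{1}g_{1}f_{1}^{t}=aa^{t}$ and $\tfrac{1}{d_{1}}f_{1}g_{1}^{t}=\tfrac{1}{(n-1)^{2}}aa^{t}$ is exactly right (the symmetry of $D$ justifying $g_{1}=f_{1}$, as you note).
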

\begin{proof}
Clearly, ${\rm span}\{a\} \subseteq {\rm null}(D)$. Let $x \in {\rm
null}(D)$. Since $x^{t}D(1,:)=0$, $\sum_{j=2}^{n}x(j)=0$. We have
$\sum_{i=1}^{n}D(i,:)=(n-1, 3+2(n-3), \ldots, 3+2(n-3))$. Since
$x^{t}D(i,:)=0$ for $i=2, \ldots, n$,
\begin{equation}\label{E T MPI dmwgo 1}
x(2)=-\frac{x(n)+x(3)}{2},
\end{equation}
\begin{equation}\label{E T MPI dmwgo 2}
x(j)=-\frac{x(j-1)+x(j+1)}{2}, \text{ for $j=3, \ldots, n-1$},
\end{equation}
\begin{equation}\label{E T MPI dmwgo 3}
x(n)=-\frac{x(n-1)+x(2)}{2},
\end{equation}
\noindent and

\centerline{$0=x^{t}\paren{\sum_{i=1}^{n}D(i,:)}=\paren{n-1}x(1)+\paren{3+2(n-3)}\sum_{j=2}^{n}x(j)=\paren{n-1}x(1)$.}

\noindent From the last equality, $x(1)=0$, and from (\ref{E T MPI
dmwgo 1})-(\ref{E T MPI dmwgo 2}),
$x(j)=\paren{-1}^{j}\paren{\paren{j-1}x(2)+\paren{j-2}x(n)}$ for
$j=3, \ldots, n$. In particular,
$x(n)=-\paren{\paren{n-1}x(2)+\paren{n-2}x(n)}$. Therefore,
$x(n)=-x(2)$ and, more generally, $x(j)=\paren{-1}^{j}x(2)$ for
$j=3, \ldots, n$. This shows that ${\rm null}(D)={\rm span}\{a\}$.
Now, from Corollary~\ref{C MPI expression full rank}(iii), we obtain
(\ref{E MPI dmwto D aat}).
\end{proof}

In \cite{Balaji-Bapat-Goel (2020)}, the equality
\begin{equation}\label{E MPI dmwto BBG}
D^{\dag}=-\frac{1}{2}\widetilde{L}+\frac{4}{n-1}ww^{t}
\end{equation}
where $\widetilde{L}$ is a semidefinite positive matrix, ${\rm
rank}\!\paren{\widetilde{L}}=n-2$, $\widetilde{L}e=0$ and
$w=\frac{1}{4}\paren{5-n,1,\ldots,1}$, is obtained. The matrix
$\widetilde{L}$ can be viewed as a special Laplacian matrix.
Equating (\ref{E MPI dmwto D aat}) and (\ref{E MPI dmwto BBG}) we
get
\begin{equation}\label{E MPI dmwto = BBG}
\paren{D+aa^{t}}^{-1}=-\frac{1}{2}\widetilde{L}+\frac{4}{n-1}ww^{t}+\frac{1}{(n-1)^{2}}aa^{t}.
\end{equation}
One expression of $\paren{D+aa^{t}}^{-1}$ can be derived from
(\ref{E MPI dmwto = BBG}) and the expression of $\widetilde{L}$
given in \cite[Definition 1]{Balaji-Bapat-Goel (2020)}. Next, we
give a closed-form expression of each entry of
$\paren{D+aa^{t}}^{-1}$ and hence, of each entry of $D^{\dag}$,
based on numerical experiments. The principal result is
Theorem~\ref{T MPI dmwgo}. We establish before three auxiliary
lemmas. The first of them is about sums. The other two lemmas are
about the entries of $\paren{D+aa^{t}}^{-1}$ and describe properties
of them.

\begin{lemma}\label{L MPI dmwgo sums}
Let $n \geq 5$ odd. The following equalities hold:
\begin{enumerate}
  \item[(i)] If $n=5+4m$ for some $m \in \mathbb{N}$, $m \geq 0$, then \[\sum_{k=1, k\, {\rm
  even}}^{(n-3)/2}k=\frac{\paren{n-5}\paren{n-1}}{16}=\frac{n^{2}-6n+5}{16},\]
\[\sum_{k=1, k\, {\rm
odd}}^{(n-3)/2}k=\frac{\paren{n-1}^{2}}{16}=\frac{n^{2}-2n+1}{16},\]
\[\sum_{k=1, k\, {\rm
even}}^{(n-3)/2}k^{2}=\frac{\paren{n-5}\paren{n-1}\paren{n-3}}{48}=\frac{n^{3}-9n^{2}+23n-15}{48}\]
and
\[\sum_{k=1, k\, {\rm
odd}}^{(n-3)/2}k^{2}=\frac{\paren{n-1}\paren{n-3}\paren{n+1}}{48}=\frac{n^{3}-3n^{2}-n+3}{48}.\]
  \item[(ii)] If $n=7+4m$ for some $m \in \mathbb{N}$, $m \geq 0$, then \[\sum_{k=2, k\, {\rm
  even}}^{(n-3)/2}k=\frac{\paren{n-3}\paren{n+1}}{16}=\frac{n^{2}-2n-3}{16},\] \[\sum_{k=1, k\, {\rm
odd}}^{(n-3)/2}k=\frac{\paren{n-3}^{2}}{16}=\frac{n^{2}-6n+9}{16},\]
\[\sum_{k=1, k\, {\rm
even}}^{(n-3)/2}k^{2}=\frac{\paren{n-3}\paren{n+1}\paren{n-1}}{48}=\frac{n^{3}-3n^{2}-n+3}{48}\]
and
\[\sum_{k=1, k\, {\rm
odd}}^{(n-3)/2}k^{2}=\frac{\paren{n-3}\paren{n-1}\paren{n-5}}{48}=\frac{n^{3}-9n^{2}+23n-15}{48}.\]
\end{enumerate}
\end{lemma}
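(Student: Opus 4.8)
The plan is to reparametrize each of the four sums in terms of the integer $m$ and reduce everything to the elementary closed forms $\sum_{j=1}^{p}j=\frac{p(p+1)}{2}$ and $\sum_{j=1}^{p}j^{2}=\frac{p(p+1)(2p+1)}{6}$. The point that forces the two-case split is the parity of the upper limit $N:=(n-3)/2$: substituting $n=5+4m$ gives $N=2m+1$ (odd), whereas $n=7+4m$ gives $N=2m+2$ (even). Since the number of even and of odd summands in the range $1\le k\le N$ is governed by this parity, the two congruence classes $n\equiv1$ and $n\equiv3\pmod 4$ must be handled separately (together they exhaust all odd $n\ge 5$).

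For case (i), where $N=2m+1$, the even values of $k$ are $2,4,\ldots,2m$ ($m$ of them) and the odd values are $1,3,\ldots,2m+1$ ($m+1$ of them). I would then write $\sum_{k\,{\rm even}}k=\sum_{j=1}^{m}2j=m(m+1)$ and $\sum_{k\,{\rm odd}}k=\sum_{j=1}^{m+1}(2j-1)=(m+1)^{2}$, and for the squares $\sum_{k\,{\rm even}}k^{2}=4\sum_{j=1}^{m}j^{2}=\frac{2m(m+1)(2m+1)}{3}$ together with $\sum_{k\,{\rm odd}}k^{2}=\sum_{j=1}^{m+1}(2j-1)^{2}=\frac{(m+1)(2m+1)(2m+3)}{3}$. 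Substituting $m=(n-5)/4$, so that $m+1=(n-1)/4$, $2m+1=(n-3)/2$ and $2m+3=(n+1)/2$, each of these expressions collapses to the stated polynomial in $n$ over $16$ or $48$.

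Case (ii) proceeds identically in spirit. Now $N=2m+2$, so the even values $2,4,\ldots,2m+2$ and the odd values $1,3,\ldots,2m+1$ each contribute $m+1$ terms; the same elementary formulas give $(m+1)(m+2)$, $(m+1)^{2}$, $\frac{2(m+1)(m+2)(2m+3)}{3}$ and $\frac{(m+1)(2m+1)(2m+3)}{3}$ for the four sums. Substituting $m=(n-7)/4$, so that $m+1=(n-3)/4$, $m+2=(n+1)/4$, $2m+1=(n-5)/2$ and $2m+3=(n-1)/2$, yields the claimed values.

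I do not anticipate any conceptual obstacle: the entire content is the correct counting of even and odd summands (which is exactly why the statement is split by $n\bmod 4$) followed by routine algebraic simplification, the only delicate point being to verify that each rational expression clears to the asserted integer-coefficient form. A formal induction on $m$ would give an alternative but strictly more tedious route, so I would prefer the direct evaluation above.
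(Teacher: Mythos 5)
Your proposal is correct and follows essentially the same route as the paper's proof: reindex each sum via $n=5+4m$ or $n=7+4m$ so the upper limit becomes $2m+1$ or $2m+2$, count the even and odd summands, apply the elementary formulas for $\sum j$ and $\sum j^{2}$, and substitute $m$ back in terms of $n$. The only cosmetic difference is that you invoke the closed form $\sum_{j=1}^{p}(2j-1)^{2}=\frac{p(2p-1)(2p+1)}{3}$ directly, whereas the paper expands $(2j-1)^{2}=4j^{2}-4j+1$ and recombines; the computations are otherwise identical.
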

\begin{proof}
(i): Assume that $n=5+4m$ for some $m \in \mathbb{N}$, $m \geq 0$.
We have
\begin{align*}
\sum_{k=2, k\, {\rm
  even}}^{(n-3)/2}k&=\sum_{k=1, k\, {\rm
even}}^{2m+1}k=\sum_{k=1}^{m}2k=2\frac{m\paren{m+1}}{2}\\
&=\frac{n-5}{4}\paren{\frac{n-5}{4}+1}=\frac{\paren{n-5}\paren{n-1}}{16}=\frac{n^{2}-6n+5}{16},
\end{align*}
\begin{align*}
\sum_{k=1, k\, {\rm odd}}^{(n-3)/2}k&=\sum_{k=1, k\, {\rm
odd}}^{2m+1}k=\sum_{k=1}^{m+1}\paren{2k-1}=2\frac{\paren{m+1}\paren{m+2}}{2}-\paren{m+1}\\
&=\paren{\frac{n-5}{4}+1}\paren{\frac{n-5}{4}+2}-\paren{\frac{n-5}{4}+1}=\frac{\paren{n-1}^{2}}{16}=\frac{n^{2}-2n+1}{16},
\end{align*}
\begin{align*}
\sum_{k=2, k\, {\rm
  even}}^{(n-3)/2}k^{2}&=\sum_{k=1, k\, {\rm
even}}^{2m+1}k^{2}=\sum_{k=1}^{m}\paren{2k}^{2}=4\frac{m\paren{m+1}\paren{2m+1}}{6}\\
&=\frac{2}{3}\frac{n-5}{4}\paren{\frac{n-5}{4}+1}\paren{\frac{n-5}{2}+1}=\frac{\paren{n-5}\paren{n-1}\paren{n-3}}{48}=\frac{n^{3}-9n^{2}+23n-15}{48}
\end{align*} and
\begin{align*}
\sum_{k=1, k\, {\rm odd}}^{(n-3)/2}k^{2}&=\sum_{k=1, k\, {\rm
odd}}^{2m+1}k^{2}=\sum_{k=1}^{m+1}\paren{2k-1}^{2}=4\sum_{k=1}^{m+1}k^{2}-4\sum_{k=1}^{m+1}k+m+1\\
&=4\frac{\paren{m+1}\paren{m+2}\paren{2\paren{m+1}+1}}{6}-4\frac{\paren{m+1}\paren{m+2}}{2}+m+1\\
&=\paren{m+1}\paren{\paren{m+2}\frac{4}{3}m+1}=\paren{\frac{n-5}{4}+1}\paren{\paren{\frac{n-5}{4}+2}\frac{n-5}{3}+1}\\
&=\frac{\paren{n-1}\paren{n-3}\paren{n+1}}{48}=\frac{n^{3}-3n^{2}-n+3}{48}.
\end{align*}
(ii): Assume now that $n=7+4m$ for some $m \in \mathbb{N}$, $m \geq
0$. Then
\begin{align*}
\sum_{k=1, k\, {\rm even}}^{(n-3)/2}k&=\sum_{k=1, k\, {\rm
even}}^{2m+2}k=2\sum_{k=1}^{m+1}k=\paren{m+1}\paren{m+2}\\
&=\paren{\frac{n-7}{4}+1}\paren{\frac{n-7}{4}+2}=\frac{\paren{n-3}\paren{n+1}}{16}=\frac{n^{2}-2n-3}{16},
\end{align*}
\begin{align*}
\sum_{k=1, k\, {\rm odd}}^{(n-3)/2}k&=\sum_{k=1, k\, {\rm
odd}}^{2m+2}k=\sum_{k=1}^{m+1}\paren{2k-1}\\
&=2\frac{\paren{m+1}\paren{m+2}}{2}-\paren{m+1}=\paren{m+1}^{2}\\
&=\paren{\frac{n-7}{4}+1}^{2}=\frac{\paren{n-3}^{2}}{16}=\frac{n^{2}-6n+9}{16},
\end{align*}
\begin{align*}
\sum_{k=1, k\, {\rm even}}^{(n-3)/2}k^{2}&=\sum_{k=1, k\, {\rm
even}}^{2m+2}k^{2}=\sum_{k=1}^{m+1}\paren{2k}^{2}=4\sum_{k=1}^{m+1}k^{2}\\
&=4\frac{\paren{m+1}\paren{m+2}\paren{2\paren{m+1}+1}}{6}=\frac{2}{3}\paren{\frac{n-7}{4}+1}\paren{\frac{n-7}{4}+2}\paren{\frac{n-7}{2}+3}\\
&=\frac{\paren{n-3}\paren{n+1}\paren{n-1}}{48}=\frac{n^{3}-3n^{2}-n+3}{48}
\end{align*} and
\begin{align*}
\sum_{k=1, k\, {\rm odd}}^{(n-3)/2}k^{2}&=\sum_{k=1, k\, {\rm
odd}}^{2m+2}k^{2}=\sum_{k=1}^{m+1}\paren{2k-1}^{2}=4\sum_{k=1}^{m+1}k^{2}-4\sum_{k=1}^{m+1}k+m+1\\
&=4\frac{\paren{m+1}\paren{m+2}\paren{2\paren{m+1}+1}}{6}-4\frac{\paren{m+1}\paren{m+2}}{2}+m+1\\
&=\frac{n^{3}-9n^{2}+23n-15}{48}=\frac{\paren{n-3}\paren{n-1}\paren{n-5}}{48}.
\end{align*}
\end{proof}

The vector $z$ of the following two lemmas will appear in the
expression of $D^{\dag}$ given in Theorem~\ref{T MPI dmwgo} below.

\begin{lemma}\label{L MPI dmwgo z1}
Let $z=\paren{z(0), \ldots, z(n-2)}$ where
\[z(0)=\frac{-n^{3}+3n^{2}+n+9}{12},\]
\[z(k)=z(n-1-k)=\left\{
    \begin{array}{ll}
      \frac{-6\paren{n-1}k^{2}+6\paren{n-1}^{2}k-n^{3}+3n^{2}+n+9}{12}, & \hbox{$1 \leq k \leq \frac{n-3}{2}$, $k$ even,} \\
      \frac{6\paren{n-1}k^{2}-6\paren{n-1}^{2}k+n^{3}-3n^{2}+5n-15}{12}, & \hbox{$1 \leq k \leq \frac{n-3}{2}$, $k$ odd,}
    \end{array}
  \right.
\] and \[z((n-1)/2)=\left\{
                       \begin{array}{ll}
                         \frac{n^{3}-3n^{2}+11n+15}{24}, & \hbox{$n=5+4m$, $m=0, 1,
\ldots$,} \\
                         \frac{-n^{3}+3n^{2}+n-27}{24}, & \hbox{$n=7+4m$, $m=0, 1,
\ldots$.}
                       \end{array}
                     \right.
\] Then
\begin{equation}\label{E L MPI dmwgo z1 sum even n-3}
\sum_{k=1, k\, {\rm even}}^{(n-3)/2}z(k)=\left\{
                       \begin{array}{ll}
                         \frac{n^{3}-3n^{2}-n-45}{48}, & \hbox{$n=5+4m$, $m=0, 1,
\ldots$,} \\
                         \frac{2n^{3}-6n^{2}+10n-30}{48}, & \hbox{$n=7+4m$, $m=0, 1,
\ldots$,}
                       \end{array}
                     \right.
\end{equation}
\begin{equation}\label{E L MPI dmwgo z1 sum odd n-3}
\sum_{k=1, k\, {\rm odd}}^{(n-3)/2}z(k)=\left\{
                       \begin{array}{ll}
                         -\frac{n-1}{4}, & \hbox{$n=5+4m$, $m=0, 1,
\ldots$,} \\
                         \frac{n^{3}-3n^{2}-13n+39}{48}, & \hbox{$n=7+4m$, $m=0, 1,
\ldots$,}
                       \end{array}
                     \right.
\end{equation}
\begin{equation}\label{E L MPI dmwgo z1 sum even odd n-2}
\sum_{k=0, k\, {\rm even}}^{n-2}z(k)=-\sum_{k=0, k\, {\rm
odd}}^{n-2}z(k)=\frac{n-1}{2},
\end{equation}
\begin{equation}\label{E L MPI dmwgo z1 2sum even n-2 1 n-2}
2\sum_{l=2, l\,{\rm
even}}^{n-3}z(l)-z(1)-z(n-2)=\paren{n-1}\paren{n-2},
\end{equation}
\begin{equation}\label{E L MPI dmwgo z1 2sum odd n-2 0 n-3}
2\sum_{l=1, l\,{\rm odd}}^{n-4}z(l)-z(0)-z(n-3)=-\paren{n-1},
\end{equation}
\begin{equation}\label{E L MPI dmwgo z1 k k-1 k+1 even}
2z(k)+z(k-1)+z(k+1)=2\paren{n-1} \text{ for $2 \leq k \leq n-3$ and
$k$ even,}
\end{equation}
\begin{equation}\label{E L MPI dmwgo z1 2sum  k-1 k+1 even}
2\sum_{l=0, l\,{\rm even}, l \neq
k}^{n-2}z(l)-z(k-1)-z(k+1)=-\paren{n-1} \text{ for $2 \leq k \leq
n-3$ and $k$ even,}
\end{equation}
\begin{equation}\label{E L MPI dmwgo z1 k k-1 k+1 odd}
2z(k)+z(k-1)+z(k+1)=0 \text{ for $1 \leq k \leq n-3$ and $k$ odd,}
\end{equation} and
\begin{equation}\label{E L MPI dmwgo z1 2sum  k-1 k+1 odd}
2\sum_{l=0, l\,{\rm odd}, l \neq
k}^{n-2}z(l)-z(k-1)-z(k+1)=-\paren{n-1} \text{for $1 \leq k \leq
n-3$ and $k$ odd}.
\end{equation}
\end{lemma}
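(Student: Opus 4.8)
The plan is to verify every identity by direct computation, using three features of $z$: the reflection symmetry $z(k)=z(n-1-k)$, which preserves parity because $n-1$ is even; the fact that on $1\le k\le(n-3)/2$ each branch of $z(k)$ is an explicit quadratic in $k$ whose coefficients depend only on the parity of $k$; and the closed sums of Lemma~\ref{L MPI dmwgo sums}. I would prove a few core identities first and deduce the rest formally.

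First I would establish the partial sums (\ref{E L MPI dmwgo z1 sum even n-3}) and (\ref{E L MPI dmwgo z1 sum odd n-3}). On $1\le k\le(n-3)/2$ the value $z(k)$ has the form $a(n)k^{2}+b(n)k+c(n)$, so each of these sums is a linear combination of $\sum k$, $\sum k^{2}$ and the number of terms; substituting the closed forms of Lemma~\ref{L MPI dmwgo sums} and simplifying gives the claimed values. Because Lemma~\ref{L MPI dmwgo sums} distinguishes $n=5+4m$ from $n=7+4m$ (these differ in the parity of the top index $(n-3)/2$, hence in how many even and odd summands occur), both identities split into the same two cases.

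Next come the three-term recurrences (\ref{E L MPI dmwgo z1 k k-1 k+1 even}) and (\ref{E L MPI dmwgo z1 k k-1 k+1 odd}). For $k$ in the interior, with $k-1,k,k+1$ all in $\{1,\dots,(n-3)/2\}$, substituting the two quadratic branches makes the $k^{2}$ and $k$ terms cancel identically, and with $A+B=6(n-1)$ (where $A,B$ are the even and odd constant terms) the remaining constant equals $2(n-1)$ in the even case and $0$ in the odd case. The delicate points are the indices next to the center: at $k=(n-3)/2$ one neighbor is the special value $z((n-1)/2)$, while at $k=(n-1)/2$ both neighbors equal $z((n-3)/2)$ by symmetry. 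There I would substitute the explicit central value, choosing its branch according as $n=5+4m$ or $n=7+4m$ (which fixes the parity of $(n-1)/2$), and check the relation directly; I expect this boundary bookkeeping to be the main obstacle.

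Then I would fold the full sums (\ref{E L MPI dmwgo z1 sum even odd n-2}): using $z(k)=z(n-1-k)$ to map the block $[(n+1)/2,n-2]$ onto $[1,(n-3)/2]$, the even (resp. odd) sum over $\{0,\dots,n-2\}$ equals $z(0)$, twice the corresponding partial sum from (\ref{E L MPI dmwgo z1 sum even n-3})/(\ref{E L MPI dmwgo z1 sum odd n-3}), and the central term $z((n-1)/2)$, the latter falling into the even or odd sum according to the parity of $(n-1)/2$; collecting these per case yields $\pm(n-1)/2$. The remaining identities then follow formally: for (\ref{E L MPI dmwgo z1 2sum k-1 k+1 even}) and (\ref{E L MPI dmwgo z1 2sum k-1 k+1 odd}) I write $\sum_{l\ne k}=\sum_{l}-z(k)$, insert (\ref{E L MPI dmwgo z1 sum even odd n-2}), and eliminate $z(k-1)+z(k+1)$ through the recurrences just proved; and the boundary versions (\ref{E L MPI dmwgo z1 2sum even n-2 1 n-2}) and (\ref{E L MPI dmwgo z1 2sum odd n-2 0 n-3}) follow the same way, using symmetry to identify $z(n-2)=z(1)$ and $z(n-3)=z(2)$ together with the single algebraic facts $z(0)+z(1)=-(n-1)(n-3)/2$ and $2z(1)+z(0)+z(2)=0$ (the latter being (\ref{E L MPI dmwgo z1 k k-1 k+1 odd}) at $k=1$).
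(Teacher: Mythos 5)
Your plan follows the paper's own proof essentially step for step: the partial sums (\ref{E L MPI dmwgo z1 sum even n-3})--(\ref{E L MPI dmwgo z1 sum odd n-3}) via Lemma~\ref{L MPI dmwgo sums} split into the cases $n=5+4m$ and $n=7+4m$, the recurrences (\ref{E L MPI dmwgo z1 k k-1 k+1 even}) and (\ref{E L MPI dmwgo z1 k k-1 k+1 odd}) by direct substitution with exactly the same near-center case analysis (including reflecting $k>(n-1)/2$ back to the left half), the full sums (\ref{E L MPI dmwgo z1 sum even odd n-2}) by folding with $z(k)=z(n-1-k)$, and the remaining ``$2\sum$'' identities derived formally by writing $\sum_{l\neq k}=\sum_{l}-z(k)$ and inserting the previous results. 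The only cosmetic differences are the order of the steps and that, for (\ref{E L MPI dmwgo z1 2sum even n-2 1 n-2}) and (\ref{E L MPI dmwgo z1 2sum odd n-2 0 n-3}), you invoke $z(0)+z(1)=-(n-1)(n-3)/2$ and the $k=1$ case of the odd recurrence where the paper simply substitutes the explicit values of $z(0)$, $z(1)$, $z(2)$ --- both correct, since $z(0)$ coincides with the even-branch quadratic at $k=0$, a fact the paper also uses implicitly.
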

\begin{proof}
We first note that if $n=5+4m$ for some $m \geq 0$, then
$\frac{n-3}{2}$ is odd and $\frac{n-1}{2}$ is even, whereas if
$n=7+4m$ for some $m \geq 0$, then $\frac{n-3}{2}$ is even and
$\frac{n-1}{2}$ is odd.

Assume that $n=5+4m$ for some $m \geq 0$. By Lemma~\ref{L MPI dmwgo
sums}(i),
\begin{align}\label{E L MPI dmwgo z1 5 sum even n-3}
\sum_{k=1, k\, {\rm
even}}^{(n-3)/2}z(k)=&\frac{1}{12}\paren{-6\paren{n-1}\sum_{k=1, k\,
{\rm even}}^{(n-3)/2}k^{2}+6\paren{n^{2}-2n+1}\sum_{k=1,
k\, {\rm even}}^{(n-3)/2}k+\paren{-n^{3}+3n^{2}+n+9}\frac{n-5}{4}}\notag\\
=&\frac{1}{12}\paren{-6\paren{n-1}\frac{n^{3}-9n^{2}+23n-15}{48}+6\paren{n^{2}-2n+1}\frac{n^{2}-6n+5}{16}}\notag\\
&+\frac{1}{12}\paren{\paren{-n^{3}+3n^{2}+n+9}\frac{n-5}{4}}\notag\\
=&\frac{n^{3}-3n^{2}-n-45}{48}
\end{align}
and
\begin{align}\label{E L MPI dmwgo z1 5 sum odd n-3}
\sum_{k=1, k\, {\rm
odd}}^{(n-3)/2}z(k)=&\frac{-1}{12}\paren{-6\paren{n-1}\sum_{k=1, k\,
{\rm odd}}^{(n-3)/2}k^{2}+6\paren{n^{2}-2n+1}\sum_{k=1, k\, {\rm odd}}^{(n-3)/2}k+\paren{-n^{3}+3n^{2}-5n+15}\frac{n-1}{4}}\notag\\
=&\frac{-1}{12}\paren{-6\paren{n-1}\frac{n^{3}-3n^{2}-n+3}{48}+6\paren{n^{2}-2n+1}\frac{n^{2}-2n+1}{16}}\notag\\
&+\frac{-1}{12}\paren{\paren{-n^{3}+3n^{2}-5n+15}\frac{n-1}{4}}\notag\\
=&-\frac{n-1}{4}.
\end{align}
Using (\ref{E L MPI dmwgo z1 5 sum even n-3}) and (\ref{E L MPI
dmwgo z1 5 sum odd n-3}) we get
\begin{align*}
\sum_{k=0, k\, {\rm even}}^{n-2}z(k)&=z(0)+\sum_{k=1, k\, {\rm
even}}^{(n-3)/2}z(k)+z((n-1)/2)+\sum_{k=(n+1)/2, k\, {\rm
even}}^{n-2}z(k)\notag\\
&=\frac{-n^{3}+3n^{2}+n+9}{12}+\frac{n^{3}-3n^{2}-n-45}{24}+\frac{n^{3}-3n^{2}+11n+15}{24}\notag\\
&=\frac{n-1}{2}
\end{align*}
and
\begin{align*}
\sum_{k=0, k\, {\rm odd}}^{n-2}z(k)&=\sum_{k=1, k\, {\rm
odd}}^{(n-3)/2}z(k)+\sum_{k=(n+1)/2, k\, {\rm
odd}}^{n-2}z(k)=-\frac{n-1}{2}.
\end{align*}
Assume now that $n=7+4m$ for some $m \geq 0$. By Lemma~\ref{L MPI
dmwgo sums}(ii),
\begin{align}\label{E L MPI dmwgo z1 7 sum even n-3}
\sum_{k=1, k\, {\rm
even}}^{(n-3)/2}z(k)=&\frac{1}{12}\paren{-6\paren{n-1}\sum_{k=1, k\,
{\rm even}}^{(n-3)/2}k^{2}+6\paren{n^{2}-2n+1}\sum_{k=1,
k\, {\rm even}}^{(n-3)/2}k+\paren{-n^{3}+3n^{2}+n+9}\frac{n-3}{4}}\notag\\
=&\frac{1}{12}\paren{-6\paren{n-1}\frac{n^{3}-3n^{2}-n+3}{48}+6\paren{n^{2}-2n+1}\frac{n^{2}-2n-3}{16}}\notag\\
&+\frac{1}{12}\paren{\paren{-n^{3}+3n^{2}+n+9}\frac{n-3}{4}}\notag\\
=&\frac{2n^{3}-6n^{2}+10n-30}{48}
\end{align}
and
\begin{align}\label{E L MPI dmwgo z1 7 sum odd n-3}
\sum_{k=1, k\, {\rm
odd}}^{(n-3)/2}z(k)=&\frac{-1}{12}\paren{-6\paren{n-1}\sum_{k=1, k\,
{\rm odd}}^{(n-3)/2}k^{2}+6\paren{n^{2}-2n+1}\sum_{k=1, k\, {\rm odd}}^{(n-3)/2}k+\paren{-n^{3}+3n^{2}-5n+15}\frac{n-3}{4}}\notag\\
=&\frac{1}{12}\paren{-6\paren{n-1}\frac{n^{3}-9n^{2}+23n-15}{48}+6\paren{n^{2}-2n+1}\frac{n^{2}-6n+9}{16}}\notag\\
&+\frac{1}{12}\paren{\paren{-n^{3}+3n^{2}-5n+15}\frac{n-3}{4}}\notag\\
=&\frac{n^{3}-3n^{2}-13n+39}{48}.
\end{align}
From (\ref{E L MPI dmwgo z1 7 sum even n-3}) and (\ref{E L MPI dmwgo
z1 7 sum odd n-3}) we obtain,
\begin{align*}
\sum_{k=0, k\, {\rm even}}^{n-2}z(k)&=z(0)+\sum_{k=1, k\, {\rm
even}}^{(n-3)/2}z(k)+\sum_{k=(n+1)/2, k\, {\rm
even}}^{n-2}z(k)\notag\\
&=\frac{-n^{3}+3n^{2}+n+9}{12}+\frac{2n^{3}-6n^{2}+10n-30}{24}=\frac{n-1}{2}
\end{align*}
and
\begin{align*}
\sum_{k=0, k\, {\rm odd}}^{n-2}z(k)&=\sum_{k=1, k\, {\rm
odd}}^{(n-3)/2}z(k)+z((n-1)/2)+\sum_{k=(n+1)/2, k\, {\rm
odd}}^{n-2}z(k)\notag\\
&=\frac{n^{3}-3n^{2}-13n+39}{24}+\frac{-n^{3}+3n^{2}+n-27}{24}=-\frac{n-1}{2}.
\end{align*}
Hence, we have proved (\ref{E L MPI dmwgo z1 sum even n-3}), (\ref{E
L MPI dmwgo z1 sum odd n-3}) and (\ref{E L MPI dmwgo z1 sum even odd
n-2}).

For each $n \geq 5$ odd, using (\ref{E L MPI dmwgo z1 sum even odd
n-2}) we obtain (\ref{E L MPI dmwgo z1 2sum even n-2 1 n-2}) and
(\ref{E L MPI dmwgo z1 2sum odd n-2 0 n-3})
\begin{align*}
2\sum_{l=2, l\,{\rm even}}^{n-3}z(l)-z(1)-z(n-2)&=2\sum_{l=0,
l\,{\rm even}}^{n-3}z(l)-2z(0)-z(1)-z(n-2)\\
&=2\sum_{l=0,
l\,{\rm even}}^{n-3}z(l)-2z(0)-2z(1)\\
&=n-1-\frac{-n^{3}+3n^{2}+n+9}{6}-\frac{n^{3}-9n^{2}+23n-27}{6}\\
&=\paren{n-1}\paren{n-2}
\end{align*}
and
\begin{align*}
2\sum_{l=1, l\,{\rm odd}}^{n-4}z(l)-z(0)-z(n-3)=&2\sum_{l=1, l\,{\rm odd}}^{n-2}z(l)-2z(n-2)-z(0)-z(n-3)\\
&=2\sum_{l=1, l\,{\rm odd}}^{n-2}z(l)-2z(1)-z(0)-z(2)\\
=&1-n-2\frac{6\paren{n-1}-6\paren{n-1}^{2}+n^{3}-3n^{2}+5n-15}{12}\\
&-\frac{-n^{3}+3n^{2}+n+9}{12}-\frac{-24\paren{n-1}+12\paren{n-1}^{2}-n^{3}+3n^{2}+n+9}{12}\\
&=-\paren{n-1}.
\end{align*}

Assume that $2 \leq k \leq n-3$ and $k$ even. We note that $k-1$,
$k+1$, $n-k-2$, and $n-k$ are odd and $n-k-1$ is even. If $k+1 <
\frac{n-1}{2}$, then
\begin{align*}
2z(k)+z(k-1)+z(k+1)=&2\frac{-6\paren{n-1}k^{2}+6\paren{n-1}^{2}k-n^{3}+3n^{2}+n+9}{12}\\
&+\frac{6\paren{n-1}\paren{k-1}^{2}-6\paren{n-1}^{2}\paren{k-1}+n^{3}-3n^{2}+5n-15}{12}\\
&+\frac{6\paren{n-1}\paren{k+1}^{2}-6\paren{n-1}^{2}\paren{k+1}+n^{3}-3n^{2}+5n-15}{12}\\
&=2\paren{n-1}.
\end{align*}
If $k-1 > \frac{n-1}{2}$, then
\begin{align*}
2z(k)+z(k-1)+z(k+1)=&2z(n-k-1)+z(n-k)+z(n-k-2)\\
=&\frac{-12\paren{n-1}\paren{n-k-1}^{2}+12\paren{n-1}^{2}\paren{n-k-1}-2n^{3}+6n^{2}+2n+18}{12}\\
&+\frac{6\paren{n-1}\paren{n-k}^{2}-6\paren{n-1}^{2}\paren{n-k}+n^{3}-3n^{2}+5n-15}{12}\\
&+\frac{6\paren{n-1}\paren{n-k-2}^{2}-6\paren{n-1}^{2}\paren{n-k-2}+n^{3}-3n^{2}+5n-15}{12}\\
=&2\paren{n-1}.
\end{align*}
If $n=5+4m$ for some $m \geq 0$ and $k = \frac{n-1}{2}$, then
\begin{align*}
2z(k)+z(k-1)+z(k+1)=&2z((n-1)/2)+z((n-3)/2)+z((n-3)/2+1)\\
=&2\paren{z((n-1)/2)+z((n-3)/2)}\\
=&\frac{n^{3}-3n^{2}+11n+15}{12}\\
&+\frac{3\paren{n-1}\paren{n-3}^{2}-6\paren{n-1}^{2}\paren{n-3}+2n^{3}-6n^{2}+10n-30}{12}\\
=&2\paren{n-1}.
\end{align*}
If $n=7+4m$ for some $m \geq 0$ and $k+1=\frac{n-1}{2}$, then
\begin{align*}
2z(k)+z(k-1)+z(k+1)=&2z((n-3)/2)+z((n-5)/2)+z((n-1)/2)\\
=&2\frac{-6\paren{n-1}\paren{\frac{n-3}{2}}^{2}+6\paren{n-1}^{2}\frac{n-3}{2}-n^{3}+3n^{2}+n+9}{12}\\
&+\frac{6\paren{n-1}\paren{\frac{n-5}{2}}^{2}-6\paren{n-1}^{2}\frac{n-5}{2}+n^{3}-3n^{2}+5n-15}{12}\\
&+\frac{-n^{3}+3n^{2}+n-27}{24}\\
=&2\paren{n-1}.
\end{align*}
If $n=7+4m$ for some $m \geq 0$ and $k-1=\frac{n-1}{2}$, using the
above equality we get
\begin{align*}
2z(k)+z(k-1)+z(k+1)=&2z((n+1)/2)+z((n-1)/2)+z((n+3)/2)\\
=&2z((n-3)/2)+z((n-1)/2)+z((n-5)/2)=2\paren{n-1}.
\end{align*}
Thus, (\ref{E L MPI dmwgo z1 k k-1 k+1 even}) is proved. Using
(\ref{E L MPI dmwgo z1 sum even odd n-2}) and (\ref{E L MPI dmwgo z1
k k-1 k+1 even}) we obtain (\ref{E L MPI dmwgo z1 2sum  k-1 k+1
even}):
\begin{align*}
2\sum_{l=0, l\,{\rm even}, l \neq
k}^{n-2}z(l)-z(k-1)-z(k+1)&=2\sum_{l=0, l\,{\rm
even}}^{n-2}z(l)-2z(k)-z(k-1)-z(k+1)\\
&=n-1-2\paren{n-1}=-\paren{n-1}.
\end{align*}

Assume that $1 \leq k \leq n-3$, and $k$ odd. We note that $k-1$,
$k+1$, $n-k-2$, and $n-k$ are even, and $n-k-1$ is odd. If $k+1 <
\frac{n-1}{2}$, then
\begin{align*}
2z(k)+z(k-1)+z(k+1)=&2\frac{6\paren{n-1}k^{2}-6\paren{n-1}^{2}k+n^{3}-3n^{2}+5n-15}{12}\\
&+\frac{-6\paren{n-1}\paren{k-1}^{2}+6\paren{n-1}^{2}\paren{k-1}-n^{3}+3n^{2}+n+9}{12}\\
&+\frac{-6\paren{n-1}\paren{k+1}^{2}+6\paren{n-1}^{2}\paren{k+1}-n^{3}+3n^{2}+n+9}{12}\\
=&0.
\end{align*}
If $k-1 > \frac{n-1}{2}$, then
\begin{align*}
2z(k)+z(k-1)+z(k+1)=&2z(n-k-1)+z(n-k)+z(n-k-2)\\
=&2\frac{6\paren{n-1}\paren{n-k-1}^{2}-6\paren{n-1}^{2}\paren{n-k-1}+n^{3}-3n^{2}+5n-15}{12}\\
&+\frac{-6\paren{n-1}\paren{n-k}^{2}+6\paren{n-1}^{2}\paren{n-k}-n^{3}+3n^{2}+n+9}{12}\\
&+\frac{-6\paren{n-1}\paren{n-k-2}^{2}+6\paren{n-1}^{2}\paren{n-k-2}-n^{3}+3n^{2}+n+9}{12}\\
=&0.
\end{align*}
If $n=5+4m$ for some $m \geq 0$ and $k+1=\frac{n-1}{2}$, then
\begin{align*}
2z(k)+z(k-1)+z(k+1)=&2z((n-3)/2)+z((n-5)/2)+z((n-1)/2)\\
=&2\frac{6\paren{n-1}\paren{\frac{n-3}{2}}^{2}-6\paren{n-1}^{2}\paren{\frac{n-3}{2}}+n^{3}-3n^{2}+5n-15}{12}\\
&+\frac{-6\paren{n-1}\paren{\frac{n-5}{2}}^{2}+6\paren{n-1}^{2}\paren{\frac{n-5}{2}}-n^{3}+3n^{2}+n+9}{12}\\
&+\frac{n^{3}-3n^{2}+11n+15}{24}\\
=&0.
\end{align*}
If $n=5+4m$ for some $m \geq 0$ and $k-1=\frac{n-1}{2}$, using the
previous equality
\begin{align*}
2z(k)+z(k-1)+z(k+1)=&2z((n-2)/2+1)+z((n-1)/2)+z((n-1)/2+2)\\
=&2z(n-(n-1)/2-2)+z((n-1)/2)+z(n-(n-1)/2-3)\\
=&2z((n-3)/2)+z((n-1)/2)+z((n-5)/2)=0.
\end{align*}
If $n=7+4m$ for some $m \geq 0$ and $k=\frac{n-1}{2}$, then
\begin{align*}
2z(k)+z(k-1)+z(k+1)=&2z((n-1)/2)+z((n-3)/2)+z((n-2)/2+1)\\
=&2z((n-1)/2)+2z((n-3)/2)\\
=&2\frac{-n^{3}+3n^{2}+n-27}{24}\\
&+2\frac{-6\paren{n-1}\paren{\frac{n-3}{2}}^{2}+6\paren{n-1}^{2}\paren{\frac{n-3}{2}}-n^{3}+3n^{2}+n+9}{12}\\
=&0.
\end{align*}
This proves (\ref{E L MPI dmwgo z1 k k-1 k+1 odd}). From (\ref{E L
MPI dmwgo z1 sum even odd n-2}) and (\ref{E L MPI dmwgo z1 k k-1 k+1
odd}) we get (\ref{E L MPI dmwgo z1 2sum  k-1 k+1 odd}):
\begin{align*}
2\sum_{l=0, l\,{\rm odd}, l \neq
k}^{n-2}z(l)-z(k-1)-z(k+1)&=2\sum_{l=0, l\,{\rm
odd}}^{n-2}z(l)-2z(k)-z(k-1)-z(k+1)\\
&=2\paren{-\frac{n-1}{2}}-0=-\paren{n-1}.
\end{align*}
\end{proof}

\begin{lemma}\label{L MPI dmwgo z2}
Let $z$ as in Lemma~\ref{L MPI dmwgo z1}. Then \[e^{t}{\rm
circ}(z)=0\] and \[{\rm circ}(1, 0, 3, 1, \ldots, 3, 1, 3, 0){\rm
circ}(z)=\paren{n-1}^{2}I-\paren{n-1}ee^{t}.\]
\end{lemma}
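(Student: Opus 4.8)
The plan is to exploit that both factors are circulant matrices of order $n-1$, so their product ${\rm circ}(b){\rm circ}(z)$, where $b=(1,0,3,1,\ldots,3,1,3,0)$ denotes the first generating vector, is again circulant. Writing ${\rm circ}(b){\rm circ}(z)={\rm circ}(c)$, the generating vector is the circular convolution $c(l)=\sum_{k=0}^{n-2}b(k)\,z\!\paren{(l-k)\bmod(n-1)}$, all indices read modulo $n-1$. Since the target $\paren{n-1}^{2}I-\paren{n-1}ee^{t}$ is itself the circulant with generating vector $\paren{(n-1)(n-2),\,-(n-1),\ldots,-(n-1)}$, it suffices to prove $c(0)=(n-1)(n-2)$ and $c(l)=-(n-1)$ for $1\le l\le n-2$. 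The first assertion $e^{t}{\rm circ}(z)=0$ is the easy one: every column of a circulant has the same sum $\sum_{k=0}^{n-2}z(k)$, and by (\ref{E L MPI dmwgo z1 sum even odd n-2}) this equals $\frac{n-1}{2}-\frac{n-1}{2}=0$.

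For the convolution I would record the explicit entries of $b$: $b(0)=1$, $b(1)=b(n-2)=0$, $b(k)=3$ for even $k$ with $2\le k\le n-3$, and $b(k)=1$ for odd $k$ with $3\le k\le n-4$. Reindexing $c(l)=\sum_{j=0}^{n-2}b\!\paren{(l-j)\bmod(n-1)}\,z(j)$ and grouping the $z(j)$ by the value of the coefficient $b(l-j)$, the bulk of the sum is governed by the two global sums $\sum_{k\,{\rm even}}z(k)=\frac{n-1}{2}$ and $\sum_{k\,{\rm odd}}z(k)=-\frac{n-1}{2}$ of (\ref{E L MPI dmwgo z1 sum even odd n-2}): the coefficient $b(l-j)$ equals $3$ exactly on the $z(j)$ of one parity class and $1$ on the other, apart from the three exceptional positions $j=l,\,l-1,\,l+1$ arising from $b(0),b(1),b(n-2)$. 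Carrying out this bookkeeping gives $c(l)=(n-1)-\corch{2z(l)+z(l-1)+z(l+1)}$ when $l$ is even and $c(l)=-(n-1)-\corch{2z(l)+z(l-1)+z(l+1)}$ when $l$ is odd, with all indices modulo $n-1$.

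It then remains to evaluate the three-term combination $2z(l)+z(l-1)+z(l+1)$, which is exactly what the recurrences in Lemma~\ref{L MPI dmwgo z1} supply. For even $l$ with $2\le l\le n-3$, equation (\ref{E L MPI dmwgo z1 k k-1 k+1 even}) gives the value $2(n-1)$, whence $c(l)=(n-1)-2(n-1)=-(n-1)$; for odd $l$ with $1\le l\le n-4$, equation (\ref{E L MPI dmwgo z1 k k-1 k+1 odd}) gives $0$, whence $c(l)=-(n-1)$. The two delicate indices are the diagonal $l=0$ and the wrap-around $l=n-2$, where $l-1$ or $l+1$ leaves the range $\set{0,\ldots,n-2}$. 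For $l=0$ I would use $z(-1)=z(n-2)$ together with the boundary identity (\ref{E L MPI dmwgo z1 2sum even n-2 1 n-2}), which rearranges to $2z(0)+z(1)+z(n-2)=-(n-1)(n-3)$ and hence gives $c(0)=(n-1)+(n-1)(n-3)=(n-1)(n-2)$; for $l=n-2$ I would use $z(n-1)=z(0)$ and the symmetry $z(k)=z(n-1-k)$ to rewrite $2z(n-2)+z(n-3)+z(0)=2z(1)+z(0)+z(2)$, which is $0$ by (\ref{E L MPI dmwgo z1 k k-1 k+1 odd}) at $k=1$, so $c(n-2)=-(n-1)$. Assembling the cases yields ${\rm circ}(c)=(n-1)^{2}I-(n-1)ee^{t}$.

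The only genuine obstacle is the index bookkeeping: keeping the parities straight modulo $n-1$, correctly isolating the exceptional coefficients of $b$, and treating the two boundary indices where the three-term window wraps around. This is precisely why Lemma~\ref{L MPI dmwgo z1} was stated with both the interior recurrences (\ref{E L MPI dmwgo z1 k k-1 k+1 even}) and (\ref{E L MPI dmwgo z1 k k-1 k+1 odd}) and the separate boundary identities (\ref{E L MPI dmwgo z1 2sum even n-2 1 n-2}) and (\ref{E L MPI dmwgo z1 2sum odd n-2 0 n-3}); the symmetry $z(k)=z(n-1-k)$ is what closes the wrap-around case at $l=n-2$.
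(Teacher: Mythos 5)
Your proof is correct and takes essentially the same approach as the paper: both expand the product of the two circulant matrices coefficient by coefficient and close each coefficient with exactly the identities prepared in Lemma~\ref{L MPI dmwgo z1}. The only difference is notational bookkeeping --- you phrase the expansion as a circular convolution and use the three-term recurrences (\ref{E L MPI dmwgo z1 k k-1 k+1 even}) and (\ref{E L MPI dmwgo z1 k k-1 k+1 odd}) together with the parity sums (\ref{E L MPI dmwgo z1 sum even odd n-2}), handling the wrap-around index via the symmetry $z(k)=z(n-1-k)$, whereas the paper expands ${\rm circ}(1,0,3,1,\ldots,3,1,3,0)$ in powers of $\Pi_{n-1}$ and groups terms so that its boundary identities (\ref{E L MPI dmwgo z1 2sum even n-2 1 n-2}) and (\ref{E L MPI dmwgo z1 2sum odd n-2 0 n-3}) appear directly; the two are trivially equivalent.
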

\begin{proof}
Using Lemma~\ref{L MPI dmwgo z1}, \[e^{t}{\rm
circ}(z)=\sum_{k=0}^{n-2}z(k)=\sum_{k=0, k\, {\rm
even}}^{n-2}z(k)+\sum_{k=0, k\, {\rm
odd}}^{n-2}z(k)=\frac{n-1}{2}-\frac{n-1}{2}=0.\] For the other
equality we have,
\begin{align*}
{\rm circ}&(1, 0, 3, 1, \ldots, 3, 1, 3, 0){\rm
circ}(z)=\\
=&\paren{\Pi_{n-1}^{0}+3\paren{\Pi_{n-1}^{2}+\Pi_{n-1}^{4}+\ldots+\Pi_{n-1}^{n-5}+\Pi_{n-1}^{n-3}}+\paren{\Pi_{n-1}^{3}+\Pi_{n-1}^{5}+\ldots+\Pi_{n-1}^{n-6}+\Pi_{n-1}^{n-4}}}\\
&\paren{\sum_{k=0}^{n-2}z(k)\Pi_{n-1}^{k}}\\
=&\paren{ee^{t}+2\paren{\Pi_{n-1}^{2}+\Pi_{n-1}^{4}+\ldots+\Pi_{n-1}^{n-5}+\Pi_{n-1}^{n-3}}-\Pi_{n-1}^{1}-\Pi_{n-1}^{n-2}}\paren{\sum_{k=0}^{n-2}z(k)\Pi_{n-1}^{k}}\\
=&2\sum_{k=0}^{n-2}z(k)\sum_{k'=2, k', {\rm
even}}^{n-3}\Pi_{n-1}^{k+k'}-\sum_{k=0}^{n-2}z(k)\Pi_{n-1}^{k+1}-\sum_{k=0}^{n-2}z(k)\Pi_{n-1}^{n+k-2}\\
=&\paren{2\sum_{l=2, l\,{\rm
even}}^{n-3}z(l)}\Pi_{n-1}^{0}+\sum_{k=2, k\,{\rm
even}}^{n-3}\paren{2\sum_{l=0, l\,{\rm even}, l \neq
k}^{n-2}z(l)}\Pi_{n-1}^{k}\\
&+\sum_{k=1, k\,{\rm odd}}^{n-3}\paren{2\sum_{l=0, l\,{\rm odd}, l
\neq k}^{n-2}z(l)}\Pi_{n-1}^{k}+\paren{2\sum_{l=1,
l\,{\rm odd}}^{n-4}z(l)}\Pi_{n-1}^{n-2}\\
&-\paren{z(n-2)\Pi_{n-1}^{0}+\sum_{k=1}^{n-2}z(k-1)\Pi_{n-1}^{k}}-\paren{\sum_{k=0}^{n-3}z(k+1)\Pi_{n-1}^{k}+z(0)\Pi_{n-1}^{n-2}}\\
=&\paren{2\sum_{l=2, l\,{\rm
even}}^{n-3}z(l)-z(1)-z(n-2)}\Pi_{n-1}^{0}\\
&+\sum_{k=2, k\,{\rm even}}^{n-3}\paren{2\sum_{l=0, l\,{\rm even}, l
\neq
k}^{n-2}z(l)-z(k-1)-z(k+1)}\Pi_{n-1}^{k}\\
&+\sum_{k=1, k\,{\rm odd}}^{n-3}\paren{2\sum_{l=0, l\,{\rm odd}, l
\neq k}^{n-2}z(l)-z(k-1)-z(k+1)}\Pi_{n-1}^{k}\\
&+\paren{2\sum_{l=1, l\,{\rm
odd}}^{n-4}z(l)-z(0)-z(n-3)}\Pi_{n-1}^{n-2}.
\end{align*}
Now, applying Lemma~\ref{L MPI dmwgo z1}, we obtain \[{\rm circ}(1,
0, 3, 1, \ldots, 3, 1, 3, 0){\rm
circ}(z)=\paren{n-1}\paren{n-2}I-\paren{n-1}\sum_{k=1}^{n-2}\Pi_{n-1}^{k}=\paren{n-1}^{2}I-\paren{n-1}ee^{t}.\]
\end{proof}

Now we are in position to give our expression for $D^{\dag}$ using
the vector $z$ of Lemma~\ref{L MPI dmwgo z1}.

\begin{theorem}\label{T MPI dmwgo}
Let $n \geq 5$ odd. Let $D$ be the distance matrix of the wheel
graph with $n$ vertices. Let $a=(0, -1, 1, \ldots, -1 , 1) \in
\mathbb{R}^{n}$ and $z$ as in Lemma~\ref{L MPI dmwgo z1}. Let $v=(1,
-1, \ldots, 1, -1) \in \mathbb{R}^{n-1}$. Then
\begin{equation}\label{E T MPI dmwgo D+aat inv}
   \paren{D+aa^{t}}^{-1}=\frac{1}{\paren{n-1}^{2}}\left(
     \begin{array}{cc}
       -2\paren{n-1}\paren{n-3} & (n-1)e^{t} \\
       (n-1)e & {\rm circ}(z) \\
     \end{array}
   \right)
\end{equation}
and
\begin{equation}\label{E MPI dmwgo expression with A z}
D^{\dag}=\frac{1}{\paren{n-1}^{2}}\left(
     \begin{array}{cc}
       -2\paren{n-1}\paren{n-3} & (n-1)e^{t} \\
       (n-1)e & {\rm circ}(z+v) \\
     \end{array}
   \right).
\end{equation}
\end{theorem}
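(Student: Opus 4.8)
The plan is to reduce everything to the first displayed identity (\ref{E T MPI dmwgo D+aat inv}). Indeed, Proposition~\ref{P MPI dmwgo null space} already supplies the invertibility of $D+aa^{t}$ together with the relation (\ref{E MPI dmwto D aat}), namely $D^{\dag}=\paren{D+aa^{t}}^{-1}-\frac{1}{(n-1)^{2}}aa^{t}$; so once (\ref{E T MPI dmwgo D+aat inv}) is in hand, the expression (\ref{E MPI dmwgo expression with A z}) for $D^{\dag}$ follows by a single subtraction. To attack (\ref{E T MPI dmwgo D+aat inv}) I would first put $D+aa^{t}$ in the $1+(n-1)$ block form inherited from $D=\left(\begin{array}{cc} 0 & e^{t}\\ e & {\rm circ}(u)\end{array}\right)$, with $u=(0,1,2,\ldots,2,1)$. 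Since $a(1)=0$, the matrix $aa^{t}$ has vanishing first row and column, and its lower-right $(n-1)\times(n-1)$ block is $\tilde a\tilde a^{t}$, where $\tilde a=(-1,1,\ldots,-1,1)$ is the tail of $a$. The key observation is that, because $n-1$ is even, $\tilde a\tilde a^{t}$ depends only on the parity of the difference of its indices, hence equals the circulant ${\rm circ}(v)$. Consequently $D+aa^{t}=\left(\begin{array}{cc} 0 & e^{t}\\ e & {\rm circ}(u+v)\end{array}\right)$, and a direct check gives $u+v=(1,0,3,1,\ldots,3,1,3,0)$ --- precisely the generating vector appearing in Lemma~\ref{L MPI dmwgo z2}.

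With this identification, the verification of (\ref{E T MPI dmwgo D+aat inv}) becomes a block multiplication against the candidate $M=\left(\begin{array}{cc} -2(n-1)(n-3) & (n-1)e^{t}\\ (n-1)e & {\rm circ}(z)\end{array}\right)$. I would compute the four blocks of $\paren{D+aa^{t}}M$ and show the product is $(n-1)^{2}I_{n}$. The $(1,1)$ block is $e^{t}(n-1)e=(n-1)^{2}$, since $e\in\mathbb{R}^{n-1}$; the $(1,2)$ block is $e^{t}{\rm circ}(z)=0$ by Lemma~\ref{L MPI dmwgo z2}. For the $(2,1)$ block I would use that every row of ${\rm circ}(u+v)$ sums to $\sum(u+v)=\sum u=2(n-3)$, so ${\rm circ}(u+v)e=2(n-3)e$ and the block equals $-2(n-1)(n-3)e+(n-1)\,2(n-3)e=0$. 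The $(2,2)$ block is $(n-1)ee^{t}+{\rm circ}(u+v){\rm circ}(z)$, and here the second identity of Lemma~\ref{L MPI dmwgo z2},
\[{\rm circ}(1,0,3,1,\ldots,3,1,3,0)\,{\rm circ}(z)=(n-1)^{2}I-(n-1)ee^{t},\]
makes the $ee^{t}$ terms cancel and leaves $(n-1)^{2}I$. Hence $\paren{D+aa^{t}}M=(n-1)^{2}I_{n}$, which is exactly (\ref{E T MPI dmwgo D+aat inv}).

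Finally I would pass to $D^{\dag}$ through (\ref{E MPI dmwto D aat}): subtracting $\frac{1}{(n-1)^{2}}aa^{t}$ alters only the lower-right block, combining ${\rm circ}(z)$ with the circulant ${\rm circ}(v)=\tilde a\tilde a^{t}$ to produce the lower-right block of (\ref{E MPI dmwgo expression with A z}), while the remaining blocks are unchanged. The whole theorem is therefore a clean block computation: the genuine work --- the combinatorial sum identities of Lemma~\ref{L MPI dmwgo z1}, packaged into the product formula of Lemma~\ref{L MPI dmwgo z2} --- has already been carried out. I expect the main obstacle to be purely organizational, namely correctly recognizing that $\tilde a\tilde a^{t}={\rm circ}(v)$ and that $u+v$ is the exact generating vector of Lemma~\ref{L MPI dmwgo z2}, after which the four block identities line up with no further computation.
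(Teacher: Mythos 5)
Your proposal is, in structure and in every substantive step, the paper's own proof: the same block identification $aa^{t}=\left(\begin{array}{cc} 0 & 0\\ 0 & {\rm circ}(v)\end{array}\right)$ and $D+aa^{t}=\left(\begin{array}{cc} 0 & e^{t}\\ e & {\rm circ}(u+v)\end{array}\right)$ with $u+v=(1,0,3,1,\ldots,3,1,3,0)$, the same candidate matrix multiplied out in blocks, the two identities of Lemma~\ref{L MPI dmwgo z2} used exactly where you use them, the $(2,1)$ block killed by the row-sum computation ${\rm circ}(u+v)e=2(n-3)e$ (which the paper leaves implicit but is the same argument), and the same final passage to $D^{\dag}$ via Proposition~\ref{P MPI dmwgo null space}.

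However, the one step you state loosely --- ``combining ${\rm circ}(z)$ with ${\rm circ}(v)$ to produce the lower-right block of (\ref{E MPI dmwgo expression with A z})'' --- is precisely where a careful computation does \emph{not} return the displayed statement. Equation (\ref{E MPI dmwto D aat}) \emph{subtracts} $\frac{1}{(n-1)^{2}}aa^{t}$, and since the lower-right block of $aa^{t}$ is ${\rm circ}(v)$, the lower-right block of $D^{\dag}$ comes out as $\frac{1}{(n-1)^{2}}\paren{{\rm circ}(z)-{\rm circ}(v)}=\frac{1}{(n-1)^{2}}{\rm circ}(z-v)$, not $\frac{1}{(n-1)^{2}}{\rm circ}(z+v)$; the two differ by $\frac{2}{(n-1)^{2}}{\rm circ}(v)\neq 0$. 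The sign can be checked at $n=5$: there $z=(-3,-1,5,-1)$ and $v=(1,-1,1,-1)$, so ${\rm circ}(z-v)={\rm circ}(-4,0,4,0)$ annihilates the tail $(-1,1,-1,1)$ of $a$, as the lower-right block of $D^{\dag}$ must (since ${\rm N}\!\paren{D^{\dag}}={\rm N}\!\paren{D}={\rm span}\set{a}$), whereas ${\rm circ}(z+v)={\rm circ}(-2,-2,6,-2)$ applied to $(-1,1,-1,1)$ gives $-8\neq 0$. So the flaw is not in your method but in the statement as printed; the paper's own proof glosses over the same subtraction with the phrase ``we obtain (\ref{E MPI dmwgo expression with A z}),'' and formula (\ref{E MPI dmwgo expression with A z}) should read ${\rm circ}(z-v)$ (keeping $v=(1,-1,\ldots,1,-1)$ as defined). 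Any complete write-up of your last step would surface this and should correct it rather than reproduce the stated display.
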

\begin{proof}
We have
\begin{equation}\label{E T MPI dmwgo aat}
aa^{t}=\left(
     \begin{array}{cc}
       0 & 0 \\
       0 & {\rm circ}(v) \\
     \end{array}
   \right)
\end{equation} and \[D+aa^{t}=\left(
     \begin{array}{cc}
       0 & e^{t} \\
       e & {\rm circ}(u+v)\\
     \end{array}
   \right)=\left(
     \begin{array}{cc}
       0 & e^{t} \\
       e & {\rm circ}(1, 0, 3, 1,
\ldots, 3, 1, 3, 0) \\
     \end{array}
   \right).\] Let \[X=\left(
     \begin{array}{cc}
       -2\paren{n-1}\paren{n-3} & (n-1)e^{t} \\
       (n-1)e & {\rm circ}(z) \\
     \end{array}
   \right).\] Since
\[\paren{D+aa^{t}}X=\left(
     \begin{array}{cc}
       \paren{n-1}^{2} & e^{t}{\rm circ}(z) \\
       0 & (n-1)ee^{t}+{\rm circ}(1, 0, 3, 1, \ldots, 3, 1, 3, 0){\rm
circ}(z) \\
     \end{array}
   \right),\]
applying Lemma~\ref{L MPI dmwgo z2}, we get
$\paren{D+aa^{t}}X=\paren{n-1}^{2}I$. This shows (\ref{E T MPI dmwgo
D+aat inv}). By Proposition~\ref{P MPI dmwgo null space}, (\ref{E T
MPI dmwgo D+aat inv}) and (\ref{E T MPI dmwgo aat}), we obtain
(\ref{E MPI dmwgo expression with A z}).
\end{proof}

We end this section with properties of $\paren{D+aa^{t}}^{-1}$,
$D^{\dag}$, and the matrix $\widetilde{L}$ of (\ref{E MPI dmwto
BBG}).

\begin{proposition}\label{P MPI dmwgo properties 123D}
Let $n \geq 5$ odd. Let $D$ be the distance matrix of the wheel
graph with $n$ vertices. Let $a=(0, -1, 1, \ldots, -1 , 1) \in
\mathbb{R}^{n}$. Let $w=\frac{1}{4}\paren{5-n,1,\ldots,1}$ and
$\widetilde{L}$ be such that
\[D^{\dag}=-\frac{1}{2}\widetilde{L}+\frac{4}{n-1}ww^{t}.\] Then
\begin{equation}\label{E MPI dmwto pa}
\paren{D+aa^{t}}^{-1}a=\frac{1}{n-1}a,
\end{equation}
\begin{equation}\label{E MPI dmwto p}
D^{\dag}=\paren{D+aa^{t}}^{-1}\paren{I-\frac{1}{n-1}aa^{t}},
\end{equation}
\begin{equation}\label{E MPI dmwto Lmonio}
\widetilde{L}=-2\paren{\paren{D+aa^{t}}^{-1}-\frac{4}{n-1}ww^{t}}\paren{I-\frac{1}{n-1}aa^{t}}
\end{equation} and
$\paren{D+aa^{t}}^{-1}-\frac{4}{n-1}ww^{t}$ is negative semidefinite
on ${\rm R}\!\paren{D}$.
\end{proposition}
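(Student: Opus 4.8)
The plan is to derive the four identities in sequence, using only Proposition~\ref{P MPI dmwgo null space} and the defining relation (\ref{E MPI dmwto BBG}); each step is short because the special structure of $a$ and $w$ does the work. First I would prove (\ref{E MPI dmwto pa}). By Proposition~\ref{P MPI dmwgo null space} we have $a \in {\rm null}(D)$, so $Da=0$ and hence $\paren{D+aa^{t}}a = a\paren{a^{t}a}=\norm{a}^{2}a$. Since $n$ is odd, $a$ has first entry $0$ and exactly $n-1$ further entries each equal to $\pm 1$, so $\norm{a}^{2}=n-1$ and $\paren{D+aa^{t}}a=(n-1)a$; applying $\paren{D+aa^{t}}^{-1}$ yields (\ref{E MPI dmwto pa}).

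For (\ref{E MPI dmwto p}) I would start from $D^{\dag}=\paren{D+aa^{t}}^{-1}-\frac{1}{(n-1)^{2}}aa^{t}$ of Proposition~\ref{P MPI dmwgo null space}. Multiplying (\ref{E MPI dmwto pa}) on the right by $a^{t}$ gives $\paren{D+aa^{t}}^{-1}aa^{t}=\frac{1}{n-1}aa^{t}$, whence $\paren{D+aa^{t}}^{-1}\paren{I-\frac{1}{n-1}aa^{t}}=\paren{D+aa^{t}}^{-1}-\frac{1}{(n-1)^{2}}aa^{t}=D^{\dag}$, which is exactly (\ref{E MPI dmwto p}).

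The key observation for (\ref{E MPI dmwto Lmonio}) is that $w^{t}a=0$: the entries of $a$ in positions $2,\ldots,n$ sum to zero (evenly many $\pm 1$), $w$ is constant on those positions, and $a(1)=0$. Consequently $ww^{t}\paren{I-\frac{1}{n-1}aa^{t}}=ww^{t}$. Solving (\ref{E MPI dmwto BBG}) for $\widetilde{L}$ to get $\widetilde{L}=-2\paren{D^{\dag}-\frac{4}{n-1}ww^{t}}$, then substituting (\ref{E MPI dmwto p}) and inserting the harmless factor $\paren{I-\frac{1}{n-1}aa^{t}}$ into the $ww^{t}$ term via the last identity, I would obtain $\widetilde{L}=-2\paren{\paren{D+aa^{t}}^{-1}-\frac{4}{n-1}ww^{t}}\paren{I-\frac{1}{n-1}aa^{t}}$, which is (\ref{E MPI dmwto Lmonio}).

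Finally, to show $M:=\paren{D+aa^{t}}^{-1}-\frac{4}{n-1}ww^{t}$ is negative semidefinite on ${\rm R}\!\paren{D}$, I would note that, $D$ being symmetric with ${\rm null}(D)={\rm span}\set{a}$ and $\norm{a}^{2}=n-1$, the matrix $P:=I-\frac{1}{n-1}aa^{t}$ is precisely the orthogonal projection $P_{{\rm R}\!\paren{D}}$. Then (\ref{E MPI dmwto Lmonio}) reads $\widetilde{L}=-2MP$, so for any $x\in{\rm R}\!\paren{D}$ we have $Px=x$ and therefore $x^{t}\widetilde{L}x=-2\,x^{t}MPx=-2\,x^{t}Mx$. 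Since $\widetilde{L}$ is positive semidefinite by (\ref{E MPI dmwto BBG}), $x^{t}\widetilde{L}x\geq 0$, which forces $x^{t}Mx\leq 0$ for every $x\in{\rm R}\!\paren{D}$, as claimed. I expect this last step to be the only one demanding care: one must recognize $I-\frac{1}{n-1}aa^{t}$ as the orthogonal projector onto ${\rm R}\!\paren{D}$, so that restricting the (generally non-symmetric) product $MP$ to ${\rm R}\!\paren{D}$ collapses to the quadratic form of $M$ alone.
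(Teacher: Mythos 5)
Your proof is correct and takes essentially the same route as the paper's: (\ref{E MPI dmwto pa}) from $Da=0$ and $\norm{a}^{2}=n-1$, (\ref{E MPI dmwto p}) by combining (\ref{E MPI dmwto pa}) with Proposition~\ref{P MPI dmwgo null space}, (\ref{E MPI dmwto Lmonio}) via $w^{t}a=0$, and the semidefiniteness claim by recognizing $I-\frac{1}{n-1}aa^{t}$ as $P_{{\rm R}\!\paren{D}}$ and using the quadratic form of $\widetilde{L}$ (your restriction to $x\in{\rm R}\!\paren{D}$, where $Px=x$, is in fact a slight streamlining of the paper's computation). The only point to state more carefully is that positive semidefiniteness of $\widetilde{L}$ does not follow from the equation (\ref{E MPI dmwto BBG}) itself but is an external fact, cited in the paper as \cite[Theorem 2]{Balaji-Bapat-Goel (2020)}.
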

\begin{proof}
By the equality $\paren{D+aa^{t}}a=n-1a$, we get (\ref{E MPI dmwto
pa}). Equality (\ref{E MPI dmwto p}) is a consequence of (\ref{E MPI
dmwto pa}) and (\ref{E MPI dmwto D aat}). Using that $w^{t}a=0$,
from (\ref{E MPI dmwto BBG}) and (\ref{E MPI dmwto p}), (\ref{E MPI
dmwto Lmonio}) follows.

Since ${\rm N}\!\paren{D}={\rm span}\set{a}$, $P_{{\rm
R}\!\paren{D}}x=I-\frac{1}{n-1}aa^{t}$. By \cite[Theorem
2]{Balaji-Bapat-Goel (2020)}, $\widetilde{L}$ is positive
semidefinite. Hence, by (\ref{E MPI dmwto Lmonio}) and noting that
${\rm R}\!\paren{\widetilde{L}} \subseteq {\rm R}\!\paren{D}$,
\[x^{t}\widetilde{L}x=-2\paren{P_{{\rm
R}\!\paren{D}}x}^{t}\paren{\paren{D+aa^{t}}^{-1}-\frac{4}{n-1}ww^{t}}P_{{\rm
R}\!\paren{D}}x,\] and we conclude that
$\paren{D+aa^{t}}^{-1}-\frac{4}{n-1}ww^{t}$ is negative semidefinite
on ${\rm R}\!\paren{D}$.
\end{proof}

\bigskip
{\bf Acknowledgment.} The author thanks the reviewer for the
comments that improved the paper.



\begin{thebibliography}{1}

\bibitem{Moore (1920)}
E.H. Moore.
\newblock On the reciprocal of the general algebraic matrix.
\newblock \emph{Bull. Amer. Math. Soc.}, 26: 394--395, 1920.

\bibitem{Moore-Barnard (1935)}
E.H. Moore and R.W. Barnard.
\newblock \emph{General Analysis}.
\newblock Memoirs of the American Philosophical Society, I, American Philosophical Society,
Philadelphia, PA, 1935.

\bibitem{Bjerhammar (1951)}
A. Bjerhammar.
\newblock Rectangular reciprocal matrices, with special reference to geodetic calculations.
\newblock \emph{Bull. G\'{e}od\'{e}sique}, 20: 188--220, 1951.

\bibitem{Penrose (1955)}
R. Penrose.
\newblock A generalized inverse for matrices.
\newblock \emph{Math. Proc. Cambridge Philos. Soc.}, 51: 406--413, 1955.

\bibitem{Petryshyn (1967)}
W.V. Petryshyn.
\newblock On generalized inverses and on the uniform convergence of $(I-\beta K)^{n}$ with application to iterative methods.
\newblock \emph{J. Math. Anal. Appl.}, 18: 417--439, 1967.

\bibitem{Ding-Huang (1994)}
J. Ding and L.J. Huang.
\newblock On the perturbation of the least squares solutions in Hilbert spaces.
\newblock \emph{Linear Algebra Appl.}, 212: 487--500, 1994.

\bibitem{Ben-Israel-Greville (2003)}
A. Ben-Israel and T.N.E. Greville.
\newblock \emph{Generalized Inverses. Theory and Applications}.
\newblock 2nd edition. Springer-Verlag, New York, 2003.

\bibitem{Campbell-Meyer (2009)}
S.L. Campbell and C.D. Meyer.
\newblock \emph{Generalized Inverses of Linear Transformations}.
\newblock SIAM, Philadelphia, 2009.

\bibitem{Wang-Wei-Qiao (2018)}
G.R. Wang, Y. Wei, and S. Qiao.
\newblock \emph{Generalized Inverses: Theory and Computations}.
\newblock Springer Nature Singapore Pte Ltd., Singapore and Science Press, Beijing, China, 2018.

\bibitem{Baksalary-Trenkler (2021)}
O.M. Baksalary and G. Trenkler.
\newblock The Moore--Penrose inverse: a hundred years on a frontline of physics research.
\newblock \emph{The Eur. Phys. J. Histor. Perspect. Contemp. Phys.}, 46(1): 1--10, 2021.

\bibitem{Cline (1965)}
R.E. Cline.
\newblock Representations for the generalized inverse of sums of matrices.
\newblock \emph{SIAM J. Numer. Anal.}, 2(1): 99--114, 1965.

\bibitem{Hung-Markham (1977)}
C.-H. Hung and T. L. Markham.
\newblock The Moore--Penrose inverse of a sum of matrices.
\newblock \emph{J. Aust. Math. Soc.}, 24(4): 385--392, 1977.

\bibitem{Fill-Fishkind (1999)}
J.A. Fill and D.E. Fishkind.
\newblock The Moore--Penrose generalized inverse for sums of matrices.
\newblock \emph{SIAM J. Matrix Anal. Appl.}, 21: 629--635, 1999.

\bibitem{Arias-Corach-Maestripieri (1999)}
M. L. Arias, G. Corach and A. Maestripieri.
\newblock Range additivity, shorted operator and the Sherman-Morrison-Woodbury formula.
\newblock \emph{Linear Algebra Appl.}, 467: 86--99, 2015.

\bibitem{Sivakumar (2020)}
K.C. Sivakumar.
\newblock When $\paren{A+B}^{\dag}=A^{\dag}+B^{\dag}$?.
\newblock arXiv:2005.07309v1 [math.FA] 15 May 2020.

\bibitem{Baksalary-Sivakumar-Trenkler (2022)}
O.M. Baksalary, K.C. Sivakumar, and G. Trenkler.
\newblock On the Moore--Penrose inverse of a sum of matrices.
\newblock \emph{Linear Algebra Appl.}, 71:2, 133--149, 2023.

\bibitem{Tian (1998)}
Y. Tian.
\newblock The Moore--Penrose inverses of $m \times n$ block matrices and their applications.
\newblock \emph{Linear Algebra Appl.}, 283: 35--60, 1998.

\bibitem{Tian (2001)}
Y. Tian.
\newblock Some equalities for generalized inverses of matrix sums and block circulant matrices.
\newblock \emph{Arch. Math.}, 37: 301--306, 2001.

\bibitem{Tian (2005)}
Y. Tian.
\newblock The Moore--Penrose inverse for sums of matrices under rank additivity conditions.
\newblock \emph{Linear Algebra Appl.}, 53(1): 45--65, 2005.

\bibitem{Bell (1981)}
C.L. Bell.
\newblock Generalized inverses of circulant and generalized circulant matrices.
\newblock \emph{Linear Algebra Appl.}, 39: 133--142, 1981.

\bibitem{Plonka-Hoffmanny-Weickertz (2016)}
G. Plonka, S. Hoffmanny, and J. Weickertz.
\newblock Pseudo-inverses of difference matrices and their application to sparse
signal approximation.
\newblock \emph{Linear Algebra Appl.}, 503: 26--47, 2016.

\bibitem{Davis (1994)}
P.H. Davis.
\newblock \emph{Circulant Matrices}.
\newblock Chelsea Publishing, New York, 1994.

\bibitem{Aldrovandi (2001)}
R. Aldrovandi
\newblock \emph{Special Matrices of Mathematical Physics: Stochastic, Circulant and Bell Matrices}.
\newblock World Scientific Publishing Co. Pte. Ltd., Danvers, 2001.

\bibitem{Gray (2001)}
R.M. Gray.
\newblock \emph{Toeplitz and Circulant Matrices: A Review}.
\newblock Technical Report. Standford University, available electronically at https://ee.stanford.edu/$\sim$gray/toeplitz.pdf (2001).

\bibitem{Searle (1979)}
S.R. Searle.
\newblock On inverting circulant matrices
\newblock \emph{Linear Algebra Appl.}, 25: 77--89, 1979.

\bibitem{Fuyong (2011)}
L. Fuyong.
\newblock The inverse of circulant matrix.
\newblock \emph{Appl. Math. Comput.}, 217(21): 8495--8503, 2011.

\bibitem{Carmona-Encinas-Gago-Jimenez-Mitjana (2015)}
A. Carmona, A.M. Encinas, S. Gago, M.J. Jim\'{e}nez, and M. Mitjana.
\newblock The inverses of some circulant matrices.
\newblock \emph{Appl. Math. Comput.}, 270: 785--793, 2015.

\bibitem{Bustomi-Barra (2017)}
Bustomi and A. Barra.
\newblock Invertibility of some circulant matrices.
\newblock \emph{Inst. Phys. Conf. Ser.}, 893:012012, 2017.

\bibitem{Carmona-Encinas-Jimenez-Mitjana (2021)}
A. Carmona, A.M. Encinas, M.J. Jim\'{e}nez, and M. Mitjana.
\newblock The group inverse of some circulant matrices.
\newblock \emph{Linear Algebra Appl.}, 614: 415-436, 2021.

\bibitem{Carmona-Encinas-Jimenez-Mitjana (2022)}
A. Carmona, A.M. Encinas, M.J. Jim\'{e}nez, and M. Mitjana.
\newblock The group inverse of circulant matrices depending on four
parameters.
\newblock \emph{Spec. Matrices}, 10:87--108, 2022.

\bibitem{Bapat (2018)}
R.B. Bapat.
\newblock \emph{Graphs and Matrices}.
\newblock 2nd Edition, Hindustan Book Agency, New Delhi, 2018.

\bibitem{Fiedler (2011)}
M. Fiedler.
\newblock \emph{Matrices and Graphs in Geometry}.
\newblock Encyclopedia of Mathematics and its Applications, Cambridge University Press, 2011.

\bibitem{Graham-Lovasz (1978)}
R.L. Graham and L. Lov\'{a}sz.
\newblock Distance matrix polynomials of trees.
\newblock \emph{Adv. Math.}, 29(1): 60--88, 1978.

\bibitem{Bapat-Kirkland-Neumann (2005)}
R.B. Bapat, S. Kirkland, and M. Neumann.
\newblock On distance matrices and Laplacians.
\newblock \emph{Linear Algebra Appl.}, 401: 193--209, 2005.

\bibitem{Kurata-Bapat (2016)}
H. Kurata and R.B. Bapat.
\newblock  Moore--Penrose inverse of a hollow symmetric matrix and a predistance matrix.
\newblock \emph{Spec. Matrices}, 4(1): 270--282, 2016.

\bibitem{Balaji-Bapat-Goel (2020)}
R. Balaji, R.B. Bapat, and S. Goel.
\newblock On distance matrices of wheel graphs with odd number of vertices.
\newblock \emph{Linear Algebra Appl.}, 1--32, 2020.

\bibitem{Balaji-Bapat-Goel (2021)}
R. Balaji, R.B. Bapat, and S. Goel.
\newblock An inverse formula for the distance matrix of a wheel graph with even number of
vertices.
\newblock \emph{Linear Algebra Appl.}, 610: 274--292,
2021.

\bibitem{Kurata-Bapat (2015)}
H. Kurata and R.B. Bapat.
\newblock Moore--Penrose inverse of a Euclidean distance matrix.
\newblock \emph{Linear Algebra Appl.}, 472: 106--117, 2015.

\bibitem{Ji (2005)}
J. Ji.
\newblock Explicit expressions for the Moore--Penrose inverse and condensed Cramer rules.
\newblock \emph{Linear Algebra Appl.} 404: 183--192, 2005.

\bibitem{Horn-Johnson (2013)}
R.A. Horn and C.R. Johnson.
\newblock \emph{Matrix Analysis}.
\newblock 2nd edition. Cambridge University Press, New York, 2013.

\bibitem{Boman-Uhlig (2002)}
E. Boman and F. Uhlig.
\newblock When is $\paren{a+b}^{-1} = a^{-1} + b^{-1}$ anyway?.
\newblock \emph{Coll. Math. J.}, 33(4): 296--300, 2002.

\bibitem{DAngelo (2012)}
J.P. D'Angelo.
\newblock When is the sum of inverses the inverse of the sum?
\newblock https://faculty.math.illinois.edu/$\sim$jpda/jpd-gardner-2012-web.pdf.

\end{thebibliography}
\end{document}